\numberwithin{equation}{section}
\newcommand{\R}{\mathbb R}
\newcommand{\C}{\mathbb C}
\newcommand{\N}{\mathbb N}
\newcommand{\ball}{\mathrm{B}}
\newcommand{\A}{\mathscr{A}}
\newcommand{\Aop}{\mathbb{A}}
\newcommand{\Leb}{\mathscr{L}}
\newcommand{\Hd}{\mathscr{H}}
\newcommand{\lebe}{\operatorname{L}}
\newcommand{\sobo}{\operatorname{W}}
\newcommand{\cont}{\operatorname{C}}
\newcommand{\hold}{\operatorname{C}}
\newcommand{\BV}{\mathrm{BV}}
\newcommand{\loc}{\mathrm{loc}}
\newcommand{\D}{\mathrm{D}}
\DeclareMathOperator{\tr}{tr}
\DeclareMathOperator{\Lin}{Lin}
\newcommand{\dev}{\operatorname{dev}}
\newcommand{\sym}{\operatorname{sym}}
\newcommand{\intd}{\,\mathrm{d}}
\newcommand{\dx}[1]{\,\mathrm{d}#1}
\newcommand{\eps}{\varepsilon}
\newcommand{\e}{\mathbf{e}}
\newcommand{\del}{\partial}
\newcommand{\norm}[1]{\left\lVert#1\right\rVert}
\newcommand{\skalarProd}[2]{\big\langle#1,#2\big\rangle}
\newcommand{\abs}[1]{\left\lvert#1\right\rvert}
\newcommand{\F}{\mathscr{F}}
\newcommand{\bbone}{\text{\usefont{U}{bbold}{m}{n}1}}
\newcommand{\weakstar}{\overset{\ast}{\rightharpoonup}}
\newcommand{\Proj}{\mathbb{P}}
\newcommand{\diam}{\mathrm{diam}}
\newcommand{\rfrak}{\mathfrak{r}}
\renewcommand{\L}{\mathrm{L}}
\def\Xint#1{\mathchoice
	{\XXint\displaystyle\textstyle{#1}}%
	{\XXint\textstyle\scriptstyle{#1}}%
	{\XXint\scriptstyle\scriptscriptstyle{#1}}%
	{\XXint\scriptscriptstyle\scriptscriptstyle{#1}}%
	\!\int}
\def\XXint#1#2#3{{\setbox0=\hbox{$#1{#2#3}{\int}$ }
		\vcenter{\hbox{$#2#3$ }}\kern-.6\wd0}}
\def\dashint{\Xint-}
\colorlet{RED}{red}
\title{On $\BV^\Aop$-minimisers in two dimensions}
\newtheorem{thm}{Theorem}[section]
\newtheorem{cor}[thm]{Corollary}
\newtheorem{lem}[thm]{Lemma}
\newtheorem{prop}[thm]{Proposition}
\newtheorem{defi}[thm]{Definition}
\newtheorem{rem}[thm]{Remark}
\author[F. Eitler]{Ferdinand Eitler}
\address[Ferdinand Eitler]{University of Augsburg, Universitätsstr. 12, 86159 Augsburg, Germany}
\email{ferdinand.eitler@math.uni-augsburg.de}
\author[P. Lewintan]{Peter Lewintan}
\address[Peter Lewintan]{Institute for Analysis, Karlsruhe Institute of Technology, Englerstr. 2, 76131 Karlsruhe, Germany}
\email{peter.lewintan@kit.edu}
\subjclass{35B65, 35J60, 49J45, 35A15}
\keywords{$\BV^\Aop$-minimisers, $\C$-elliptic differential operators, regularity}
\begin{document}
\begin{abstract}
We investigate into the regularity of $\BV^\Aop$-minimisers for $\C$-elliptic differential operators $\Aop$ in $2$ dimensions. Our studies strongly rely on the special structure of such differential operators. The gradient integrability is established for the sharp ellipticity range known from the (symmetric) gradient case.

\end{abstract}

\maketitle
\section{Introduction}

\subsection{Overview of the problem.} Let $\Omega \subset \R^n$, $n\geq 2$ be an open and bounded Lipschitz domain with boundary $\del\Omega$. The goal of this paper is to give a contribution to the regularity theory for relaxed minimisers of 
\begin{align*}
	\F[u;\Omega] = \int_\Omega f(\Aop u)\dx{x}, \quad u\colon \R^n \to V,
\end{align*}
 where $\Aop$ is a $\ell$-th order, linear, constant-coefficient, homogeneous differential operator on $\R^n$ between two real finite dimensional vector spaces $V$ and $W$, i.e.
\begin{align}\label{eq:DifferentialOperator}
\Aop u\coloneqq \sum_{\abs{\alpha}=\ell}\Aop_{\alpha}\partial^\alpha u, \quad u\colon\R^n\to V,
\end{align}
with linear maps $\Aop_{\alpha}:V\to W$ and multi-indices $\alpha\in \N_0^n$. Moreover,  $f\colon W\to\R$ is an integrand which will be specified later. In case of the full or symmetric gradient, i.e.~$\Aop u =\D u$ or $\Aop u=\sym(\D u)\coloneqq\frac{1}{2}(\D u+\D u^\top)$ respectively, such functionals appear in the continuum description for modelling elastic and plastic effects for solids and fluids, where $u$ is then the corresponding displacement or velocity field, cf.~\cite{FuchsSeregin}. Usually, plasticity is described by means of linear growth densities $f\colon W\to \R$,  meaning that there exist two constants $c_1,c_2>0$ such that
\begin{align}\label{eq:LinGrowth}
c_1\abs{P}\le f(P) \le c_2 (1+\abs{P})\quad\mbox{for all}\quad P\in W.
\end{align}
This growth condition guarantees that $\F[-;\Omega]$ is well-defined on the space 
\begin{align*}
	\sobo^{\Aop,1}(\Omega) &\coloneqq \{v\in \lebe^1(\Omega;V): \Aop v\in \lebe^1(\Omega;W)\},
\end{align*}
being the natural generalisation of $\sobo^{1,1}(\Omega;\R^n)$. We can equip $\sobo^{\Aop,1}(\Omega)$ with the canonical norm $\|u\|_{\sobo^{\Aop,1}(\Omega)}\coloneqq \|u\|_{\lebe^1(\Omega;V)} + \|\Aop u\|_{\lebe^1(\Omega;W)}$, turning it into a Banach space.  Of course there holds $\sobo^{1,1}(\Omega;\R^N)= \sobo^{\D,1}(\Omega)$ with $V=\R^N$ and $W=\R^{N\times n}$ in the full gradient case. As the space $\sobo^{\Aop,1}(\Omega)$ is not reflexive, we cannot expect to have existence of minimisers using the direct method of the Calculus of Variations. Thus, a relaxation to the space $\BV^\Aop(\Omega)$ of functions of bounded $\Aop$-variation is necessary to ensure existence of minimisers. The latter function space is defined by
\begin{equation*}
	\BV^\Aop (\Omega) \coloneqq \{v\in\lebe^1(\Omega; V): \Aop v \in \mathrm{RM}_{\rm fin}(\Omega;W)\},
\end{equation*}
where $\mathrm{RM}_{\rm fin}(\Omega;W)$ denotes the space of $W$-valued finite Radon measures, cf.~\cite{BDG}. 
 Another drawback compared to the superlinear growth case $1<p<\infty$ is, that there exists \textbf{no} constant $c=c(\Aop)>0$ such that  
\begin{align*}
	\|\D u\|_{\lebe^1(\Omega;\Lin(\R^n;V))}\leq c\|\Aop u\|_{\lebe^1(\Omega;W)}
\end{align*}
holds for all $u\in\hold^\infty_c(\Omega;V)$ which is known under the name \emph{Ornstein-type non-inequality} in the literature, cf.~\cite[Thm. 1.3]{KirchheimKristensen}. Therefore, we have the strict inclusions $\sobo^{1,1}(\Omega;V) \subsetneq \sobo^{\Aop,1}(\Omega)$ and $\BV(\Omega;V)\subsetneq \BV^\Aop(\Omega)$, where $\BV(\Omega;V) = \BV^\D(\Omega)$ is the usual space of functions of bounded variation. Since the full distributional derivatives of $\BV^\Aop$-functions do not belong to $\mathrm{RM}_{\rm fin}(\Omega;\Lin(\R^n;V))$, we cannot apply known results for linear growth functionals involving the full gradient. Hence, we are interested in the question under which conditions on the integrand $f$ we have $\BV_\loc(\Omega;V)$ or even $\sobo^{1,1}_\loc(\Omega;V)$ regularity for relaxed minimisers. In particular, we are seeking for a parallel regularity theory to what is known in the full or symmetric gradient case for linear growth functionals. To state our main result we next introduce the precise framework. 
\subsection{Existence results for relaxed minimisers} Throughout this paper we assume that the Dirichlet datum satisfies $u_0\in\sobo^{\Aop,1}(\Omega)$. As usual, we define $\sobo^{\Aop,1}_0(\Omega)$ to be the closure of the test-functions $\hold^\infty_c(\Omega;V)$ with respect to the $\|\cdot\|_{\sobo^{\Aop,1}(\Omega)}$-norm. Apparently, a crucial assumption for the differential operator $\Aop$ of the form \eqref{eq:DifferentialOperator} is the notion of $\C$-ellipticity. Namely, if we associate the corresponding Fourier symbol map
\begin{equation*}
	\Aop[\xi]=\sum_{\abs{\alpha}=\ell}\xi^\alpha\Aop_\alpha: V\to W, \quad \xi\in\R^n
\end{equation*}
to $\Aop$, we call the operator $\Aop$ \emph{elliptic} if
\begin{equation*}
	\forall \xi\in\R^n\backslash\{0\}:\qquad \Aop[\xi]:V\to W \text{ is injective},
\end{equation*}
and $\C$-\emph{elliptic} if
\begin{equation*}
	\forall \xi\in\C^n\backslash\{0\}:\qquad \Aop[\xi]:V+\mathrm{i}V\to W+\mathrm{i}W \text{ is injective}.
\end{equation*}
It turns out that $\C$-ellipticity is equivalent to the fact of having a trace operator, cf.~\cite{BDG} as well as (compact) embeddings, cf.~\cite{GmRa1}. Moreover, since $\C$-ellipticity is equivalent to having a finite dimensional null-space, cf.~\cite[Thm. 2.6]{BDG} it follows from \cite{GmRa2,Raita} that $u\in\BV^\Aop$ is also $\lebe^p$-differentiable, which will be important to have the decomposition \eqref{eq:LebesgueRadonNikodym_Au} below. The following characterisation of $\C$-elliptic differential operators is essentially due to \textsc{Smith} \cite{Smith} but we also refer to \textsc{Ka\l{}amajska} \cite{Kalamajska} and \textsc{Gmeineder} et al. \cite[Prop. 3.2]{GRV}:
\begin{lem}\label{lem:Cellipt}
	$\Aop$ is $\C$-elliptic if and only if there exists another linear, constant-coefficient, homogeneous differential operator $\mathbb{L}$ on $\R^n$ and a number $d\in\N$ such that $\D^d=\mathbb{L}\circ\Aop$.
\end{lem}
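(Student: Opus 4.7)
The plan is to recast the lemma, via the Fourier symbol, as a purely algebraic statement about polynomial matrices. Setting $N\coloneqq\dim V$, the symbol $\Aop[\xi]$ becomes a $(\dim W)\times N$ matrix whose entries are homogeneous polynomials of degree $\ell$ in $\xi\in\C^n$, and an operator identity $\D^d=\mathbb{L}\circ\Aop$ is equivalent to the polynomial identity $\mathbb{L}[\xi]\,\Aop[\xi]=S_d[\xi]$, where $S_d[\xi]$ is the block matrix stacking the maps $\xi^\alpha\,\mathrm{id}_V$ for $\abs{\alpha}=d$. As the monomials $\xi^\alpha$ are linearly independent, the task reduces to producing, for every such $\alpha$, a polynomial left-parametrix $L^\alpha[\xi]\colon W\to V$ with $L^\alpha[\xi]\,\Aop[\xi]=\xi^\alpha\,\mathrm{id}_V$.

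The easy direction ($\Leftarrow$) is immediate from this reformulation. If $\mathbb{L}[\xi]\,\Aop[\xi]=S_d[\xi]$ and $\Aop[\xi]v=0$ for some $\xi\in\C^n\setminus\{0\}$ and $v\in V+\mathrm{i}V$, then $\xi^\alpha v=0$ for every $\abs{\alpha}=d$. Choosing a coordinate $j$ with $\xi_j\ne 0$ and $\alpha=d\e_j$ forces $v=0$, so $\Aop[\xi]$ is injective on $\C^n\setminus\{0\}$ and $\Aop$ is $\C$-elliptic.

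For the forward direction, $\C$-ellipticity means that at every $\xi\in\C^n\setminus\{0\}$ at least one $N\times N$ minor of $\Aop[\xi]$ is nonzero. After fixing bases of $V,W$, let $J\subset\C[\xi_1,\dots,\xi_n]$ be the homogeneous ideal generated by these maximal minors $\det M_I(\xi)$, each of degree $N\ell$, where $I\subset\{1,\dots,\dim W\}$ ranges over the subsets of size $N$. Then $V_\C(J)=\{0\}$, so Hilbert's Nullstellensatz yields an exponent $r$ with $(\xi_1,\dots,\xi_n)^r\subset J$. Choosing any $d\ge\max\{r,N\ell\}$ and passing to the homogeneous component of degree $d$ in each arising identity, every monomial $\xi^\alpha$ with $\abs{\alpha}=d$ admits a representation $\xi^\alpha=\sum_I p_{I,\alpha}(\xi)\,\det M_I(\xi)$ with $p_{I,\alpha}$ homogeneous of degree $d-N\ell$.

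Cramer's rule now supplies the pointwise polynomial identity $\mathrm{adj}(M_I(\xi))\circ\pi_I\circ\Aop[\xi]=\det M_I(\xi)\,\mathrm{id}_V$, where $\pi_I\colon W\to\C^N$ projects onto the rows indexed by $I$. Combining with the Nullstellensatz decomposition, the ansatz
\begin{equation*}
L^\alpha[\xi]\coloneqq\sum_I p_{I,\alpha}(\xi)\,\mathrm{adj}(M_I(\xi))\,\pi_I
\end{equation*}
satisfies $L^\alpha[\xi]\,\Aop[\xi]=\xi^\alpha\,\mathrm{id}_V$ and is homogeneous of degree $d-\ell$. Assembling the $L^\alpha$ into a single polynomial matrix $\mathbb{L}[\xi]$ and inverse Fourier transforming yields a linear, constant-coefficient, homogeneous operator $\mathbb{L}$ of order $d-\ell$ with $\mathbb{L}\circ\Aop=\D^d$. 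I expect the main obstacle to lie precisely in the Nullstellensatz step and its homogeneity bookkeeping: one must verify that the decompositions of the $\xi^\alpha$ modulo $J$ can really be taken homogeneous (which works because $J$ is generated in the single degree $N\ell$, allowing one to retain only the degree-$(d-N\ell)$ part of each $p_{I,\alpha}$), and that $d$ is large enough to absorb both the Nullstellensatz exponent $r$ and the degree $N\ell$ of the adjugates. Once this is arranged, the remainder is linear algebra.
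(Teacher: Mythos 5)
Your proposal is correct. Note first that the paper does not prove \Cref{lem:Cellipt}; it is stated as a known characterisation with the proof delegated to Smith, Ka\l{}amajska, and Gmeineder--Rai\c{t}\u{a}--Van Schaftingen, so there is no in-paper argument to compare against. Your route---passing to the symbol, observing that $\C$-ellipticity is equivalent to the $N\times N$ minors $\det M_I(\xi)$ of $\Aop[\xi]$ having no common zero in $\C^n\setminus\{0\}$, invoking the Nullstellensatz to place $(\xi_1,\dots,\xi_n)^r$ inside the ideal $J$ of minors, and then assembling a left inverse via Cramer's rule---is precisely the standard argument for this equivalence and the one carried out in the cited references. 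Your homogeneity bookkeeping is also sound: $J$ is homogeneous and generated in the single degree $N\ell$, so extracting the degree-$d$ component of each Nullstellensatz identity leaves homogeneous multipliers $p_{I,\alpha}$ of degree $d-N\ell\geq 0$, and the adjugate contributes degree $(N-1)\ell$, giving $\mathbb{L}$ homogeneous of degree $d-\ell$ as required. The one detail worth spelling out is reality of coefficients: the Nullstellensatz supplies $q_{I,\alpha}\in\C[\xi_1,\dots,\xi_n]$, whereas $\mathbb{L}$ must have real coefficient maps $W\to V$; since $\xi^\alpha$ and the minors $\det M_I$ already have real coefficients, replacing $q_{I,\alpha}$ by its real part preserves the identity and yields real $p_{I,\alpha}$, hence a real polynomial matrix $\mathbb{L}[\xi]$. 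With that remark added, the proof is complete and matches the approach of the literature the paper cites.
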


In the following, we focus on the case $\ell=1$ and $n=2$ and consider first-order $\C$-elliptic differential operators $\Aop$ on $\R^2$ that are induced by an (orthogonal) projection $\A\colon\R^{2\times 2}\to\R^{2\times2}$ (like $\A\in\{\sym,\dev\}$ denoting the symmetric and deviatoric part map), i.e. the induced differential operator on $\R^2$ from $\R^2$ to $\R^{2\times2}$ is given via $\Aop\coloneqq \A[\cdot\otimes \nabla]$. In this setting, these operators possess a certain structure and we have by \cite[Prop 4.1]{GLN1} the following decomposition
\begin{align}\label{eq:complementarypart}
	P = \mathfrak{L}(\A[P]) + \gamma(P) \mathfrak{G}\quad\mbox{for all}\quad P\in\R^{2\times 2}, 
\end{align}
where $\mathfrak{L}\colon\R^{2\times 2}\to \R^{2\times 2}$ and $\gamma\colon\R^{2\times2}\to \R$ are linear maps, and $\mathfrak{G}\in\mathrm{GL}(2)$. Furthermore, the integer $d$ from Lemma \ref{lem:Cellipt} can be specified:
\begin{lem}\label{lem:Cellipt2d}
	Let $\Aop\coloneqq\A[\cdot\otimes\nabla]$ be a first-order $\C$-elliptic differential operator on $\R^2$ induced by a projection $\A:\R^{2\times2}\to\R^{2\times2}$. Then there exists a first-order linear constant-coefficient homogeneous differential operator $\mathbb{L}$ such that $\D^2=\mathbb{L}\circ\Aop$.
\end{lem}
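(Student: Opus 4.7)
My plan is to differentiate the pointwise decomposition \eqref{eq:complementarypart} and exploit the curl-freeness of gradients, together with $\mathfrak{G}\in\mathrm{GL}(2)$, to express $\D^2 u$ as a first-order constant-coefficient operator applied to $\Aop u$. Evaluating \eqref{eq:complementarypart} at $P=\D u(x)$ yields the field identity
\begin{equation*}
\D u = \mathfrak{L}(\Aop u) + g\,\mathfrak{G},\qquad g := \gamma(\D u),
\end{equation*}
in which the first summand is already a pointwise linear rearrangement of $\Aop u$. After one further derivative, the whole task reduces to showing that $\nabla g$ can be written as a first-order linear constant-coefficient expression in $\Aop u$.

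To that end, I would apply the row-wise curl $(\Curl Q)_i := \partial_1 Q_{i2}-\partial_2 Q_{i1}$ to both sides of the decomposition. Since $\D u$ is a gradient, $\Curl(\D u)=0$, while a direct Leibniz computation gives $\Curl(g\,\mathfrak{G}) = \mathfrak{G}_J\,\nabla g$, where $\mathfrak{G}_J$ denotes the matrix whose columns are $\mathfrak{G}^{(2)}$ and $-\mathfrak{G}^{(1)}$ (the columns of $\mathfrak{G}$). Crucially, $\det\mathfrak{G}_J = \det\mathfrak{G}\neq 0$ because $\mathfrak{G}\in\mathrm{GL}(2)$, so $\mathfrak{G}_J$ is invertible and the curl identity can be solved for
\begin{equation*}
\nabla g \,=\, -\mathfrak{G}_J^{-1}\,\Curl\bigl(\mathfrak{L}(\Aop u)\bigr),
\end{equation*}
a first-order linear constant-coefficient expression in $\Aop u$.

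Combining this with the componentwise identity $\partial_k\partial_j u_i = \partial_k(\mathfrak{L}(\Aop u))_{ij}+(\partial_k g)\,\mathfrak{G}_{ij}$ exhibits every second derivative of $u$ as the value of a first-order linear constant-coefficient operator applied to $\Aop u$; reading off the coefficients componentwise produces the sought operator $\mathbb{L}$ with $\D^2=\mathbb{L}\circ\Aop$. The only non-routine ingredient is the invertibility of $\mathfrak{G}_J$, and I expect no genuine obstacle beyond careful bookkeeping, since this invertibility is guaranteed structurally by $\mathfrak{G}\in\mathrm{GL}(2)$ in \eqref{eq:complementarypart}, which in turn reflects the $\C$-ellipticity of $\Aop$.
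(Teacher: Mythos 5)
Your proof is correct, and it takes a genuinely different route from the paper. The paper's proof is an explicit coordinate computation split by the two possible values of $\dim\A[\R^{2\times2}]$: when $\dim\A[\R^{2\times2}]=4$ all entries of $\D u$ are already linear combinations of entries of $\mathfrak{L}(\A[\D u])$, and when $\dim\A[\R^{2\times2}]=3$ the authors write out the six second derivatives as a $6\times6$ linear system in the derivatives of the entries of $\mathfrak{L}(\A[\D u])$ and verify that the determinant equals $-(a_{11}a_{22}+a_{12})\neq 0$. Your argument replaces this case analysis and determinant check with a single structural observation: the scalar complementary component $g=\gamma(\D u)$ has its gradient pinned down by the Schwarz/curl-free constraint $\Curl(\D u)=0$, and solving $\mathfrak{G}_J\nabla g = -\Curl(\mathfrak{L}(\Aop u))$ is possible precisely because $\det\mathfrak{G}_J=\det\mathfrak{G}\neq 0$. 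The two proofs hinge on the same underlying fact (the paper's condition $a_{11}a_{22}+a_{12}\neq 0$ is exactly what forces $\mathfrak{G}\in\mathrm{GL}(2)$ in \cite[Prop.\ 4.1]{GLN1}), but yours isolates it cleanly without expanding in coordinates or distinguishing cases, and makes transparent why only the invertibility of the one-dimensional complementary direction is needed. One small point worth recording when writing it up: the argument needs $v_j\in\sobo^{2,2}_\loc$ or a distributional reading so that $\Curl(\D u)=0$ holds, but that is standard and matches how \Cref{lem:Cellipt2d} is used in the paper.
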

The proof of this lemma will be given below in Subsection \ref{sec:Korn_in_Orlicz}. From now on, we consider the variational principle
\begin{align*}
	\mbox{to minimise}\quad \mathscr{F}[u;\Omega ]\coloneqq \int_\Omega f(\Aop u) \dx{x} \quad\mbox{over} \quad \mathscr{D}_{u_0} \coloneqq u_0 + \sobo^{\Aop,1}_0(\Omega),
\end{align*}
where $f\colon W\to \R$ is a convex integrand satisfying the linear growth condition \eqref{eq:LinGrowth} and $\Aop=\A[\cdot\otimes\nabla]$ is a $\C$-elliptic differential operator induced by a projection $\A$ as above. Recalling the discussion in  \cite[Sec. 5]{BDG}, we notice that it is not too restrictive to assume $W=\R^{2\times 2}_\A$, where 
\begin{align*}
	\R^{2\times 2}_{\A}\coloneqq\{P\in\R^{2\times 2}:\A[P]=P\}=\mathrm{span}\{v\otimes_{\Aop} \xi\coloneqq \A[v\otimes \xi], v,\xi\in\R^2\}. 
\end{align*}
To overcome the missing weak compactness in $\sobo^{\Aop,1}(\Omega)$, we relax the functional $\F$ to the space $\BV^\Aop(\Omega)$ by setting
\begin{align}\label{eq:Relaxation}
	\overline{\F}_{u_0}[v;\Omega] = \int_\Omega f(\A[\D v])\dx{x} 
	&+ \int_\Omega f^\infty\left(\frac{\dx{\Aop^sv}}{\dx{\abs{\Aop^sv}}}\right)\dx{\abs{\Aop^sv}}\notag \\
	&+\int_{\del\Omega} f^\infty (\tr(u_0-v)\otimes_{\Aop}\nu_{\del\Omega})\dx{\Hd^{n-1}},
\end{align}
where $\nu_{\partial\Omega}$ denotes the unit outward normal field along $\partial\Omega$ and the behaviour of the integrand $f$ at infinity is encoded in the \emph{recession function} defined by
\begin{align}\label{eq:RecessionFunction}
	f^\infty(z) \coloneqq \lim_{s\to\infty} \frac{f(sz)}{s}\quad\mbox{for}\quad z\in \R^{2\times2}_\A.
\end{align}
Moreover, we have used the Lebesgue-Radon-Nikod\'ym decomposition which allows to split $\Aop u$ into 
\begin{equation}\label{eq:LebesgueRadonNikodym_Au}
	\Aop u = \Aop^a u + \Aop^s u = \frac{\dx{\Aop^a u}}{\dx{\Leb^n}}\Leb^n + \frac{\dx{\Aop^s u}}{\dx{\abs{\Aop^s u}}}\abs{\Aop^s u} = \A[\D u] \Leb^n + \frac{\dx{\Aop^s u}}{\dx{\abs{\Aop^s u}}}\abs{\Aop^s u}, 
\end{equation}
 where the density of $\Aop^s u$ with respect to $\Leb^n$ can be identified with $\A[\D u]$, which is the part map of the approximate gradient $\D u$. 
For the precise definition of all building blocks of the above formula we refer to \Cref{sec:Preliminaries}. Next we introduce the notion of minimality, which we will use in the sequel:
\begin{defi}[$\BV^\Aop$- and local $\BV^{\Aop}$-minimisers]\label{defi:minimiser} ~
	\begin{enumerate}
		\item Given $u_0\in \BV^{\Aop}(\Omega)$, a map $u\in \BV^\Aop(\Omega)$ is called \emph{$\BV^\Aop$-minimiser with respect to the Dirichlet datum $u_0$} if we have 
		$\overline{\F}_{u_0}[u;\Omega] \leq \overline{\F}_{u_0}[w;\Omega]$ for all $w\in \BV^{\Aop}(\Omega)$. 
		\item A map $u\in \BV^\Aop_\loc(\Omega)$ is called \emph{local $\BV^\Aop$-minimiser} if we have $\overline{\F}_u[u; U] \leq \overline{\F}_u[w;U]$ for all $U\Subset \Omega$ with Lipschitz boundary $\del U$ and all $w\in \BV^{\Aop}(U)$.
	\end{enumerate}
\end{defi}
We note, that it follows from the definition, that any $\BV^\Aop$-minimiser is also a local $\BV^\Aop$-minimiser. Under our assumptions, it is shown in \cite[Thm. 5.3]{Gmeineder} or \cite[Thm. 3.3]{Piotr} that there exists a $\BV^\Aop$-minimiser $u$, which is the weak${}^\ast$-limit of a minimising sequence in $\mathscr{D}_{u_0}$ without having a relaxation gap, i.e.
\begin{equation}\label{eq:ConsistencyRelation}
\inf_{\mathscr{D}_{u_0}} \F[-; \Omega] = \min_{\BV^{\Aop}(\Omega)}\overline{\F}_{u_0}[-;\Omega] = \overline{\F}_{u_0}[u;\Omega]. 
\end{equation}
Moreover, since the recession function is positively $1$-homogeneous and hence, not strictly convex, it turns out, that the whole functional $\overline{\F}_{u_0}[-;\Omega]$ is not strictly convex on $\BV^\Aop_{(\loc)}(\Omega)$. As a consequence, (local) $\BV^\Aop$-minimiser may be non-unique in general. 

In order to conclude Sobolev regularity we have to impose a suitable ellipticity condition on $f$, namely we consider the notion of $\mu$-ellipticity described in the upcoming subsection.
\subsection{The scale of $\mu$-elliptic variational integrands and previous results.} In the context of convex linear growth integrands it is common to consider $\mu$-elliptic variational integrands. More precisely, given $1<\mu<\infty$,  we say that some $\hold^2$-integrand $f\colon\R^{2\times2}_\A\to \R$ is \emph{$\mu$-elliptic} if there are constants $0<\lambda \leq \Lambda<\infty$ such that
\begin{align}\label{eq:MuEllipticity}
	\lambda \,\frac{\abs{\xi}^2}{(1+\abs{P}^2)^\frac{\mu}{2}}\leq \langle \D^2f(P)\xi,\xi\rangle\leq\Lambda\,\frac{\abs{\xi}^2}{(1+\abs{P}^2)} \quad\mbox{for all}\quad P,\xi\in\R^{2\times 2}_{\A}.
\end{align}
We briefly notice, that the condition $\mu>1$ is necessary, since otherwise the integrand is no longer of linear growth, cf.~\cite[Rem. 4.14]{BEG} for more details. In case of the full gradient, i.e.~the framework of $\BV$-spaces, \textsc{Bildhauer} proved in all dimensions that at least one $\BV$-minimiser is of class $\sobo^{1,p}_\loc(\Omega;\R^N)\cap \sobo^{1,1}(\Omega;\R^N)$ for $p>1$ if $1<\mu +\frac{2}{n}$ . The $\sobo^{1,1}$-regularity remains true for $1+\frac{2}{n}\leq \mu\leq 3$ under the additional assumption of local boundedness, which can be justified by means of maximum principles or Moser-type iterations. We refer to \cite{Bildhauer} for the precise statements and more details. In \cite{BeckSchmidt}, \textsc{Beck} and \textsc{Schmidt} extended the above Sobolev regularity result to \emph{all} $\BV$-minimisers using the Ekeland variational principle, recalling that relaxed minimisers may be non-unique. We briefly indicate that so far there is no result for the autonomous Dirichlet problem available dealing with the case $\mu>3$. However, we mention \cite{Neumann} for a result concerning the Neumann problem as well as \cite{BildhauerLecNotes} for a counter-example for non-autonomous variational integrals in case $\mu>3$. Let us also mention \cite{Bildhauer2d} dealing with two-dimensional radially symmetric variational integrals as well as \cite{Lukas_Paul_etc} for a result concerning non-autonomous integrands in the full gradient case. Finally, we note that $\mu$-elliptic integrands show a similar behaviour to one of $(p,q)$-growth and we refer to the discussion in \cite{BEG} and \cite{DarkGuide} for a detailed overview including many additional references.

The first contribution in direction of the symmetric gradient can be found in \cite{GK}, proving higher integrability and Sobolev regularity for a non-optimal range $1<\mu<1+\frac{1}{n}$ by means of fractional methods. The same result was later extended by \textsc{Gmeineder} in \cite{Gmeineder} to the range $1<\mu<1+\frac{2}{n}$ known from the full gradient case, using techniques which heavily exploit the specific structure of the symmetric gradient. Finally, assuming additional local boundedness for the minimiser, it is shown in \cite{BEG} that $\sobo^{1,1}$-regularity holds for the full range $1<\mu\leq 3$, hence giving a picture to that in the $\BV$-case.  Again, the specific structure of the symmetric gradient plays a key role in the proof.

 So far the only result treating more general $\C$-elliptic operators was given by \textsc{Wozniak} in \cite{Piotr}, generalising the higher integrability and Sobolev regularity for the non-optimal range $1<\mu<1+\frac{1}{n}$, exploiting that the fractional techniques are more flexible. At the present stage it seems hopeless to transfer the methods from \cite{Gmeineder} and \cite{BEG} to the general $\C$-elliptic case, since the structure of the differential operator plays a major role. However, in two dimensions all $\C$-elliptic operators induced by a projection $\A$ share the main structural condition \eqref{eq:complementarypart}, which allows to generalise the main result of \cite{Gmeineder} for the range $1<\mu<1+\frac{2}{n}$. This will be the main theorem of our paper and reads as follows:
 \begin{thm}[Gradient integrability]\label{thm:MainTheorem}
 	Let $u_0\in \sobo^{\Aop,1}(\Omega)$ and $f\in\hold^2(\R^{2\times 2}_\A)$ be  a variational integrand satisfying \eqref{eq:LinGrowth} and \eqref{eq:MuEllipticity} for $\mu\in(1,2)$. Then every local $\BV^\Aop$-minimiser $u$ of $\F$ is of class $\sobo^{\Aop,1}(\Omega;\R^2)\cap \sobo^{1,q}_\loc(\Omega;\R^2)$ for all $q\in[1,\infty)$. More precisely, for every subset $U\Subset \Omega$ there exists a constant $c=c(\Lambda,\mu,\Aop)>0$ such that whenever $\ball_{2r}(x_0)\Subset U$ we have 
 	\begin{align}\label{eq:FinalEstimateMainTheorem}
 		\begin{split}
 		&\|\D u\|_{\exp\lebe^{\frac{2-\mu}{3-\mu}}(\ball_{r}(x_0); \R^{2\times 2})} \\
 		&\hspace{1cm}\leq c \left(\bigg(1+\frac{1}{r^2}\bigg)\bigg[\bigg(1+\frac{1}{r^2}\bigg) \abs{\Aop u}(\overline{\ball}_{2r}(x_0)) + \dashint_{\ball_{2r}(x_0)} \frac{\abs{u}}{r} \dx{x}\bigg]^{\frac{1}{2-\mu}} + \dashint_{\ball_{r}(x_0)} \frac{\abs{u}}{r}\dx{x}\right).
 		\end{split}
 	\end{align}
 \end{thm}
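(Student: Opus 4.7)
The plan is to first prove \eqref{eq:FinalEstimateMainTheorem} for smooth minimisers of a regularised auxiliary problem and then to transfer it to the original local $\BV^\Aop$-minimiser $u$ via the no-gap identity \eqref{eq:ConsistencyRelation} together with lower semicontinuity. Concretely, fix $\ball_{2r}(x_0)\Subset U$, mollify the integrand to $f_\eps \coloneqq f * \rho_\eps + \tfrac{\eps}{2}\abs{\cdot}^2$ on $\R^{2\times 2}_\A$, and pick smooth mollifications $u^\eps$ of $u$ as boundary data. The auxiliary problem of minimising $w\mapsto\int_{\ball_{2r}(x_0)} f_\eps(\Aop w)\dx{x}$ under the Dirichlet condition $w-u^\eps\in\sobo^{\Aop,1}_0(\ball_{2r}(x_0))$ has a unique minimiser $v_\eps$, and classical elliptic regularity (using $\C$-ellipticity of $\Aop$ and the uniform $2$-ellipticity of $f_\eps$) renders $v_\eps$ smooth.

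\textbf{Caccioppoli second-kind inequality.} Differentiating the Euler-Lagrange equation $\div_\Aop(\D f_\eps(\Aop v_\eps))=0$ in a coordinate direction $\e_s$ and testing with $\eta^{2k}\partial_s v_\eps$ for a cutoff $\eta\in\cont^\infty_c(\ball_{2r}(x_0))$, $\eta\equiv 1$ on $\ball_r(x_0)$, the lower bound in \eqref{eq:MuEllipticity} combined with standard Cauchy-Schwarz/Young manipulations yields a weighted Caccioppoli inequality of second kind,
\begin{align*}
\int_{\ball_{2r}(x_0)}\frac{\eta^{2k}\abs{\partial_s\Aop v_\eps}^2}{(1+\abs{\Aop v_\eps}^2)^{\mu/2}}\dx{x} \leq c\, k^2 \int_{\ball_{2r}(x_0)}\eta^{2k-2}\abs{\nabla\eta}^2\,(1+\abs{\Aop v_\eps}^2)^{1-\mu/2}\dx{x},
\end{align*}
with a constant $c$ independent of $\eps$, $k$, and $r$.

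\textbf{Recovering the full gradient and iteration.} The decisive structural input is the combination of the pointwise decomposition \eqref{eq:complementarypart} with the second-order identity $\D^2=\mathbb{L}\circ\Aop$ from \Cref{lem:Cellipt2d}: while the $\A$-part of $\D v_\eps$ coincides with $\Aop v_\eps$ and is thus controlled directly, the complementary scalar $\gamma(\D v_\eps)$ is invisible to $\A$. However, $\nabla\gamma(\D v_\eps)$ is a fixed linear combination of entries of $\mathbb{L}(\Aop v_\eps)$, so the Caccioppoli bound for $\partial_s\Aop v_\eps$ upgrades, after a Poincar\'{e} correction (the finite-dimensional null space of $\Aop$ being absorbed into the $\dashint\abs{u}/r\,\dx{x}$ term in \eqref{eq:FinalEstimateMainTheorem}), to a bound for the full gradient $\D v_\eps$. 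Feeding this into a Moser-type iteration in the parameter $k$, combined with the two-dimensional Trudinger embedding $\sobo^{1,2}\hookrightarrow\exp\L^2$, the geometric series governed by the factor $1-\mu/2$ sums precisely when $\mu<2$ and produces both the sharp Orlicz exponent $(2-\mu)/(3-\mu)$ and the outer power $\tfrac{1}{2-\mu}$ appearing on the right-hand side of \eqref{eq:FinalEstimateMainTheorem}.

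\textbf{Passage to the limit and main obstacle.} As $\eps\downarrow 0$ one has $v_\eps\weakstar u$ in $\BV^\Aop(\ball_{2r}(x_0))$: the no-gap identity \eqref{eq:ConsistencyRelation}, lower semicontinuity of $\overline{\F}$, and local minimality of $u$ together pin down the limit, while the uniform exponential integrability bound for $\D v_\eps$ survives by weak lower semicontinuity of the Orlicz norm. I expect the hardest step to be the recovery of the complementary gradient component via \Cref{lem:Cellipt2d}: the $\mu$-elliptic Caccioppoli estimate only senses $\Aop v_\eps$, and without the two-dimensional second-order identity there is no control on $\gamma(\D v_\eps)\mathfrak{G}$. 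This is precisely the structural feature that confines the approach to $n=2$ and plays the role that the Korn-type manipulations play in the symmetric-gradient case \cite{Gmeineder,BEG}. A second subtle point is the careful bookkeeping in the Moser iteration needed to reach the sharp exponent $(2-\mu)/(3-\mu)$ rather than a weaker value.
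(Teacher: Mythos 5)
The central gap in your proposal is that you do not actually control \emph{which} $\BV^\Aop$-minimiser your regularised approximations converge to. The theorem asserts the regularity for \emph{every} local $\BV^\Aop$-minimiser, and the paper explicitly flags that the relaxed functional $\overline{\F}_{u_0}$ is not strictly convex (the recession function is $1$-homogeneous), so minimisers are genuinely non-unique. Your auxiliary problem — minimise $\int f_\eps(\Aop w)\dx{x}$ with mollified boundary data $u^\eps$ — has a unique $v_\eps$ by strict convexity, and along a subsequence $v_\eps$ will weak$^\ast$ converge in $\BV^\Aop(\ball_{2r}(x_0))$ to \emph{some} relaxed minimiser. But the no-gap identity and lower semicontinuity only tell you that the limit is again a minimiser; they do not force it to coincide with $u$. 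This is exactly the obstruction \textsc{Beck--Schmidt} isolated in the full-gradient case and the reason the present paper invokes the Ekeland variational principle: the Ekeland sequence $(v_j)$ is constructed to sit within $\sobo^{-2,1}$-distance $1/j$ of a specific minimising sequence $\tilde u_j\to u$ ($\Aop$-area-strictly), which pins the limit to be $u$ itself. To repair your argument you would either need to insert Ekeland (at which point the exact Euler--Lagrange equation becomes an inequality and you inherit the extra error terms involving $\|v_j\|_{\L^1}$) or produce an independent selection argument for the limit.

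A secondary discrepancy: the Caccioppoli inequality you state cannot have the power $(1+\abs{\Aop v_\eps}^2)^{1-\mu/2}$ on the right-hand side. The Young/Cauchy--Schwarz step against the upper ellipticity bound $\Lambda/(1+\abs{P}^2)^{1/2}$ produces $(1+\abs{\Aop v_\eps}^2)^{1/2}\sim\abs{\Aop v_\eps}$; since $1-\mu/2<1/2$ for $\mu>1$, the exponent you wrote would be a strictly stronger claim and is not what comes out of the computation. This is not fatal (the $\L^1$ bound on $\Aop v_\eps$ from linear growth is what you want anyway), but the stated form is incorrect.

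Finally, the Moser iteration in $k$ is unnecessary: the paper obtains the weighted second-order estimate once, with a single fixed cutoff power $\varrho^4$, and then applies the two-dimensional Trudinger embedding directly to $V_\mu(\A[\D v_j])=(1+\abs{\A[\D v_j]}^2)^{(2-\mu)/4}\in\sobo^{1,2}(\ball_r(x_0))$; both the Orlicz exponent $2-\mu$ and the outer power $1/(2-\mu)$ fall out of the elementary inequality $V_\mu(\xi)^2\geq\abs{\xi}^{2-\mu}$ together with a rescaling of the Luxemburg norm, without any geometric series. Relatedly, you should be explicit that the exponent $\frac{2-\mu}{3-\mu}$ on the left of \eqref{eq:FinalEstimateMainTheorem} is not the direct output of Trudinger — it is the loss incurred by the Korn-type inequality in Orlicz spaces (Corollary \ref{cor:KornType_Orlicz_C_Elliptic}) needed to pass from $\A[\D u]$ to $\D u$, and that inequality in turn relies on a Poincar\'e-type estimate on balls, the characterisation of $\ker\Aop$ (Corollary \ref{cor:kernel}), and the Cianchi-type construction adapted to $\C$-elliptic $\Aop$ via \Cref{lem:Cellipt2d}. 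You correctly identify \eqref{eq:complementarypart} and $\D^2=\mathbb{L}\circ\Aop$ as the structural backbone, but the use of that structure both in the Caccioppoli step and in the Korn step needs to be spelled out.
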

 \begin{rem}
 	The arguments we use in our proof also show that this statement holds true in \emph{all} dimensions for the part map $\A=\dev$, cf. Remarks \ref{rem:dev1} and \ref{rem:dev2}.
 \end{rem}
 At this stage, we briefly comment on the proof of our main theorem \eqref{thm:MainTheorem}. In order to overcome the possible non-uniqueness we will adapt the vanishing viscosity method in conjunction with the Ekeland variational principle. We start with an arbitrary $\BV^\Aop$-minimising, whose approximation is a minimising sequence for the original functional. Since this sequence does not satisfy the corresponding Euler-Lagrange inequality, we add an additional regularising term to our functional and apply the Ekeland variational principle in order to derive a minimising sequence of almost-minimisers $(v_j)_{j\in\N}$ close to the original sequence. The latter comes naturally with an Euler-Lagrange inequality, which turns out to be sufficient to derive weighted  second-order estimates, which will be the key ingredient for showing gradient integrability. In order to derive these bounds, we apply the Ekeland variational principle in the space $\sobo^{-2,1}$, which may be surprising at first glimpse: Let us recall that for the symmetric gradient there holds
 \begin{equation*}
 	\partial_{i}\partial_j u_k=\partial_i \sym\D u^{(jk)}-\partial_k \sym\D u^{(ij)}+\partial_j \sym\D u^{(ik)}.
 \end{equation*}
for all $u\in\hold^\infty_c(\R^n;\R^n)$ and $i,j,k\in\{1,\dots,n\}$.  By  \Cref{lem:Cellipt2d} such a representation remains valid in our situation. In order to obtain the weighted estimates, we have to test the Euler-Lagrange inequality in a differentiated version by second-order quantities behaving approximately like $\Delta v_j$. At this stage the usage of $\sobo^{-1,1}$ as proposed in \cite{BeckSchmidt} would destroy any estimates, since we only have 
\begin{equation*}
	\|\Delta v_j\|_{\sobo^{-1,1}}\lesssim \|\D v_j\|_{\L^1}.
\end{equation*}
The right-hand side of the latter cannot be controlled because of Ornstein's Non-Inequality. However, as in the case of the symmetric gradient, cf.~\cite{Gmeineder, BEG}, the space $\sobo^{-2,1}$ turns out to be sufficient, because in this case we have 
\begin{equation*}
	\|\Delta v_j\|_{\sobo^{-2,1}}\lesssim \|v_j\|_{\lebe^1}.
\end{equation*}
Moreover, the explicit structure of the operator is \textit{essential}, which already appeared in the framework of the symmetric gradient. Let us emphasise that the decomposition \eqref{eq:complementarypart} is the main building block in order to conclude the weighted second-order estimates. Without the latter, it currently appears hopeless to apply the described method to obtain a similar result that includes $\C$-elliptic differential operators in all dimensions. 

\section{Preliminaries}\label{sec:Preliminaries}
We mainly use standard notation.  We equip $\R^n$ and $\R^{n\times m}$ with the usual Euclidean or Frobenius norm, respectively and denote both by $\abs{\,\cdot\,}$. The latter is induced by the matrix inner product $\langle A, B\rangle \coloneqq A:B\coloneqq \tr(A^\top B)$ for $A,B\in\R^{n\times m}$. 
Moreover, we write $\ball_r(x_0)=\{x\in\R^n\colon \abs{x-x_0}< r\}$ for an open ball with radius $r>0$ centred at $x_0\in\R^n$. 
$\Leb^n$ and $\Hd^{n-1}$ indicate the $n$-dimensional Lebesgue and $(n-1)$-dimensional Hausdorff measure, respectively and we simply write $\dx{x}$, when integrating with respect to $\Leb^n$. Moreover we set $\omega_n \coloneqq \Leb^n(\ball_1(0))$. For a (Lebesgue) measurable set $U\subset\R^n$ with $0<\Leb^n(U)<\infty$ and $f\in\lebe^1_\loc(\R^n;\R^m)$ we denote the average by
\begin{align*}
	\dashint_U f(x)\dx{x} \coloneqq \frac{1}{\Leb^n(U)}\int_\Omega f(x)\dx{x}.
\end{align*}
Furthermore, $V$ and $W$ are two finite dimensional normed spaces and with some abuse of notation we will denote both norms by $\abs{\,\cdot\,}$, as it is clear from the context. We use the notation $a\otimes b= ab^\top$ for the usual tensor product of two vectors $a\in\R^n$ and $b\in\R^m$. Finally, $c>0$ denotes a generic constant, whose value may change from line to line and whose dependencies are usually indicated, whereas the precise value is irrelevant. Finally, we state two elementary estimates for linear growth integrands, which we will use in the proof of the main theorem.
\begin{lem}[{\cite[Lem. 5.2]{Giusti}, \cite[Lem. 2.8]{BeckSchmidt}}]
	Suppose that $f\colon \R^{2\times 2}_\A\to \R$ is a convex $\hold^1$-function which satisfies the linear growth condition \eqref{eq:LinGrowth}. Then we have the following statements:
	\begin{enumerate}
		\item For all $z\in\R^{2\times 2}_\A$ there holds $\abs{\D f(z)} \leq c_2$ and in particular $\mathrm{Lip}(f)\leq c_2$.
		\item For all $z\in\R^{2\times 2}_\A$ there holds $\langle \D f(z),z\rangle\geq c_1\abs{z}-c_2 $.
	\end{enumerate}
\end{lem}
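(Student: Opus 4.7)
The statement is a standard, elementary fact about convex $\hold^1$ integrands with linear growth, and the proof proceeds by exploiting the supporting hyperplane inequality together with the upper and lower bounds in \eqref{eq:LinGrowth}. The plan is to prove (a) by probing the convexity inequality with arbitrary unit directions and letting the probe parameter go to infinity, and to prove (b) by evaluating the supporting hyperplane inequality at the origin.

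For (a), fix $z\in \R^{2\times 2}_\A$ and an arbitrary unit vector $v\in \R^{2\times 2}_\A$. By convexity, for every $t>0$ one has
\begin{align*}
t\,\langle \D f(z),v\rangle \leq f(z+tv)-f(z).
\end{align*}
Using the upper bound in \eqref{eq:LinGrowth} on the right, $f(z+tv)\leq c_2(1+|z|+t)$, and discarding $f(z)$ by the trivial bound $f(z)\geq 0$ (which follows from the lower bound in \eqref{eq:LinGrowth}), one divides through by $t$ and lets $t\to\infty$ to obtain $\langle \D f(z),v\rangle\leq c_2$. Taking the supremum over all unit $v$ yields $|\D f(z)|\leq c_2$, and the Lipschitz claim is then a direct application of the fundamental theorem of calculus along segments in $\R^{2\times2}_\A$.

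For (b), we apply the supporting hyperplane inequality the other way round: evaluating at the point $0$ gives
\begin{align*}
f(0)\geq f(z)+\langle \D f(z),-z\rangle,
\end{align*}
which rearranges to $\langle \D f(z),z\rangle \geq f(z)-f(0)$. The lower bound in \eqref{eq:LinGrowth} yields $f(z)\geq c_1|z|$ while the upper bound at $P=0$ gives $f(0)\leq c_2$, so $\langle \D f(z),z\rangle\geq c_1|z|-c_2$, as desired.

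There is no serious obstacle here: convexity forces $f$ to sit above every tangent plane, and linear growth forces those planes to be essentially horizontal at infinity, so $\D f$ must be uniformly bounded; conversely plugging the origin into the tangent-plane inequality recovers a linear lower bound on $\langle \D f(z),z\rangle$. The only tiny care point is to ensure that the probe direction $v$ can be chosen in the subspace $\R^{2\times 2}_\A$ so that the supremum in (a) really gives the full norm of $\D f(z)$ as an element of the dual of $\R^{2\times 2}_\A$, which is automatic once one identifies $\D f(z)$ with a vector in $\R^{2\times 2}_\A$ via the inner product restricted to that subspace.
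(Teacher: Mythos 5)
Your proof is correct and follows the standard elementary argument (supporting hyperplane plus linear growth), which is precisely what the cited references (Giusti, Beck–Schmidt) employ; the paper itself offers no proof and simply cites those sources. Nothing further to compare.
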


\subsection{Function spaces and related topics} For an open set $\Omega\subset\R^n$, $1\leq p\leq \infty$ and $k\in\N$ we denote by $\lebe^p(\Omega)$ and $\sobo^{k,p}(\Omega)$ the Lebesgue and Sobolev spaces, cf.~e.g.~\cite{Giusti}. Without further comment we also use $V$-valued versions of these spaces. For a finite Radon measure $\mu\in\mathrm{RM}_{\rm fin}(\Omega;V)$ we denote by $\abs{\mu}$ its total variation. Moreover, we have the Lebesgue-Radon-Nikod\'ym decomposition 
\begin{equation}\label{eq:LebesgueRadonNikodym}
	\mu = \mu^a + \mu^s = \frac{\dx{\mu^a}}{\dx{\Leb^n}}\Leb^n + \frac{\dx{\mu^s}}{\dx{\abs{\mu^s}}}\abs{\mu^s},
\end{equation}
into absolutely continuous and singular part with respect to $\Leb^n$. 
\subsubsection{Functions of bounded $\Aop$-variation}
As already mentioned above, a function of bounded $\Aop$-variation, is a function $u\in\lebe^1(\Omega;V)$ such that $\Aop u$ is a finite Radon-measure. We notice, that a linear, homogeneous, first-order differential operator $\Aop$ can be also characterised by its part map $\A\in\Lin(V\otimes \R^n; W)$ via $\Aop u = \A[\D u ]$. 
In order to introduce a norm on $\BV^\Aop(\Omega)$, we define the \emph{dual} or \emph{formally adjoint operator} $\Aop^\ast$ as the differential operator $\Aop^\ast$ on $\R^n$ from $W$ to $V$ given by
\begin{equation*}
	\Aop^\ast v\coloneqq\sum_{\alpha=1}^n \Aop_\alpha^\ast \del_\alpha v,\quad v\colon \R^n\to W,
\end{equation*}
where $\Aop_\alpha^\ast$ denotes the adjoint map of $\Aop_\alpha$. This allows us to introduce the \emph{total \(\Aop\)-variation} of \(u \in \lebe^1_{\mathrm{loc}}(\Omega; V)\) by
\begin{equation*}
	|\Aop u|(\Omega) \coloneqq \sup \left\{ \int_{\Omega} \langle u, \Aop^\ast \varphi \rangle \dx{x}\colon \varphi \in \hold^1_c(\Omega; W), \, \|\varphi\|_{\infty} \leq 1 \right\},
\end{equation*}
which allows us to characterise the space \(\BV^\Aop(\Omega)\) equivalently as the set of functions \(u\) having finite \(\Aop\)-variation.
 The local variant $\BV^\Aop_\loc(\Omega)$ is defined in the obvious manner. Furthermore, the space $\BV^\Aop$ is a Banach space with respect to the norm
\begin{equation*}
	\|u\|_{\BV^\Aop(\Omega)}\coloneqq \|u\|_{\lebe^1(\Omega;V)} + \abs{\Aop u}(\Omega).
\end{equation*}
Similar to the $\BV$-setting there are three different notions of convergence available on $\BV^\Aop$. Namely, given $v\in \BV^\Aop(\Omega)$ and a sequence $(v_j)_{j\in\N}$ in  $\BV^\Aop(\Omega)$ we say that $v_j$ converges to $v$ as $j\to\infty$
	\begin{itemize}
	\item in the \emph{weak$^\ast$-sense}, in symbols $v_j\weakstar v$, if $v_j\to v$ strongly in $\lebe^1(\Omega;V)$ and $\Aop v_j \weakstar \Aop v$ in the sense of $W$-valued Radon measures.
	\item in the \emph{$\Aop$-strict sense}, if $v_j\to v$ strongly in $\lebe^1(\Omega;V)$ and $\abs{\Aop v_j}(\Omega)\to \abs{\Aop v}(\Omega)$ as $j\to \infty$.
	\item in the \emph{$\Aop$-area-strict sense} if $v_j\to v$ strongly in $\lebe^1(\Omega;V)$ and $\langle\Aop v\rangle(\Omega)\to \langle \Aop v\rangle(\Omega)$ as $j\to \infty$, where we have abbreviated
	\begin{equation*}
		\langle \Aop v\rangle (\Omega)\coloneqq  \int_\Omega \sqrt{1+\abs{\Aop v}^2}\dx{x} +\abs{\Aop^s v}(\Omega)\quad\mbox{for}\quad u\in \BV^\Aop(\Omega).
	\end{equation*} 
	\end{itemize}
	In the last formula we have used the Lebesgue-Radon-Nikod\'ym decomposition \eqref{eq:LebesgueRadonNikodym_Au}.
\subsubsection{Orlicz-Sobolev-spaces} In order to derive estimates for the full gradient from the one for $\Aop u$ as needed in \eqref{eq:FinalEstimateMainTheorem}, we use a suitable Korn-type inequality, cf.~\Cref{sec:Korn_in_Orlicz}. Towards this aim, let $\Phi\colon [0,\infty)\to [0,\infty)$ be a Young-function, meaning
\begin{equation*}
\Phi(t) = \int_0^t \phi(s)\dx{s} \quad\mbox{for}\quad t\geq 0,
\end{equation*}
where $\phi:[0,\infty)\to [0,\infty]$ is a non-decreasing, left-continuous function, which is neither identical to $0$ nor $\infty$. We then define the \emph{Lebesgue-Orlicz space} as the space
\begin{equation*}
	\lebe^\Phi(\Omega;V)\coloneqq \{u\colon \Omega\to \R^n\mbox{  measurable  }\colon \|u\|_{\lebe^\Phi(\Omega;V)}<\infty\}, 
\end{equation*}
where $\|\cdot\|_{\lebe^\Phi(\Omega;V)}$ denotes the \emph{Luxemburg norm} given by
\begin{equation*}
	\|u\|_{\lebe^\Phi(\Omega;V)}\coloneqq \inf\left\{\lambda>0\colon \int_\Omega \Phi\left(\frac{\abs{u}}{\lambda}\right)\dx{x}\leq 1\right\}.
\end{equation*}
We abbreviate $\exp\lebe^\beta(\Omega;V)\coloneqq \lebe^{\Phi_\beta}(\Omega;V)$ for the Orlicz-space corresponding to $\Phi_\beta(t)\coloneqq \exp(t^\beta)$ with $t\geq 0$ and $\beta>0$. Moreover, we define the \emph{Orlicz-Sobolev space} $\sobo^{1,\Phi}(\Omega;V)$ as
\begin{equation*}
	\sobo^{1,\Phi}(\Omega;V)\coloneqq \{u\in \lebe^\Phi(\Omega;V)\colon u\mbox{ is weakly differentiable and }\D u\in \lebe^\Phi(\Omega;\Lin(\R^n;V))\},
\end{equation*}
which is a Banach space when equipped with the norm 
\begin{equation*}
	\|u\|_{\sobo^{1,\Phi}(\Omega;V)} \coloneqq \|u\|_{\lebe^\Phi(\Omega;V)} + \|\D u\|_{\lebe^\Phi(\Omega;\Lin(\R^n, V))}.
\end{equation*}

\subsubsection{Negative Sobolev spaces} For $k\in\N$ we define the \emph{negative Sobolev space} $\sobo^{-k,1}(\Omega;V)$ as the set of all $V$-valued distributions $T\in\mathscr{D}'(\Omega;V)$ which admit the representation 
\begin{equation*}
	T= \sum_{\abs{\alpha}\leq k} \del^\alpha T_\alpha \quad\mbox{with}\quad T_\alpha\in \lebe^{1}(\Omega;V),\,\alpha\in\N^n_0 \quad\mbox{and}\quad \abs{\alpha}\leq k.
\end{equation*}
This space becomes a Banach space when endowed with the norm 
\begin{equation*}
	\|T\|_{\sobo^{-k,1}(\Omega;V)}\coloneqq \inf \bigg\{ \sum_{\abs{\alpha}\leq k} \|T_\alpha\|_{\lebe^1(\Omega;V)}: T=\sum_{\abs{\alpha}\leq k} \del^\alpha T_\alpha, \hspace{0.1cm} T_\alpha\in \lebe^1(\Omega;V)\bigg\}.
\end{equation*}
As a consequence of \cite[Lem. 2.5]{Gmeineder} (cf.~ also \cite[Sec. 3.2.3]{BEG}), we recall for $s\in\{1,\dots,n\}$ and $h>0$ the estimates
\begin{align}
	&\|\del_s w \|_{\sobo^{-2,1}(\Omega;\R^2)} \leq \|w\|_{\sobo^{-1,1}(\Omega;\R^2)}\leq \|w\|_{\lebe^1(\Omega;\R^2)},\\
	& \|\del_s w\|_{\sobo^{-1,1}(\Omega;\R^2)}\leq \|w\|_{\lebe^1(\Omega;\R^2)}\\
	&\|\triangle_{s,h} w\|_{\sobo^{-1,1}(\Omega_h;\R^2)}\leq\|w\|_{\lebe^1(\Omega;\R^2)}\label{eq:EstimateNegSob3}
\end{align}
where $\Omega_h \coloneqq \{x\in\Omega: \mathrm{dist}(x,\del\Omega)>h\}$ and $\triangle_{s,h} w$ denotes the finite difference quotient, i.e.~
\[
\triangle_{s,h} w \coloneqq \frac{w(x+he_s)- w(x)}{h}, \quad x\in\Omega_h. 
\]

\subsection{(Lower semi-)continuity results}
Since the distributions $\Aop u$ are merely Radon measures we need lower semi-continuity results for functionals defined on measures. In this way, we define based on the Lebesgue-Radon-Nikod\'ym decomposition \eqref{eq:LebesgueRadonNikodym}, another measure $f(\mu)$ through
\begin{equation}\label{eq:FunctionalDefinedOnMeasures}
	f(\mu)(U) \coloneqq \int_U f\left(\frac{\dx{\mu^a}}{\dx{\Leb^n}}\right)\dx{x}+ \int_U f^\infty\left(\frac{\dx{\mu^s}}{\dx{\abs{\mu^s}}}\right)\dx{\abs{\mu^s}}\quad\mbox{for Borel subsets  }U\subset\Omega.
\end{equation}
We notice, that if $f$ is of linear growth in the sense of \eqref{eq:LinGrowth}, then the recession function $f^\infty$ of $f$ defined through \eqref{eq:RecessionFunction} is a well-defined, 1-homogeneous, lower semi-continuous and convex function taking finite values. The last condition can no longer be guaranteed, if one drops linear growth assumption from above.  The main (lower semi-)continuity result for functionals of the form \eqref{eq:FunctionalDefinedOnMeasures} is due to \textsc{Reshetnyak}, which we state in the following tailor-made version:
\begin{thm}[Reshetnyak (lower semi-)continuity theorem, {\cite{Reshetnyak},\cite[Thm. 2.4]{BeckSchmidt}}]\label{thm:Reshetnyak}
	Let \,$\Omega\subset\R^n$ be an open and bounded set and let $f\colon \R^{2\times 2}_\A\to \R$ be a convex function of linear growth from below. For functions $u,u_1,u_2,\dots\in \BV^{\Aop}(\Omega)$ we have
	\begin{enumerate}
		\item if $u_j\weakstar u$ in the weak$^\ast$-sense in $\BV^{\Aop}(\Omega)$ then 
		\begin{equation*}
			f(\Aop u)(\Omega) \leq \liminf_{j\to\infty} f(\Aop u_j)(\Omega),
		\end{equation*}
		\item and if $u_j\to u$ in the $\Aop$-area-strict sense in $\BV^{\Aop}(\Omega)$ then
		\begin{equation*}
			f(\Aop u)(\Omega) = \lim_{j\to\infty} f(\Aop u_j)(\Omega).
		\end{equation*}
	\end{enumerate}
\end{thm}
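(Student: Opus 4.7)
The strategy is to reduce both assertions to the classical Reshetnyak theorems for positively $1$-homogeneous, convex integrands acting on Radon measures, via the perspective transform
\begin{equation*}
\hat f(t,P)\coloneqq \begin{cases} t\,f(P/t), & t>0, \\ f^\infty(P), & t=0, \end{cases}
\qquad (t,P)\in[0,\infty)\times\R^{2\times 2}_\A.
\end{equation*}
Convexity and the linear growth \eqref{eq:LinGrowth} of $f$ imply that $\hat f$ is positively $1$-homogeneous, convex, and \emph{continuous} on the closed cone $[0,\infty)\times\R^{2\times 2}_\A$: continuity at $\{t=0\}$ reduces to $tf(P/t)\to f^\infty(P)$ locally uniformly in $P$ as $t\to 0^+$, which rests on the Lipschitz bound $\mathrm{Lip}(f)\le c_2$ provided by \eqref{eq:LinGrowth}. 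For $v\in\BV^\Aop(\Omega)$ we then associate the $(\R\times\R^{2\times 2}_\A)$-valued Radon measure $\nu_v\coloneqq (\Leb^n,\Aop v)$ on $\Omega$; a direct splitting via the Lebesgue--Radon--Nikod\'ym decomposition \eqref{eq:LebesgueRadonNikodym_Au} gives
\begin{equation*}
f(\Aop v)(\Omega) \,=\, \int_\Omega \hat f\!\left(\frac{\dx{\nu_v}}{\dx{\abs{\nu_v}}}\right)\dx{\abs{\nu_v}}.
\end{equation*}

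For part (a), the hypothesis $u_j\weakstar u$ in $\BV^\Aop(\Omega)$ immediately yields $\nu_{u_j}\weakstar \nu_u$ as $(\R\times W)$-valued Radon measures, the first component being the fixed Lebesgue measure. Since $\hat f$ is continuous, convex and positively $1$-homogeneous on the cone, it admits a representation as a countable supremum $\hat f=\sup_{k\in\N}\langle a_k,\cdot\rangle$ of linear supporting functionals. For any finite Borel partition $\{E_l\}$ of $\Omega$ and any choice of indices $\{k_l\}$, one has $\sum_l \langle a_{k_l},\nu_u(E_l)\rangle\le \hat f(\nu_u)(\Omega)$, while the analogous quantity evaluated on $\nu_{u_j}$ passes to the limit by weak$^\ast$-convergence. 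Taking the supremum over partitions and indices and then $\liminf_j$ delivers precisely Reshetnyak's lower semi-continuity theorem applied to $\hat f$, which is (a).

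For part (b), a direct computation of the total variation of $\nu_u$ gives
\begin{equation*}
\abs{\nu_u}(\Omega) \,=\, \int_\Omega\sqrt{1+\abs{\A[\D u]}^2}\dx{x}+\abs{\Aop^s u}(\Omega) \,=\, \langle\Aop u\rangle(\Omega),
\end{equation*}
so that $\Aop$-area-strict convergence $u_j\to u$ corresponds exactly to strict convergence $\nu_{u_j}\to \nu_u$ of Radon measures, i.e.\ weak$^\ast$-convergence together with convergence of total variations. Reshetnyak's classical continuity theorem for continuous, positively $1$-homogeneous integrands then yields $\hat f(\nu_{u_j})(\Omega)\to \hat f(\nu_u)(\Omega)$, which is the claimed equality in (b).

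I expect the main technical obstacle to be ensuring that $\hat f$ admits a \emph{continuous}, not merely lower semi-continuous, extension to the face $\{t=0\}$ of the cone: lower semi-continuity would suffice for (a), but for (b) one genuinely needs continuity, and this is precisely where the full two-sided linear growth bound \eqref{eq:LinGrowth}, not only the lower bound $f\gtrsim \abs{\cdot}$, enters in a decisive way.
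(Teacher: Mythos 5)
The paper does not give a proof of \Cref{thm:Reshetnyak}; it records it as a known result with citations to Reshetnyak's original work and to \cite[Thm.\ 2.4]{BeckSchmidt}, adapted to the $\BV^\Aop$-setting. Your perspective-map (sometimes called ``lift'') argument is precisely the standard route by which the cited results are proved for non-homogeneous integrands, so there is no divergence from the paper here: you reduce to the positively $1$-homogeneous Reshetnyak theorems on $(\R\times\R^{2\times 2}_\A)$-valued measures, and the identification
\begin{equation*}
f(\Aop v)(\Omega) \,=\, \int_\Omega \hat f\!\left(\frac{\dx{\nu_v}}{\dx{\abs{\nu_v}}}\right)\dx{\abs{\nu_v}},\qquad \nu_v=(\Leb^n,\Aop v),
\end{equation*}
together with $\abs{\nu_v}(\Omega)=\langle\Aop v\rangle(\Omega)$, correctly translates weak$^\ast$- and $\Aop$-area-strict convergence in $\BV^\Aop$ into weak$^\ast$- and strict convergence of the lifted measures.

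Your closing observation is the one point worth spelling out against the paper's phrasing. The theorem as stated only assumes ``linear growth from below,'' and this is indeed enough for part (a): $\hat f$ is then a $[0,+\infty]$-valued, convex, positively $1$-homogeneous and lower semi-continuous integrand, and the Goffman--Serrin countable-supremum argument you sketch gives lower semi-continuity under weak$^\ast$-convergence. For part (b), however, continuity of $\hat f$ on the closed cone (equivalently, finiteness and continuity of $f^\infty$) is genuinely needed, and this is supplied only once the upper bound in \eqref{eq:LinGrowth} is available: it gives $\mathrm{Lip}(f)\le c_2$, hence the locally uniform convergence $t f(P/t)\to f^\infty(P)$ as $t\downarrow 0$. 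So the statement of (b), taken literally with only the lower bound, is slightly too liberal; in the paper this is harmless because every application is to integrands satisfying the full two-sided condition \eqref{eq:LinGrowth}, as the paper's own remark following the theorem already makes explicit. Your flagging of this is correct and the proof is otherwise sound.
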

\noindent In particular, recalling \eqref{eq:LebesgueRadonNikodym_Au}, we infer that the relaxed functionals \eqref{eq:Relaxation} are lower semi-continuous with respect to the weak$^\ast$-convergence as well as continuous with respect to the $\Aop$-area-strict convergence, if $f\colon\R^{2\times2}_\A\to \R$ is a convex integrand of linear growth in the sense of \eqref{eq:LinGrowth}.

\pagebreak \noindent Finally, we state a customised version of \cite[Lem. 2.6]{Gmeineder}, which will be necessary when applying the Ekeland variational principle \Cref{prop:Ekeland} later on. 
\begin{lem}\label{lem:LowerSemicontinuity}
	Let $\Omega\subset\R^2$ be an open and bounded set with Lipschitz boundary, $1<q<\infty$ and $k\in\N$. Moreover, assume that $\mathfrak{f}\colon\R^{2\times 2}_\A\to \R_{\geq 0}$ is a convex function that satisfies $\tfrac{1}{c} \abs{P}^q \leq \mathfrak{f} (P)\leq c(1+\abs{P}^q)$ for some $c>0$ and all $P\in\R^{2\times 2}_\A$. Then for every $u_0\in\sobo^{1,q}(\Omega;\R^2)$, the functional 
	\begin{align*}
		\mathfrak{F}[u] \coloneqq 
		\begin{cases}
			\displaystyle \int_\Omega \mathfrak{f}(\Aop u) \dx{x}\quad &\mbox{if } u\in \mathscr{D}_{u_0}\coloneqq u_0 + \sobo^{\Aop,1}(\Omega;\R^2)\\
			+\infty\quad &\mbox{if } u\in\sobo^{-k,1}(\Omega;\R^2)\setminus \mathscr{D}_{u_0}
		\end{cases}
	\end{align*}
is lower semi-continuous with respect to the norm topology on $\sobo^{-k,1}(\Omega;\R^2)$. 
\end{lem}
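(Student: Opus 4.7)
I would prove the lemma by the direct method: use the coercivity $\mathfrak{f}(P)\geq \tfrac{1}{c}|P|^q$ together with the Korn--Poincaré inequality stemming from the $\C$-ellipticity of $\Aop$ to upgrade the given $\sobo^{-k,1}$-convergence to weak $\sobo^{1,q}$-convergence, at which point the classical lower semi-continuity of convex $\lebe^q$-integrals closes the argument.

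Suppose $u_j\to u$ in $\sobo^{-k,1}(\Omega;\R^2)$. We may assume $\ell\coloneqq\liminf_{j\to\infty}\mathfrak{F}[u_j]<\infty$, and after extracting a subsequence (not relabelled) $\mathfrak{F}[u_j]\to\ell$ with $u_j\in\mathscr{D}_{u_0}$ for all $j$. The $q$-coercivity of $\mathfrak{f}$ then yields a uniform bound $\|\Aop u_j\|_{\lebe^q(\Omega)}\leq C$. Since $u_j-u_0$ belongs to the Dirichlet class $\sobo^{\Aop,q}_0(\Omega)$ (defined analogously to $\sobo^{\Aop,1}_0(\Omega)$), the Korn--Poincaré inequality for $\C$-elliptic operators, a consequence of the factorisation $\D^d=\mathbb{L}\circ\Aop$ in \Cref{lem:Cellipt}/\Cref{lem:Cellipt2d}, gives
\[
\|u_j-u_0\|_{\sobo^{1,q}(\Omega;\R^2)}\leq C\,\|\Aop(u_j-u_0)\|_{\lebe^q(\Omega)},
\]
whence $(u_j)$ is uniformly bounded in $\sobo^{1,q}(\Omega;\R^2)$.

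By reflexivity, a further subsequence $u_{j_k}\rightharpoonup\tilde u$ weakly in $\sobo^{1,q}(\Omega;\R^2)$, so in particular $\Aop u_{j_k}\rightharpoonup\Aop\tilde u$ weakly in $\lebe^q(\Omega;\R^{2\times 2}_\A)$. The Rellich--Kondrachov compact embedding $\sobo^{1,q}(\Omega)\hookrightarrow\hookrightarrow\lebe^q(\Omega)$, chained with the continuous inclusion $\lebe^q(\Omega)\hookrightarrow\lebe^1(\Omega)\hookrightarrow\sobo^{-k,1}(\Omega)$ (the latter since any $T\in\lebe^1$ satisfies $\|T\|_{\sobo^{-k,1}}\leq\|T\|_{\lebe^1}$), forces $u_{j_k}\to\tilde u$ strongly in $\sobo^{-k,1}$, and uniqueness of limits in $\sobo^{-k,1}$ gives $\tilde u=u$. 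Hence $u\in\sobo^{1,q}(\Omega;\R^2)$, and since the affine set $\mathscr{D}_{u_0}$ is weakly closed in $\sobo^{\Aop,q}$, also $u\in\mathscr{D}_{u_0}$. Classical weak lower semi-continuity of convex integrals of $q$-growth on $\lebe^q$ (Ioffe) then yields
\[
\mathfrak{F}[u]=\int_\Omega\mathfrak{f}(\Aop u)\dx{x}\leq\liminf_{k\to\infty}\int_\Omega\mathfrak{f}(\Aop u_{j_k})\dx{x}=\ell,
\]
which is the claim.

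The main obstacle is the Korn--Poincaré step: one must invoke the estimate $\|v\|_{\sobo^{1,q}}\leq C\,\|\Aop v\|_{\lebe^q}$ for $v\in\sobo^{\Aop,q}_0(\Omega)$ with $1<q<\infty$. This is precisely where the structural result of \Cref{lem:Cellipt} is essential, and where $q>1$ matters (Ornstein's Non-Inequality rules out the analogous bound at $q=1$). All remaining ingredients -- Rellich--Kondrachov compactness, weak closedness of affine subspaces, and Ioffe's theorem -- are standard.
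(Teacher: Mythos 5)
Your overall strategy (coercivity of $\mathfrak f$ giving an $\lebe^q$-bound on $\Aop u_j$, a Korn--Poincaré inequality to pass to a weak $\sobo^{1,q}$-bound, Rellich to identify the weak limit with the given $\sobo^{-k,1}$-limit, and then classical convex lower semi-continuity) is the right one and matches the route of the cited reference. However, there is a genuine gap in the step where you assert, without argument, that $u_j-u_0\in\sobo^{\Aop,q}_0(\Omega)$. What the hypotheses actually give you is that $u_j-u_0\in\sobo^{\Aop,1}_0(\Omega)$ (membership in the Dirichlet class) and, separately from the coercivity bound, that $\Aop(u_j-u_0)\in\lebe^q(\Omega)$. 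Membership in $\sobo^{\Aop,1}_0$ means $u_j-u_0$ is an $\sobo^{\Aop,1}$-norm limit of smooth compactly supported functions, i.e.\ the approximants $\varphi_m$ converge in $\lebe^1$-type norms only; there is no $\lebe^q$-convergence of $\Aop\varphi_m$ for free. Thus the identity $\sobo^{\Aop,1}_0(\Omega)\cap\{w:\Aop w\in\lebe^q(\Omega)\}=\sobo^{\Aop,q}_0(\Omega)$ is not automatic, and without it the Korn--Poincaré inequality you invoke does not apply to $u_j-u_0$. Filling the gap requires an argument, for instance: extend $u_j-u_0$ by zero to $\R^2$ (possible since it lies in $\sobo^{\Aop,1}_0$), observe that the extension has $\Aop$-derivative in $\lebe^q$ with compact support, mollify, and use the Calder\'on--Zygmund/Mikhlin estimate $\|\D v\|_{\lebe^q(\R^2)}\lesssim\|\Aop v\|_{\lebe^q(\R^2)}$ ($1<q<\infty$) for the mollified sequence to conclude $\D(u_j-u_0)\in\lebe^q$ and, via the Lipschitz boundary, $u_j-u_0\in\sobo^{1,q}_0(\Omega)$.

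Two further, smaller points. First, the attribution of the Korn inequality $\|\D v\|_{\lebe^q}\lesssim\|\Aop v\|_{\lebe^q}$ to the factorisation $\D^d=\mathbb L\circ\Aop$ from \Cref{lem:Cellipt}/\Cref{lem:Cellipt2d} is misleading: for $1<q<\infty$ this inequality follows from ellipticity alone (the multiplier $\xi\mapsto\xi\otimes\Aop[\xi]^{-1}$ is $0$-homogeneous and smooth on $\R^2\setminus\{0\}$, so Mikhlin applies); $\C$-ellipticity is what controls the kernel and gives the Poincaré part, not the Korn part. Second, you may prefer to sidestep the $\sobo^{\Aop,q}_0$ issue altogether: use the Poincaré inequality in $\sobo^{\Aop,1}_0$ to bound $u_j-u_0$ in $\lebe^1$, use weak compactness in $\lebe^q$ for $\Aop u_j$ and the compact embedding $\sobo^{\Aop,1}(\Omega)\hookrightarrow\hookrightarrow\lebe^1(\Omega)$ to obtain $u_j-u_0\to u-u_0$ strongly in $\lebe^1$ and $\Aop u_j\rightharpoonup\Aop u$ weakly in $\lebe^q$, hence weakly in $\lebe^1$; then $u_j-u_0\rightharpoonup u-u_0$ weakly in $\sobo^{\Aop,1}(\Omega)$, and since $\sobo^{\Aop,1}_0(\Omega)$ is a norm-closed linear subspace, hence weakly closed, $u\in\mathscr D_{u_0}$. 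The convex lower semi-continuity then closes the argument without ever needing $u_j-u_0\in\sobo^{\Aop,q}_0(\Omega)$.
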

\subsection{The Ekeland variational principle}

In order to overcome possible non-uniqueness phenomena for $\BV^\Aop$-minimisers, we use the Ekeland variational principle to construct a suitable minimising sequence. We will need it in the following version:

\begin{prop}[Ekeland {\cite{Ekeland}},{\cite[Thm. 5.6, Rem. 5.5]{Giusti}}] \label{prop:Ekeland}
	Let $(X,d)$ be a complete metric space and  $F:X\to \R\cup\{\infty\}$ be a lower semi-continuous function with respect to the metric topology, which is bounded from below and not identically $+\infty$. Suppose that for some $u\in X$ and some $\eps>0$, there holds $F[u]\leq \inf_X F +\eps$. Then there exists $v\in X$ such that 
	\begin{enumerate}
		\item $d(u,v) \leq \sqrt{\eps}$. 
		\item $F[v] \leq F[u]$ and 
		\item for all $w\in X$ there holds $F[v]\leq F[w]+\sqrt{\eps}\, d(v,w)$. 
	\end{enumerate}
\end{prop}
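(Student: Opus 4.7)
The plan is to introduce a partial order on $X$ that encodes exactly the inequality required of the minimiser in (c), and then to produce $v$ as the limit of a decreasing sequence in this order. Specifically, I would define
\[
y \preceq x \quad \Longleftrightarrow \quad F[y] + \sqrt{\eps}\, d(x,y) \leq F[x].
\]
Reflexivity is immediate, antisymmetry follows by adding the two defining inequalities for $x\preceq y$ and $y\preceq x$ together with $d\geq 0$, and transitivity reduces to the triangle inequality. The crucial observation is that any $v\preceq u$ automatically satisfies (a) and (b): from $F[v]+\sqrt{\eps}\,d(u,v)\leq F[u]\leq \inf_X F+\eps$ and $F[v]\geq \inf_X F$ one reads off $d(u,v)\leq\sqrt{\eps}$ and $F[v]\leq F[u]$. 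Moreover, condition (c) is nothing but the statement that $v$ is $\preceq$-minimal. Thus the task reduces to exhibiting a minimal element below $u$ in this order.

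To produce such an element I would iterate. Set $u_0 := u$, and, given $u_n$, define $\alpha_n := \inf\{F[y] : y\preceq u_n\}$, which is finite because $F$ is bounded from below, and pick $u_{n+1}\preceq u_n$ with $F[u_{n+1}] \leq \alpha_n + 2^{-n-1}$. From $u_{n+1}\preceq u_n$ we have $\sqrt{\eps}\,d(u_n,u_{n+1}) \leq F[u_n] - F[u_{n+1}]$; telescoping yields
\[
\sqrt{\eps}\sum_{n=0}^{\infty}d(u_n,u_{n+1}) \leq F[u_0] - \inf_X F \leq \eps,
\]
so $(u_n)$ is Cauchy. By completeness, $v := \lim u_n$ exists with $d(u,v)\leq\sqrt{\eps}$, and lower semi-continuity of $F$ combined with the monotonicity $F[u_{n+1}]\leq F[u_n]$ delivers $F[v]\leq F[u]$.

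It remains to establish the minimality of $v$. First one passes $\preceq$ to the limit: for fixed $n$ and $k\to\infty$, the inequality $F[u_{n+k}] + \sqrt{\eps}\,d(u_n,u_{n+k}) \leq F[u_n]$ yields $F[v] + \sqrt{\eps}\,d(u_n,v)\leq F[u_n]$ using lower semi-continuity of $F$ and continuity of $d$, so $v\preceq u_n$ for every $n$. Now suppose $w\preceq v$. Transitivity gives $w\preceq u_n$ for all $n$, hence $F[w]\geq \alpha_n$, while $w\preceq u_{n+1}$ combined with the tolerance estimate yields
\[
\sqrt{\eps}\,d(u_{n+1},w)\leq F[u_{n+1}] - F[w] \leq \alpha_n + 2^{-n-1} - \alpha_n = 2^{-n-1}.
\]
Letting $n\to\infty$ forces $w=v$. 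Hence for every $w\neq v$ we have $w\not\preceq v$, i.e., $F[v] < F[w] + \sqrt{\eps}\,d(v,w)$, which is (c).

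The main obstacle is the last step: extracting the \emph{strict} minimality of the limit point $v$ from the fact that each $u_{n+1}$ was chosen only approximately close to the infimum $\alpha_n$. This is overcome precisely by the geometrically decaying tolerance $2^{-n-1}$, which collapses the slack along the sequence; a crude choice such as $F[u_{n+1}]\leq \alpha_n+\tfrac{1}{2}(F[u_n]-\alpha_n)$ works equally well. The remaining subtlety — transporting the order relation $\preceq$ through the limit — is handled cleanly by the lower semi-continuity hypothesis on $F$, which is used exactly for this purpose and is the sole structural assumption (beyond completeness) that the argument requires.
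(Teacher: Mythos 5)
The paper does not prove this proposition itself; it simply cites Ekeland's original paper and Giusti's textbook, both of which present essentially the same argument you give: introduce the partial order $y\preceq x\iff F[y]+\sqrt{\eps}\,d(x,y)\leq F[x]$, build a $\preceq$-decreasing sequence with geometrically shrinking excess over the successive infima $\alpha_n$, use the telescoping sum to get Cauchyness, and then use lower semi-continuity to pass the order relation and the value estimate to the limit $v$, whose $\preceq$-minimality yields (c). Your write-up is correct, complete, and matches the standard proof found in the cited references.
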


\section{Proof of the main theorem}
\subsection{Ekeland approximation procedure}
We observe that \Cref{thm:MainTheorem} contains a local statement and therefore, whenever  $u\in\BV^\Aop_\loc(\Omega)$ is a local $\BV^\Aop$-minimiser of $F$, the restriction $u\vert_U\in \BV^\Aop$ to a relatively compact set $U\Subset\Omega$ with Lipsichitz boundary $\del U$, is a $\BV^\Aop$-minimiser with respect to its own boundary values. Hence, it is not too restrictive to assume 
\begin{equation*}
	u=u_0\in \BV^\Aop(\Omega). 
\end{equation*}
 Although many steps of the approximation are standard, we provide full details for the sake of completeness. The construction is divided into several steps. \\

\emph{Step 1: Construction of a regular minimising sequence subject to smooth boundary values.} Using \cite[Proposition 4.24]{BDG} we get a smooth sequence $(u_j)_{j\in\N}$ in $u_0 + \hold^\infty_c(\Omega;\R^2)$  such that 
\begin{align}\label{eq:AreaStrictConvergence}
	u_j \to u\quad\mbox{in the $\Aop$-area-strict topology on $\BV^{\Aop}$ as } j\to\infty.
\end{align}
As a consequence of \Cref{thm:Reshetnyak} and the discussion below, the functional $\overline{\F}_{u_0}[-;\Omega]$ is continuous with respect to the $\Aop$-area-strict topology on $\BV^{\Aop}(\Omega)$. Therefore, using the consistency relation from \eqref{eq:ConsistencyRelation}, we obtain 
\begin{align*}
	\lim_{j\to\infty} \F[u_j; \Omega] = \lim_{j\to\infty} \overline{\F}_{u_0}[u_j;\Omega] = \overline{\F}_{u_0}[u; \Omega] = \min_{\BV^{\Aop}(\Omega)} \overline{\F}_{u_0}[-;\Omega] = \inf_{\mathscr{D}_{u_0}}\F[-;\Omega]. 
\end{align*}
Hence, $(u_j)_{j\in\N}$ is a minimising sequence for $\F[-;\Omega]$ in $\mathscr{D}_{u_0}$. Passing to a non-relabeled subsequence if required, we may assume 
\begin{align}\label{eq:minimisingSequence}
	\F[u_j;\Omega] \leq \inf_{\mathscr{D}_{u_0}} \F[-;\Omega] + \frac{1}{8j^2}\quad\mbox{for all}\quad j\in\N.
\end{align}
Since $\Aop$ is $\C$-elliptic we can apply \cite[Theorem 4.17]{BDG} to conclude the existence of a continuous trace-operator $\tr\colon\sobo^{\Aop,1}(\Omega;\R^2) \to \lebe^1(\del\Omega, \Hd^{n-1})$. Therefore, we can find a compactly-supported extension $\overline{u}_0\in\sobo^{\Aop,1}(\R^2;\R^2)$ of $u_0$, cf.~also \cite[Corollary 4.21]{BDG}.
Next, we choose $\eps_j>0$ in a way, such that the mollification of \,$\overline{u}_0$ with a scaled radial standard mollifier $\varrho_{\eps_j}$ provides us with a sequence $(u_j^{\del\Omega})_{j\in\N}$ in $\sobo^{1,2}(\Omega;\R^2)$ defined by $u_j^{\del\Omega}\coloneqq (\overline{u}_0\ast \varrho_{\eps_j})\vert_{\Omega}$ which satisfying
\begin{align}\label{eq:EstimateMollification}
	\|u_j^{\del\Omega} - u_0\|_{\sobo^{\Aop,1}(\Omega)} \leq \frac{1}{8 \mathrm{Lip}(f) j^2}. 
\end{align}
This allows us to define approximate Dirichlet classes through $\mathscr{D}_j\coloneqq u_j^{\del\Omega} + \sobo^{\Aop,1}_0(\Omega;\R^2)$. Setting $\tilde{u}_j \coloneqq u_j -u_0 + u_j^{\del\Omega}\in\mathscr{D}_j$, we conclude using \eqref{eq:EstimateMollification} that 
\begin{align}\label{eq:minimisingSequenceWithSmoothBoundary}
	\|\tilde{u}_j - u_j\|_{\sobo^{\Aop,1}(\Omega)} = \|u_j^{\del\Omega} - u_0 \|_{\sobo^{\Aop,1}(\Omega)}\leq \frac{1}{8\mathrm{Lip}(f) j^2}. 
\end{align} 
We now show, that the infimum of $\F[-;\Omega]$ over $\mathscr{D}_{u_0}$ can be approximated by the corresponding one over the Dirichlet class $\mathscr{D}_j$, which is almost attained for $\tilde{u}_j$. Towards this claim, we observe that 
\begin{equation*}
	\big \vert \F[v;\Omega] - \F[w;\Omega]\big\vert \leq \mathrm{Lip}(f) \|\A[\D v]- \A[\D w] \|_{\lebe^1(\Omega;\R^{2\times2}_\A)},\quad \forall  v,w \in \sobo^{\Aop,1}(\Omega),
\end{equation*}
and thus, we obtain 
\begin{align*}
	&\F[u_0+\varphi; \Omega] \leq \F[u_j^{\del\Omega}+\varphi; \Omega] + \mathrm{Lip}(f) \|\A[\D u_0] - \A [\D u_j^{\del\Omega}]\|_{\L^1(\Omega; \R^{2\times 2}_\A)}\\
	&\F[u_j^{\del\Omega}+\varphi; \Omega ]\leq \F[u_0+\varphi; \Omega] + \mathrm{Lip(f)} \|\A[\D u_0] - \A[\D u_j^{\del\Omega}]\|_{\L^1(\Omega;\R^{2\times 2}_\A)}
\end{align*}
for every $\varphi\in\sobo^{\Aop,1}_0(\Omega)$. Taking into account \eqref{eq:minimisingSequenceWithSmoothBoundary}, we can take the infimum on the right-hand side over all $\varphi\in\sobo^{\Aop,1}_0(\Omega)$ to deduce
\begin{align}\label{eq:InfimaApproximation}
	\bigg\vert \inf_{\mathscr{D}_{u_0}}\F[-;\Omega] - \inf_{\mathscr{D}_{j}} \F[-;\Omega]\bigg\vert\leq \frac{1}{8j^2}.
\end{align}
Finally, we notice that the choice $\varphi = u_j-u_0$ in combination with \eqref{eq:minimisingSequence} results in 
\begin{align}\label{eq:AlmostAttainment}
	\F[\tilde{u}_j;\Omega] \leq \F[u_j; \Omega] + \frac{1}{8j^2} \leq \inf_{\mathscr{D}_{u_0}}\F[-;\Omega]+ \frac{1}{4j^2},
\end{align}
and hence, the infimum of $\F[-;\Omega]$ over $\mathscr{D}_j$ is almost attained by $\tilde{u}_j$. This step will be crucial, when verifying the assumptions of the Ekeland variational principle from \Cref{prop:Ekeland} in the next step. \\

\emph{Step 2: Definition of regularised functionals.} In this part, we implement a vanishing viscosity approximation by adding a quadratic regularisation term. More precisely, for $P\in\R^{2\times 2}_{\A}$ we define
\begin{align}\label{eq:DefinitionOfIntegrands}
	f_j(P)\coloneqq f(P) + \frac{1}{2A_j j^2}(1+\abs{P}^2)\quad\mbox{with}\quad A_j \coloneqq 1+\int_\Omega (1+ \abs{\A[\D\tilde{u}_j]}^2)\dx{x},
\end{align}
leading to
\begin{align}\label{eq:sigma}
	\abs{\D f_j(P)} \leq \abs{\D f(P)} + \frac{1}{2A_j j^2}\abs{P}\overset{\eqref{eq:LinGrowth}}{\leq} c_2+ \frac{1}{2A_j j^2}\abs{P}.
\end{align}
We finally introduce the regularised functionals in terms of $\tilde{u}_j$ by
\begin{align*}
\mathscr F_j [v;\Omega] =  \begin{cases}\displaystyle
	 \mathscr F[v;\Omega] + \frac{1}{A_j j^2} \int_\Omega (1+\abs{\A[\D v] }^2)\dx{x}\quad&\mbox{if}  \quad v\in\mathscr{D}_j\\
	+\infty\quad&\mbox{if}\quad v\in\sobo^{-2,1}(\Omega;\R^2) \setminus \mathscr{D}_j. 
\end{cases}
\end{align*}

\emph{Step 3: The Ekeland approximation.} As already announced at the end of the first step,  \eqref{eq:AlmostAttainment} is crucial to verify the assumption of the Ekeland variational principle. Indeed, we obtain that $\tilde{u}_j$ is $j^{-1}$ close to the infimum of $\F_j[-;\Omega]$ in $\sobo^{-2,1}(\Omega;\R^2)$ because of
\begin{align} \label{eq:CloseToInf_Ekeland}
	\F_j[\tilde{u}_j;\Omega] \leq \F[\tilde{u}_j;\Omega] + \frac{1}{2j^2} 
	&\overset{\eqref{eq:AlmostAttainment}}\leq \inf_{\mathscr{D}_{u_0}} \F[-;\Omega] + \frac{3}{4j^2}  \\
	&\overset{\eqref{eq:InfimaApproximation}}{\leq} \inf_{\mathscr{D}_j} \F[-;\Omega] + \frac{1}{j^2} \leq \inf_{\sobo^{-2,1}(\Omega;\R^2)} \F_j[-;\Omega] + \frac{1}{j^2}\notag .
\end{align}
As $\sobo^{-2,1}(\Omega;\R^2)$ is a Banach space and $\F_j[-;\Omega]$ is clearly not identical $\infty$, as well as lower semi-continuous on $\sobo^{-2,1}(\Omega;\R^2)$ by \Cref{lem:LowerSemicontinuity} with $k=2$ and $\mathfrak{f} = f_j$, the Ekeland variational principle from \Cref{prop:Ekeland} is applicable and provides us with a sequence $(v_j)_{j\in\N}$ of almost-minimisers satisfying
\begin{align}
	\|v_j - \tilde{u}_j \|_{\sobo^{-2,1}(\Omega;\R^2)} &\leq \frac{1}{j} \label{eq:Ekeland_1}\\
	\F_j [v_j; \Omega] &\leq \F_j[w;\Omega] + \frac{1}{j} \|v_j -w\|_{\sobo^{-2,1}(\Omega;\R^2)}\quad\mbox{for all}\quad w\in \sobo^{-2,1}(\Omega;\R^2)\label{eq:Ekeland_2}. 
\end{align}

\pagebreak\noindent 
We proceed by proving useful properties of the Ekeland-type approxmiation sequence:
\begin{prop}[Properties of the Ekeland sequence]\label{prop:EkelandSequence}
	Let $(v_j)_{j\in\N}$ be the Ekeland-type approximiation sequence from above. Then for every $j\in\N$ we have the following estimates:
	\begin{align}
		\int_\Omega \abs{\A[\D v_j]} \dx{x}& \leq \frac{1}{c_1} \bigg(\inf_{\mathscr{D}_{u_0}} \F[-;\Omega] + \frac{2}{j^2}\bigg),\label{eq:EkelandSequence_Property_1}\\
		\frac{1}{2A_j j^2} \int_\Omega (1+\abs{\A[\D v_j]}^2)\dx{x}& \leq \frac{2}{j^2}\label{eq:EkelandSequence_Property_2}. 
	\end{align}
\end{prop}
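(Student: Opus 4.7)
Both bounds will follow from a single master estimate on $\F_j[v_j;\Omega]$ obtained by chaining together the already-proved ingredients. Since $\tilde u_j \in \mathscr{D}_j$ is an admissible competitor, testing the Ekeland almost-minimality \eqref{eq:Ekeland_2} against $w = \tilde u_j$ and plugging in \eqref{eq:Ekeland_1} yields $\F_j[v_j;\Omega] \leq \F_j[\tilde u_j;\Omega] + 1/j^2$. By the very choice of the normalising constant $A_j$ in \eqref{eq:DefinitionOfIntegrands}, the regularisation term evaluated at $\tilde u_j$ is at most $1/(2j^2)$, so $\F_j[\tilde u_j;\Omega] \leq \F[\tilde u_j;\Omega] + 1/(2j^2)$. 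Combining with the near-optimality \eqref{eq:AlmostAttainment} of $\tilde u_j$ produces the master estimate
\[
\F_j[v_j;\Omega] \;\leq\; \inf_{\mathscr{D}_{u_0}} \F[-;\Omega] + \frac{7}{4j^2},
\]
which is the single input both items will exploit.

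\textbf{Proof of (a).} Since the additional quadratic regularisation term is nonnegative, one has $\F[v_j;\Omega] \leq \F_j[v_j;\Omega]$, and the linear growth lower bound $c_1\abs{P} \leq f(P)$ from \eqref{eq:LinGrowth} gives $c_1\int_\Omega \abs{\A[\D v_j]}\dx{x} \leq \F[v_j;\Omega]$. Combining these with the master estimate and dividing through by $c_1$ yields \eqref{eq:EkelandSequence_Property_1}.

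\textbf{Proof of (b).} Rewriting the regularisation contribution as a difference, one has
\[
\frac{1}{2A_j j^2}\int_\Omega (1+\abs{\A[\D v_j]}^2)\dx{x} \;=\; \F_j[v_j;\Omega] - \F[v_j;\Omega].
\]
The point is that the trivial bound $\F[v_j;\Omega] \geq 0$ is \emph{not} strong enough (the right-hand side of the master estimate contains the possibly large quantity $\inf_{\mathscr D_{u_0}} \F[-;\Omega]$, which must be cancelled). Instead, since $\F_j[v_j;\Omega]<\infty$ forces $v_j \in \mathscr{D}_j$, one has $\F[v_j;\Omega] \geq \inf_{\mathscr{D}_j}\F[-;\Omega]$, and the approximation of infima \eqref{eq:InfimaApproximation} gives $\inf_{\mathscr{D}_j}\F[-;\Omega] \geq \inf_{\mathscr{D}_{u_0}}\F[-;\Omega] - 1/(8j^2)$. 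Substituting into the master estimate bounds the right-hand side by $7/(4j^2) + 1/(8j^2) = 15/(8j^2) < 2/j^2$, which is \eqref{eq:EkelandSequence_Property_2}. The main obstacle is thus not conceptual but lies in the bookkeeping of the $1/j^2$ constants and, crucially, in recognising that the Dirichlet class membership $v_j \in \mathscr{D}_j$ must be invoked through \eqref{eq:InfimaApproximation} to cancel the infimum.
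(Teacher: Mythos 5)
Your proof is correct and follows essentially the same route as the paper's: test the Ekeland almost-minimality against $w=\tilde u_j$, chain through \eqref{eq:Ekeland_1}, the construction of $A_j$, and \eqref{eq:AlmostAttainment} to bound $\F_j[v_j;\Omega]$, then read off (a) from the linear-growth lower bound and (b) from the identity $\F_j[v_j]-\F[v_j]$ together with $\F[v_j]\geq\inf_{\mathscr D_j}\F$ and \eqref{eq:InfimaApproximation}. Your bookkeeping yields $\tfrac{15}{8j^2}$ where the paper records $\tfrac{9}{8j^2}$; both are $\leq\tfrac{2}{j^2}$, so the discrepancy is immaterial (and you correctly flag that the naive bound $\F[v_j]\geq 0$ would not suffice).
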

\begin{proof}
	For arbitrary $j\in\N$ we test the almost-minimality condition \eqref{eq:Ekeland_2} with $w=\tilde{u}_j$, which results, after exploiting \eqref{eq:Ekeland_1} and \eqref{eq:CloseToInf_Ekeland}, in
	\begin{align}\label{eq:FinitenessOfFunctionalForEkelandSeq}
		\F_j[v_j;\Omega] \leq \F_j[\tilde{u}_j,\Omega] + \frac{1}{j^2}\leq \inf_{\mathscr{D}_{u_0} }\F[-;\Omega] + \frac{2}{j^2}.
	\end{align}
	In particular, $\F_j[{v_j};\Omega]$ is finite and therefore, \eqref{eq:EkelandSequence_Property_1} follows directly from the lower bound of the linear growth condition \eqref{eq:LinGrowth}. By definition of the functional $\F_j$ we obtain $v_j\in\mathscr{D}_j\cap \sobo^{1,2}(\Omega;\R^2)$ and, using \eqref{eq:FinitenessOfFunctionalForEkelandSeq}, also 
	\begin{align*}
		\inf_{\mathscr{D}_j} \F[-;\Omega] \leq \F[v_j; \Omega] \leq \F_j[v_j;\Omega] \leq 	\inf_{\mathscr{D}_{u_0} }\F[-;\Omega] +\frac{2}{j^2}.
	\end{align*}
	In view of \eqref{eq:InfimaApproximation} we conclude
	\begin{align*}
		\frac{1}{2A_j j^2} \int_\Omega (1+\abs{\A[\D v_j]}^2) \dx{x}  = \left(\F_j[v_j;\Omega] - \F[v_j; \Omega]\right) \leq \frac{9}{8j^2}\le\frac{2}{j^2},
	\end{align*}
	which is exactly the second estimate \eqref{eq:EkelandSequence_Property_2}. This finishes the proof. 
\end{proof}

\noindent Introducing the abbreviation $\sigma_j\coloneqq \D f_j(\A[\D v_j])$, we immediately observe $\A[\sigma_j]=  \sigma_j$, since $\A$ is a projection on $\R^{2\times 2}_\A$ and $\sigma_j\in\R^{2\times 2}_{\A}$. 
The key ingredient to derive regularity of all generalised minimisers will be a Euler-Lagrange inequality, following from the almost-minimality condition \eqref{eq:Ekeland_2}. More precisely, we have 
\begin{thm}[Euler-Lagrange inequality]\label{thm:EulerLagrangeInequality}
	Let $(v_j)_{j\in\N}$ be the Ekeland-type approximation sequence from \Cref{prop:EkelandSequence}. Then the following \emph{Euler-Lagrange inequality} holds true for all $j\in\N$:
	\begin{align}\label{eq:EulerLagrangeInequality}
		\left\vert \int_\Omega \skalarProd{\sigma_j}{\D \varphi}\dx{x}\right\vert =\left\vert \int_\Omega \skalarProd{\sigma_j}{\A[\D \varphi]}\dx{x}\right\vert \leq \frac{1}{j}\|\varphi\|_{\sobo^{-2,1}(\Omega;\R^2)} \qquad \forall\ \varphi\in\sobo^{1,2}_0(\Omega;\R^2).
	\end{align}
\end{thm}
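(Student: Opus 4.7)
The plan is to derive the Euler--Lagrange inequality directly from the Ekeland almost-minimality \eqref{eq:Ekeland_2} by inserting the one-parameter perturbation $w = v_j + t\varphi$ with $t \in \R$ for arbitrary test function $\varphi \in \sobo^{1,2}_0(\Omega;\R^2)$. Since $\abs{\A[\D\varphi]} \leq \abs{\D\varphi}$, the continuous embedding $\sobo^{1,2}_0(\Omega;\R^2) \hookrightarrow \sobo^{\Aop,1}_0(\Omega;\R^2)$ ensures $v_j + t\varphi \in \mathscr{D}_j$ for every $t$, and the quadratic upper bound built into $f_j$ renders $\F_j[v_j + t\varphi;\Omega]$ finite. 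Substituting into \eqref{eq:Ekeland_2} and using $\|t\varphi\|_{\sobo^{-2,1}} = \abs{t}\,\|\varphi\|_{\sobo^{-2,1}}$ yields, after rearrangement,
\begin{equation*}
\F_j[v_j + t\varphi;\Omega] - \F_j[v_j;\Omega] \;\geq\; -\frac{\abs{t}}{j}\,\|\varphi\|_{\sobo^{-2,1}(\Omega;\R^2)} \qquad \forall\, t \in \R.
\end{equation*}

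Dividing by $\abs{t}$ and sending $t \to 0^\pm$ produces the absolute-value bound of \eqref{eq:EulerLagrangeInequality}, once the limit is identified as the Gateaux derivative
\begin{equation*}
\frac{d}{dt}\bigg|_{t=0} \F_j[v_j + t\varphi;\Omega] \;=\; \int_\Omega \langle \sigma_j, \A[\D\varphi]\rangle \dx{x}.
\end{equation*}
For this I would invoke dominated convergence: convexity of $f_j$ makes the difference quotient monotone in $\abs{t}$, and the bound $\abs{\D f_j(P)} \leq c_2 + \frac{1}{A_j j^2}\abs{P}$ from \eqref{eq:sigma}, together with $\A[\D v_j] \in \lebe^2(\Omega)$ guaranteed by \eqref{eq:EkelandSequence_Property_2} and $\A[\D\varphi] \in \lebe^2(\Omega)$, supplies an $\lebe^1$-dominator via Cauchy--Schwarz.

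The first equality in \eqref{eq:EulerLagrangeInequality} is a purely algebraic identity valid pointwise: $\A$ is an orthogonal projection onto $\R^{2\times 2}_\A$, hence self-adjoint with respect to the Frobenius inner product, and $\sigma_j = \D f_j(\A[\D v_j]) \in \R^{2\times 2}_\A$ so $\A\sigma_j = \sigma_j$; therefore $\langle \sigma_j, \D\varphi\rangle = \langle \A\sigma_j, \D\varphi\rangle = \langle \sigma_j, \A[\D\varphi]\rangle$ almost everywhere in $\Omega$. The only delicate point in the argument is the dominated-convergence step used to pass from the difference quotient to the Gateaux derivative; everything else is a direct consequence of the quadratic $\sobo^{1,2}$-regularisation absorbed into $f_j$ and the admissibility of $\sobo^{1,2}_0$-perturbations within the Dirichlet class $\mathscr{D}_j$.
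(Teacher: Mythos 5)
Your proposal is correct and follows essentially the same approach as the paper: perturb $v_j$ by $t\varphi$ in the Ekeland almost-minimality \eqref{eq:Ekeland_2}, divide and pass to the limit to identify the one-sided Gâteaux derivatives, and use that $\A$ is a self-adjoint projection fixing $\sigma_j$ for the algebraic identity. The paper simply asserts the Gâteaux differentials can be computed explicitly, whereas you supply the dominated-convergence justification (convexity for monotonicity of the difference quotient, \eqref{eq:sigma} and $\A[\D v_j],\A[\D\varphi]\in\lebe^2$ for the dominator); this is welcome extra detail but not a different argument.
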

\begin{proof}
	The first relation follows in view of
	\begin{align*}
		\skalarProd{\sigma_j}{\D \varphi}=\skalarProd{\A[\sigma_j]}{\D\varphi}=\skalarProd{\sigma_j}{\A[\D\varphi]}.
	\end{align*}
	The remaining part follows essentially the same steps as in \cite[(4.13)]{Gmeineder} or \cite[Lemma 4.4]{Piotr}. To this end, we consider an arbitrary test function $\varphi\in\sobo^{1,2}_0(\Omega;\R^2)$, which is admissible because of the quadratic growth, cf.~\eqref{eq:DefinitionOfIntegrands}.  For $\theta>0$ we test the almost-minimality condition \eqref{eq:Ekeland_2} with $w=v_j\pm \theta \varphi$. Dividing the resulting inequalities by $\theta$ yields
	\begin{align*}
		\frac{\F_j[v_j\pm \theta\varphi; \Omega]- \F_j [\varphi;\Omega]}{\theta}\leq \frac{1}{j}\|\varphi\|_{\sobo^{-2,1}(\Omega;\R^2)}
	\end{align*}
	and passing to the limit $\theta\searrow 0$, we obtain the two one-sided Gâteaux-differentials of the functional $\F_j$ in directions $\pm\varphi$, which can be computed explicitly. Thus, we can finish the proof:
	\begin{equation*}
		\lim_{\theta\searrow 0} \frac{\F_j[v_j\pm \theta\varphi; \Omega]- \F_j [\varphi;\Omega]}{\theta} = \left.\frac{\dx{}}{\dx{\theta}}\right\vert_{\theta=0} \F_j[v_j \pm \theta \varphi;\Omega] = \pm\int_\Omega  \skalarProd{\sigma_j}{\A[\D \varphi]}\dx{x}.\qedhere
	\end{equation*}
\end{proof}

\subsection{Non-uniform estimates}

In order to justify subsequent computations, we first derive an existence result for second derivatives of $v_j$, which is non-uniformly in $j\in\N$. This can be achieved in a fairly standard manner, by testing the Euler-Lagrange inequality \eqref{eq:EulerLagrangeInequality} with difference quotients. Nevertheless, we have to be more careful as usual, since we deal with differential inequalities instead of equations. 

\begin{lem}[Non-uniform second-order estimates]\label{lem:NonUniformSecEstimates}
	Let $f\in\cont^2(\R^{2\times2}_\A)$ satisfy \eqref{eq:LinGrowth} and for some $\Lambda\in (0,\infty)$ the estimate 
	\begin{align}\label{eq:MuElliptic_UpperBound}
		0\leq\skalarProd{\D^2 f(P)\xi}{\xi}\leq \Lambda \frac{\abs{\xi}^2}{(1+\abs{P}^2)^\frac{1}{2}}\quad\mbox{for all}\quad P,\xi\in\R^{2\times 2}_\A. 
	\end{align}
Moreover, let  $(v_j)_{j\in\N}$ be the Ekeland-type approximation sequence from \Cref{prop:EkelandSequence}.  Then we have $v_j\in\sobo^{2,2}_\loc(\Omega;\R^2)$. 
\end{lem}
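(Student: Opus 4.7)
The strategy is the standard difference--quotient method applied to the Euler--Lagrange inequality \eqref{eq:EulerLagrangeInequality}, with two added twists: one has to work with a one-sided inequality rather than an equation, and the $\sobo^{-2,1}$--dual norm on its right-hand side must be exploited to the full. Fix $j\in\N$; by construction of $\F_j$ we already know $v_j\in\sobo^{1,2}(\Omega;\R^2)$, so that for any $\eta\in\hold^\infty_c(\Omega)$, any direction $s\in\{1,2\}$, and all $|h|$ smaller than $\mathrm{dist}(\mathrm{supp}(\eta),\del\Omega)$, the test function $\varphi\coloneqq-\triangle_{s,-h}(\eta^2\triangle_{s,h} v_j)$ lies in $\sobo^{1,2}_0(\Omega;\R^2)$. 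Discrete integration by parts recasts \eqref{eq:EulerLagrangeInequality} as
\[
	\left|\int_\Omega \skalarProd{\triangle_{s,h}\sigma_j}{\Aop(\eta^2\triangle_{s,h} v_j)}\dx{x}\right| \le \frac{1}{j}\|\triangle_{s,-h}(\eta^2\triangle_{s,h} v_j)\|_{\sobo^{-2,1}(\Omega;\R^2)},
\]
and iterated application of \eqref{eq:EstimateNegSob3} together with $\|\cdot\|_{\sobo^{-1,1}}\le\|\cdot\|_{\lebe^1}$ bounds the right-hand side by $c(\eta)\, j^{-1}\|v_j\|_{\sobo^{1,1}(\Omega;\R^2)}$, which is finite and \emph{uniform in $h$}.

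Turning to the left-hand side, since $f_j\in\hold^2$, the fundamental theorem of calculus produces $\triangle_{s,h}\sigma_j = H_j^h\, \triangle_{s,h}\Aop v_j$ with
\[
H_j^h(x)\coloneqq\int_0^1 \D^2 f_j\bigl(\Aop v_j(x) + t h\,(\triangle_{s,h}\Aop v_j)(x)\bigr)\intd t.
\]
The definition \eqref{eq:DefinitionOfIntegrands} of $f_j$ combined with the non-negativity of $\D^2 f$ and the upper bound \eqref{eq:MuElliptic_UpperBound} yield both the coercivity $\skalarProd{H_j^h\xi}{\xi}\ge\tfrac{1}{A_j j^2}|\xi|^2$ for all $\xi\in\R^{2\times2}_\A$ and the uniform operator bound $|H_j^h|\le\Lambda+\tfrac{1}{A_j j^2}$. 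Using the Leibniz rule $\Aop(\eta^2\triangle_{s,h} v_j)=\eta^2\triangle_{s,h}\Aop v_j+2\eta\,\A[\triangle_{s,h} v_j\otimes\D\eta]$ the inequality above splits into a coercive ``main'' contribution and a cut-off ``error'' involving $\D\eta$. Cauchy--Schwarz and Young's inequality absorb the error into the main term (at the price of a constant depending on $j$, $A_j$, $\Lambda$, $\|\D\eta\|_\infty$), and combined with the control on the right-hand side lead to the uniform-in-$h$ bound
\[
	\frac{1}{2A_j j^2}\int_\Omega \eta^2|\triangle_{s,h}\Aop v_j|^2\dx{x}\le c(j,\eta,\Lambda,v_j).
\]
A standard weak-compactness argument for $h\to 0$ then yields $\del_s\Aop v_j\in\lebe^2_\loc(\Omega;\R^{2\times2}_\A)$ for both $s\in\{1,2\}$, i.e., $\Aop v_j\in\sobo^{1,2}_\loc(\Omega;\R^{2\times 2}_\A)$.

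To upgrade the $\sobo^{1,2}_\loc$-regularity of $\Aop v_j$ to $\sobo^{2,2}_\loc$-regularity of $v_j$ itself, I would invoke \Cref{lem:Cellipt2d}, which furnishes a first-order linear differential operator $\mathbb L$ with $\D^2=\mathbb L\circ\Aop$; then $\D^2 v_j=\mathbb L(\Aop v_j)\in\lebe^2_\loc(\Omega)$ in the distributional sense, concluding $v_j\in\sobo^{2,2}_\loc(\Omega;\R^2)$. The main delicate point is precisely the use of the $\sobo^{-2,1}$-norm on the right-hand side of \eqref{eq:EulerLagrangeInequality}: using instead the apparently more natural $\sobo^{-1,1}$-norm would, by \eqref{eq:EstimateNegSob3}, require control of $\|\triangle_{s,h}\Aop v_j\|_{\lebe^1}$, which in view of Ornstein's non-inequality cannot be traded back for a pure $\lebe^1$-norm of $v_j$ through full-gradient estimates. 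The strong degeneracy of the coercivity constant $\tfrac{1}{A_j j^2}$ causes no obstruction here, since only \emph{non-uniform} second-order bounds are sought.
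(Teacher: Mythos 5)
Your proposal matches the paper's proof essentially step by step: the same difference-quotient test function $\pm\triangle_{s,-h}(\eta^2\triangle_{s,h}v_j)$ in \eqref{eq:EulerLagrangeInequality}, the same integral-averaged Hessian bilinear form with coercivity $1/(A_j j^2)$ from the quadratic regularisation in $f_j$ and upper bound $\Lambda+1/(A_j j^2)$ from \eqref{eq:MuElliptic_UpperBound}, the same Leibniz-rule splitting with Cauchy--Schwarz absorption of the cut-off error, the same $\sobo^{-2,1}\to\sobo^{-1,1}\to\lebe^1$ chain on the right-hand side, and the same invocation of \Cref{lem:Cellipt2d} to pass from $\del_s\Aop v_j\in\lebe^2_\loc$ to $\D^2 v_j\in\lebe^2_\loc$. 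One small imprecision in your closing remark: for this \emph{non-uniform} lemma the $\sobo^{-1,1}$-norm would in fact also yield a finite (merely $j$-dependent) bound, since $\|\triangle_{s,-h}(\eta^2\triangle_{s,h}v_j)\|_{\sobo^{-1,1}}\lesssim\|\D v_j\|_{\lebe^1}$ and $v_j\in\sobo^{1,2}(\Omega)$; the Ornstein obstruction you cite is the reason $\sobo^{-2,1}$ is needed later, in the $j$-uniform weighted second-order estimates, not here.
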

\begin{proof}
	The proof is similar to \cite[Lemma 4.2]{Gmeineder}, but for the sake of completeness we give the whole argument. Let $x_0\in\Omega$ be a point and fix two radii $0<r<R<\mathrm{dist}(x_0,\del\Omega)$. Moreover, let $s\in\{1,2\}$,  $0<h<\frac{1}{2}(\mathrm{dist}(x_0,\del\Omega)-R)$ and a localisation function $\varrho\in\hold^\infty_c(\Omega;[0,1])$ with $\chi_{\ball_{r}(x_0)}\leq\varrho\leq \chi_{\ball_{R}(x_0)}$. We test the Euler-Lagrange inequality \eqref{eq:EulerLagrangeInequality} with $\varphi\coloneqq \triangle_{s,-h}(\varrho^2 \triangle_{s,h} v_j)\in\sobo^{1,2}_0(\Omega;\R^2)$ and after a discrete integration by parts we obtain
	\begin{align}\label{eq:EulerLagrange_TestedWithDiffQuot}
		\left\vert \int_\Omega \skalarProd{\triangle_{s,h}\D f_j(\A[\D v_j])}{\A[\D(\varrho^2 \triangle_{s,h}v_j)]}\dx{x} \right\vert \leq\frac{1}{j^2} \|\triangle_{s,-h}(\varrho^2 \triangle_{s,h}v_j)\|_{\sobo^{-2,1}(\Omega;\R^2)}. 
	\end{align}
	By the fundamental theorem of calculus in combination with the chain rule, we observe
	\begin{align*}
		\triangle_{s,h} \D f_j(\A[\D v_j])(x) 
		 = \int_0^1 \D^2 f_j\big(\A[\D v_j](x) + th \triangle_{s,h} \A[\D v_j](x)\big)\dx{t} \,\triangle_{s,h} (\A[\D v_j])(x),
	\end{align*}
	which motivates for $\Leb^n$-a.e. $x\in \ball_{r}(x_0)$ the definition of bilinear forms 
	$\mathscr{B}_{j,s,h}(x):\R^{2\times 2}_\A \times \R^{2\times 2}_\A\to \R$
	through
	 \begin{align*}
	 	\mathscr{B}_{j,s,h}(x)[\eta,\xi]\coloneqq \int_0^1 \skalarProd{\D^2 f\big(\A[\D v_j] (x) + th \triangle_{s,h}(\A[\D v_j])(x)\big)\eta }{\xi}\dx{t}\quad\mbox{for all}\quad \eta,\xi\in\R^{2\times 2}_\A. 
	 \end{align*}
 	We note that each $\mathscr{B}_{j,s,h}(x)$ is an elliptic bilinear form, since, by the very definition of $f_j$ from \eqref{eq:DefinitionOfIntegrands}, together with \eqref{eq:MuElliptic_UpperBound}, the following estimates hold:
 	\begin{align}\label{eq:BoundsBilinearform}
 		\frac{\abs{\xi}^2}{A_j j^2} \leq \mathscr{B}_{j,s,h}(x) [\xi,\xi] \leq \bigg(\Lambda + \frac{1}{A_j j^2}\bigg) \abs{\xi}^2. 
 	\end{align}
 	In particular, a suitable version of the Cauchy-Schwarz inequality is available. Using the product rule $\A[\D( \varrho^2 \triangle_{s,h}v_j)] = \varrho^2 \A[\D (\triangle_{s,h} v_j)] + \A[2\varrho\nabla\varrho\otimes \triangle_{s,h} v_j]$ and rearranging terms in \eqref{eq:EulerLagrange_TestedWithDiffQuot}, gives 
 	\begin{align*}
 		\mathrm{I} 
 		&\coloneqq \int_\Omega \skalarProd{\triangle_{s,h}\D f_j (\A[\D v_j])}{\varrho^2 \A[\D (\triangle_{s,h} v_j)]}\dx{x} \notag \\
 		& \leq - \int_\Omega  \skalarProd{\triangle_{s,h}\D f_j(\A[\D v_j])}{ \A[2\varrho\nabla\varrho\otimes \triangle_{s,h} v_j]}\dx{x}
 		+\frac{1}{j} \|\triangle_{s,-h}(\varrho^2 \triangle_{s,h} v_j)\|_{\sobo^{-2,1}(\Omega;\R^n)} \\
 		&\eqqcolon \mathrm{II} + \mathrm{III}\notag. 
 		\end{align*}
 	Rewriting term $\mathrm{I}$ and using the lower bound from \eqref{eq:BoundsBilinearform}, we infer
 	\begin{align}\label{eq:BoundBelow_I}
 		\mathrm{I} = \int_\Omega \mathscr{B}_{j,s,h}(x) \big[\varrho \A[\D (\triangle_{s,h} v_j)], \varrho \A[\D (\triangle_{s,h} v_j)]\big] \dx{x} \geq \frac{1}{A_j j^2} \int_\Omega \abs{\varrho \A[\D (\triangle_{s,h} v_j)]}^2 \dx{x}. 
 	\end{align}
 	Moreover, using the Cauchy-Schwarz inequality, we estimate
 	\begin{align*}
 		\mathrm{II} &= -\int_\Omega \mathscr{B}_{j,s,h}(x) \big[\varrho \A[\D (\triangle_{s,h} v_j)], \A[2\nabla \varrho\otimes \triangle_{s,h} v_j]\big]\dx{x} \\
 		&\leq \frac{1}{2} \int_\Omega \mathscr{B}_{j,s,h}(x) \big[\varrho \A[\D (\triangle_{s,h} v_j)], \varrho \A[\D(\triangle_{s,h} v_j)]\big]\dx{x} \\
	 	&\hspace{2cm}+\frac{1}{2} \int_\Omega \mathscr{B}_{j,s,h}(x) \big[ \A[2\nabla \varrho\otimes \triangle_{s,h}v_j], \A[2 \nabla \varrho \otimes \triangle_{s,h} v_j]\big]\dx{x}.  
 	\end{align*}
 	While the first term on the right hand side can be absorbed into $\mathrm{I}$, the second can be treated with the help of the upper bound of \eqref{eq:BoundsBilinearform}. More precisely, we have 
 	\begin{align*}
 		\frac{1}{2} &\int_\Omega \mathscr{B}_{j,s,h}(x) \big[ \A[2\nabla \varrho\otimes \triangle_{s,h}v_j], \A[2 \nabla \varrho \otimes \triangle_{s,h} v_j]\big]\dx{x} \\
 		&\leq 2\bigg(\Lambda + \frac{1}{A_j j^2}\bigg)\|\nabla\varrho\|_{\lebe^\infty(\Omega;\R^2)}^2 \int_{\ball_{R}(x_0)} \abs{\triangle_{s,h} v_j}^2 \dx{x} \\
 		&\leq 2\bigg(\Lambda + \frac{1}{A_j j^2}\bigg)\|\nabla\varrho\|_{\lebe^\infty(\Omega;\R^2)}^2~ \|v_j\|_{\sobo^{1,2}(\Omega;\R^2)}^2,
 	\end{align*}
 	which is finite as $v_j\in\sobo^{1,2}(\Omega;\R^2)$ by construction, cf.~the proof of \Cref{prop:EkelandSequence}. For the last term $\mathrm{III}$ we use the estimate \eqref{eq:EstimateNegSob3} for negative Sobolev spaces, namely we have
 	\begin{align*}
 		\mathrm{III} = \frac{1}{j} \|\triangle_{s,-h}(\varrho^2 \triangle_{s,h} v_j)\|_{\sobo^{-2,1}(\Omega;\R^2)}\leq \frac{1}{j} \|\varrho^2 \triangle_{s,h} v_j\|_{\lebe^1(\Omega;\R^2)} \leq \|\del_s v_j\|_{\lebe^1(\Omega;\R^2)}. 
 	\end{align*}
 	Collecting all the estimates leads to
 	\begin{align*}
 		\frac{1}{A_j j^2} \int_\Omega \abs{\varrho \A[\D (\triangle_{s,h}v_j)]}^2 \dx{x} \leq c, 
 	\end{align*}
 for a constant $c>0$ independent of $h>0$. Therefore, the family $(\triangle_{s,h}(\A[\D v_j]))_{h>0}$ is uniformly bounded in $\lebe^2(\ball_{r}(x_0);\R^{2\times2})$, implying the existence of the derivatives $\del_s \A[\D v_j]$ in $\lebe^2(\ball_{r}(x_0);\R^{2\times2})$ for $s\in\{1,2\}$. Thus, by Lemma \ref{lem:Cellipt2d} we conclude $\D^2 v_j\in\lebe^2(\ball_{r}(x_0);\R^{2\times2\times2})$. Since $x_0\in\Omega$ and $0<r<R<\mathrm{dist}(x_0,\del\Omega)$ were arbitrary, we have shown $v_j\in\sobo^{2,2}_\loc(\Omega;\R^2)$ and the proof is complete. 
\end{proof}
\begin{thm}[Differentiated Euler-Lagrange inequality]\label{thm:DiffEulerLagrangeInequality}
Under the assumptions of \Cref{lem:NonUniformSecEstimates}	it holds for all $k\in\{1,2\}$
	\begin{align*}
		\left\vert \int_\Omega \skalarProd{\partial_k\sigma_j}{\D \varphi}\dx{x}\right\vert =\left\vert \int_\Omega \skalarProd{\partial_k\sigma_j}{\A[\D \varphi]}\dx{x}\right\vert \leq \frac{1}{j}\|\varphi\|_{\sobo^{-1,1}(\Omega;\R^2)} \qquad \forall\ \varphi\in\sobo^{1,2}_0(\Omega;\R^2),.
	\end{align*}
where $(v_j)_{j\in\N}$ is the Ekeland-type approximation sequence. 
\end{thm}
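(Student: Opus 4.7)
The equality of the two expressions follows as in the proof of \Cref{thm:EulerLagrangeInequality} from $\sigma_j=\A[\sigma_j]$: since $\A$ is a constant linear self-adjoint projection, it commutes with $\partial_k$, so $\partial_k\sigma_j=\A[\partial_k\sigma_j]$ and hence $\skalarProd{\partial_k\sigma_j}{\D\varphi}=\skalarProd{\partial_k\sigma_j}{\A[\D\varphi]}$ pointwise. The plan for the main inequality is to ``differentiate'' \eqref{eq:EulerLagrangeInequality} in the direction $\e_k$ by testing it with $-\partial_k\varphi$ for smooth $\varphi$ and then shifting the derivative from $\partial_k\varphi$ onto $\sigma_j$ via integration by parts.

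Concretely, I would first take $\varphi\in\cont^\infty_c(\Omega;\R^2)$, so that $-\partial_k\varphi\in\cont^\infty_c(\Omega;\R^2)\subset\sobo^{1,2}_0(\Omega;\R^2)$ is an admissible test function in \Cref{thm:EulerLagrangeInequality}. This yields
\begin{equation*}
\left\vert\int_\Omega\skalarProd{\sigma_j}{-\D\partial_k\varphi}\dx{x}\right\vert
\le\frac{1}{j}\,\|\partial_k\varphi\|_{\sobo^{-2,1}(\Omega;\R^2)}
\le\frac{1}{j}\,\|\varphi\|_{\sobo^{-1,1}(\Omega;\R^2)},
\end{equation*}
where the last step uses the negative-Sobolev-space estimate $\|\partial_s w\|_{\sobo^{-2,1}}\le\|w\|_{\sobo^{-1,1}}$ recorded in \Cref{sec:Preliminaries}. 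The crucial input from \Cref{lem:NonUniformSecEstimates} is that $v_j\in\sobo^{2,2}_\loc(\Omega;\R^2)$; combined with $\D f_j\in\cont^1$, this gives $\sigma_j=\D f_j(\A[\D v_j])\in\sobo^{1,2}_\loc(\Omega;\R^{2\times 2}_\A)$, which legitimises the integration by parts
\begin{equation*}
-\int_\Omega\skalarProd{\sigma_j}{\D\partial_k\varphi}\dx{x}
=-\int_\Omega\skalarProd{\sigma_j}{\partial_k\D\varphi}\dx{x}
=\int_\Omega\skalarProd{\partial_k\sigma_j}{\D\varphi}\dx{x}
\end{equation*}
with no boundary contribution, since $\varphi$ has compact support in $\Omega$. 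Combining the two displays yields the desired estimate for every $\varphi\in\cont^\infty_c(\Omega;\R^2)$.

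It remains to extend the inequality to $\varphi\in\sobo^{1,2}_0(\Omega;\R^2)$ by density. For such $\varphi$, I pick $(\varphi_n)\subset\cont^\infty_c(\Omega;\R^2)$ with $\varphi_n\to\varphi$ in $\sobo^{1,2}_0(\Omega;\R^2)$; the continuous embedding $\sobo^{1,2}_0(\Omega)\hookrightarrow\sobo^{-1,1}(\Omega)$ then gives $\varphi_n\to\varphi$ also in $\sobo^{-1,1}$. Applying the $\cont^\infty_c$-estimate just obtained to the difference $\varphi_n-\varphi_m$ shows that $\int_\Omega\skalarProd{\partial_k\sigma_j}{\D\varphi_n}\dx{x}$ is Cauchy in $\R$, so its limit extends the linear functional $\varphi\mapsto\int_\Omega\skalarProd{\partial_k\sigma_j}{\D\varphi}\dx{x}$ to $\sobo^{1,2}_0(\Omega;\R^2)$ with the claimed bound (using $\partial_k\sigma_j\in\lebe^2_\loc(\Omega;\R^{2\times 2})$ to identify the extension with the classical pairing on any subset compactly contained in $\Omega$). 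The main obstacle is thus pinpointed to the integration by parts step, which rests crucially on the non-uniform regularity from \Cref{lem:NonUniformSecEstimates}; since this bound depends on $j$ and cannot be passed to the limit $j\to\infty$, one must use it at each fixed $j$ before combining the $\cont^\infty_c$-argument with the density extension.
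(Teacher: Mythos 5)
Your argument is correct and matches the paper's own (very terse) proof, which just says to test the Euler--Lagrange inequality \eqref{eq:EulerLagrangeInequality} with $\partial_s\varphi$ for $\varphi\in\hold^\infty_c(\Omega;\R^2)$ and then approximate, referring to Gmeineder's Lemma 4.4 for details. You have filled in exactly those details: the projection identity $\A[\partial_k\sigma_j]=\partial_k\sigma_j$, the negative-Sobolev bound $\|\partial_k\varphi\|_{\sobo^{-2,1}}\le\|\varphi\|_{\sobo^{-1,1}}$, the local regularity $\sigma_j\in\sobo^{1,2}_\loc$ coming from $v_j\in\sobo^{2,2}_\loc$ via \Cref{lem:NonUniformSecEstimates} (which justifies the integration by parts with no boundary term), and the density extension to $\sobo^{1,2}_0(\Omega;\R^2)$.
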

\begin{proof}
	Since $\A[\sigma_j]=  \sigma_j$ we also have
	\begin{align}\label{eq:DerivativeSigma}
		\A[\partial_k\sigma_j]=\partial_k\A[\sigma_j]=\partial_k\sigma_j.
	\end{align}
	To conclude, we use $\del_s \varphi$ with $\varphi\in\hold^\infty_c(\Omega;\R^2)$ and $s\in\{1,2\}$ as a test-function in \eqref{eq:EulerLagrangeInequality}  and then apply an approximation argument like in \cite[Lemma 4.4]{Gmeineder}.
\end{proof}
\subsection{Uniform weighted second-order estimates}

As a  key ingredient in the proof of our main theorem \Cref{thm:MainTheorem} we will need  weighted second-order estimates for the Ekeland-type approximation sequence $(v_j)_{j\in\N}$, which are uniformly in $j\in\N$. Namely we have:

\begin{thm}[Weighted second-order estimates] Let $\A:\R^{2\times 2}\to\R^{2\times2}$ be an (orthogonal) projection such that the induced differential operator on $\R^2$ from $\R^2$ to $\R^{2\times2}$ given via $\Aop\coloneqq \A[\cdot\otimes \nabla]$ is $\C$-elliptic. Then there exists a constant $c=c(\Lambda, \A)>0$ such that
\begin{align}\label{eq:WeightedSecondOrder}
&\sum_{k=1}^2 \int_{\ball_{2r}(x_0)}\varrho^4 \skalarProd{\D^2f_j(\A[\D v_j])\partial_k\A[\D v_j]}{\partial_k\A[\D v_j]}\intd x\notag \\
&\hspace{0.5cm}\leq \frac{c(\Lambda,\A)}{r^2} \bigg[\int_{\ball_{2r}(x_0)} \abs{\A [\D v_j]}\dx{x}\\
&\hspace{2cm}+ \frac{1}{A_j j^2}\int_{\ball_{2r}(x_0)} (1+\abs{\A [\D v_j]}^2)\dx{x}
+ \bigg(1+\frac{r^2}{j}+\frac{r^3}{j} \bigg)\int_{\ball_{2r}(x_0)} \frac{\abs{v_j}}{r}\dx{x} \bigg]\notag,
\end{align}
where $\varrho\in\hold^\infty_c(\Omega;\R)$ is a localisation function with $\chi_{\ball_{r}(x_0)}\le\varrho\le\chi_{\ball_{2r}(x_0)}$, $\abs{\nabla  \varrho} \leq \frac2r$ and $\abs{\D^2\varrho}\le\frac{4}{r^2}$.
\end{thm}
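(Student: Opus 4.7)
The plan is to test the differentiated Euler--Lagrange inequality from \Cref{thm:DiffEulerLagrangeInequality} with $\varphi = \varrho^4 \partial_k v_j$ for each $k\in\{1,2\}$ and sum in $k$. The choice is admissible in $\sobo^{1,2}_0(\Omega;\R^2)$ by \Cref{lem:NonUniformSecEstimates}, which provides $v_j\in\sobo^{2,2}_\loc(\Omega;\R^2)$. Via the product rule $\A[\D(\varrho^4\partial_k v_j)] = \varrho^4\,\partial_k\A[\D v_j] + 4\varrho^3\A[\nabla\varrho\otimes\partial_k v_j]$ combined with $\partial_k\sigma_j = \D^2 f_j(\A[\D v_j])\,\partial_k\A[\D v_j]$, the principal summand reproduces precisely the weighted quadratic form on the left-hand side of \eqref{eq:WeightedSecondOrder}, so the task reduces to controlling a coupling term and the $\sobo^{-1,1}$-error $\frac{1}{j}\sum_k\|\varrho^4\partial_k v_j\|_{\sobo^{-1,1}}$ coming from the right-hand side of the Euler--Lagrange inequality.

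The $\sobo^{-1,1}$-error is handled by writing $\varrho^4\partial_k v_j = \partial_k(\varrho^4 v_j) - 4\varrho^3(\partial_k\varrho)\,v_j$ and invoking $\|\partial_k(\cdot)\|_{\sobo^{-1,1}}\leq\|\cdot\|_{\lebe^1}$ together with $|\nabla\varrho|\leq 2/r$, producing after rescaling exactly the $\big(\tfrac{r^2}{j}+\tfrac{r^3}{j}\big)\tfrac{1}{r^2}\int|v_j|/r\,dx$ contribution in the target. The coupling term $4\sum_k\int\varrho^3\langle\partial_k\sigma_j,\A[\nabla\varrho\otimes\partial_k v_j]\rangle\,dx$ is then reorganised via Young's inequality for the positive-semidefinite bilinear form $\D^2 f_j(\A[\D v_j])$, which absorbs one half of the target left-hand side back to the left. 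Using $|\A[\nabla\varrho\otimes\partial_k v_j]|\leq c(\A)|\nabla\varrho||\partial_k v_j|$ together with the upper bound from \eqref{eq:MuElliptic_UpperBound} augmented by the quadratic regularisation part, the remaining term is dominated by
$$\frac{c(\Lambda,\A)}{r^2}\int\varrho^2\,\frac{|\D v_j|^2}{\sqrt{1+|\A[\D v_j]|^2}}\,dx \;+\; \frac{c(\A)}{A_j\,j^2\,r^2}\int\varrho^2|\D v_j|^2\,dx.$$

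The main obstacle is now to recast these quadratic-in-$\D v_j$ expressions into the admissible scales $\int|\A[\D v_j]|$, $\frac{1}{A_j j^2}\int(1+|\A[\D v_j]|^2)$ and $\int|v_j|/r$, and it is precisely at this step that the two-dimensional $\C$-elliptic structure enters decisively: Ornstein's non-inequality rules out any direct pointwise control of $|\D v_j|$ by $|\A[\D v_j]|$, but the decomposition \eqref{eq:complementarypart} gives $\D v_j = \mathfrak{L}(\A[\D v_j]) + \gamma(\D v_j)\mathfrak{G}$. The $\mathfrak{L}$-contribution is pointwise bounded by $|\A[\D v_j]|^2$ and, combined with the elementary estimate $x^2/\sqrt{1+x^2}\leq 1+x$, produces directly the $\tfrac{1}{r^2}\int|\A[\D v_j]|$- and $\frac{1}{A_j j^2 r^2}\int(1+|\A[\D v_j]|^2)$-contributions. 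For the $\gamma(\D v_j)\mathfrak{G}$-contribution, since $\mathfrak{G}$ is constant and $\gamma$ is linear in the entries of $\D v_j$, I would integrate by parts once to transfer a derivative onto $v_j$; the resulting second derivatives of $v_j$ are then rewritten via $\D^2 = \mathbb{L}\circ\Aop$ from \Cref{lem:Cellipt2d} as first-order expressions in $\A[\D v_j]$, and a final weighted Cauchy--Schwarz for the $\D^2 f_j$-bilinear form absorbs the $\partial\A[\D v_j]$-factor back into the left-hand side at the cost of only $\tfrac{c(\Lambda,\A)}{r^2}\int|v_j|/r\,dx$- and $\tfrac{c(\Lambda,\A)}{r^2}\int|\A[\D v_j]|\,dx$-type admissible terms, yielding exactly \eqref{eq:WeightedSecondOrder}.
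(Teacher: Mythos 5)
Your opening moves agree with the paper: you test the differentiated Euler--Lagrange inequality with $\varrho^4\partial_k v_j$, split off the $\sobo^{-1,1}$-error (which produces the $\tfrac{1}{j}(1+\tfrac1r+\tfrac1{r^2})\|v_j\|_{\lebe^1}$-contribution), and are then left with a coupling term involving $\partial_k v_j\otimes\nabla\varrho^4$. However, your treatment of that coupling term contains a genuine gap, and it stems from applying Cauchy--Schwarz for the $\D^2 f_j$-form \emph{before} exploiting the decomposition \eqref{eq:complementarypart}. Once you do Cauchy--Schwarz on the full pairing $\langle\partial_k\sigma_j,\A[\partial_k v_j\otimes\nabla\varrho^4]\rangle$, the adjoint factor $\sigma_j$ has been consumed, and the unabsorbed half is a plain quadratic
\[
\frac{c}{r^2}\int\varrho^2\,\frac{\abs{\D v_j}^2}{\sqrt{1+\abs{\A[\D v_j]}^2}}\,\dx{x}
+\frac{c}{A_j j^2 r^2}\int\varrho^2\abs{\D v_j}^2\,\dx{x}.
\]
Decomposing $\D v_j=\mathfrak{L}(\A[\D v_j])+\gamma(\D v_j)\mathfrak{G}$ inside $\abs{\D v_j}^2$ controls the $\mathfrak{L}$-piece, but the $\gamma$-piece leaves you with $\int\varrho^2\abs{\gamma(\D v_j)}^2 w_j$ where $w_j$ is a \emph{nonlinear weight} depending on $\A[\D v_j]$. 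Your plan to "integrate by parts once and invoke $\D^2=\mathbb{L}\circ\Aop$" does not close here: differentiating under the integral hits the weight $w_j$ and produces $\partial\A[\D v_j]$ with the \emph{wrong} weight, and — more seriously — there is no longer a $\sigma_j$ or $\partial_\ell\sigma_j=\D^2 f_j\,\partial_\ell\A[\D v_j]$ in the integrand to run a "weighted Cauchy--Schwarz for $\D^2 f_j$" against. The naive Cauchy estimate that \emph{is} available yields $\int\abs{v_j}^2$-type terms, not the $\dashint\abs{v_j}/r$-type contribution you claim, and the $\frac{1}{A_j j^2}\int\varrho^2\abs{\gamma(\D v_j)}^2$-piece is precisely the kind of full-gradient $\lebe^2$-quantity that cannot be traded pointwise for $\abs{\A[\D v_j]}^2$.

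The paper's proof avoids this dead end by decomposing the test direction $\partial_k v_j\otimes\nabla\varrho^4=\mathfrak{L}(\A[\D v_j])e_k\otimes\nabla\varrho^4+\gamma(\D v_j)\D(\mathfrak{G}e_k\varrho^4)$ \emph{inside the linear pairing} $\langle\partial_k\sigma_j,\,\cdot\,\rangle$, \emph{before} any Cauchy--Schwarz. The $\mathfrak{L}$-piece is safe to Cauchy--Schwarz because it is already expressed through $\A[\D v_j]$; the $\gamma$-piece is instead handled by integration by parts using the crucial observation that $\mathfrak{G}e_k\otimes\nabla\varrho^4=\D(\mathfrak{G}e_k\varrho^4)$ is a full gradient. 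This keeps the factor $\sigma_j$ intact: after moving derivatives around linearly and invoking \Cref{lem:Cellipt2d}, one recovers $\partial_\ell\sigma_j=\D^2 f_j\,\partial_\ell\A[\D v_j]$ so that the $\D^2 f_j$-Cauchy--Schwarz is once more available, and the leftover boundary-type pieces are controlled by $\abs{\sigma_j}\lesssim 1$, yielding only $\int\abs{\A[\D v_j]}$, $\tfrac{1}{A_j j^2}\int(1+\abs{\A[\D v_j]}^2)$ and $\tfrac{1}{j}\|v_j\|_{\lebe^1}$. The order of operations — decompose first, then Cauchy--Schwarz only the $\mathfrak{L}$-piece, then integrate the $\gamma$-piece by parts while still paired with $\sigma_j$ — is essential and is what your proposal misses.
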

\begin{proof} From \Cref{lem:NonUniformSecEstimates} we recall $v_j\in \sobo^{2,2}_{\loc}(\Omega;\R^2)$ and therefore, we can apply the product rule to obtain
	\begin{align*}
		\A[\D (\varrho^4 \del_k v_j)] = \A[\del_k v_j\otimes \nabla \varrho^4]+ \varrho^4 \A[\del_k \D v_j].
	\end{align*}

	\pagebreak \noindent This allows to rewrite 
	\begin{align}\label{eq:A+B}
		\int_\Omega &\varrho^4\skalarProd{\D^2 f_j(\A[\D v_j])\del_k \A[\D v_j]}{\partial_k\A[\D v_j]}\intd x\notag \\
		&= -\int_\Omega \skalarProd{\D^2 f_j(\A[\D v_j])\del_k \A[\D v_j]}{\A[\del_k v_j \otimes \nabla \varrho^4]}\intd x \notag \\
		&\hspace{2cm} +\int_\Omega \skalarProd{\D^2 f_j (\A[\D v_j])\del_k \A[\D v_j]}{ \A[\D(\varrho^4\del_k v_j)]}\dx{x}\notag \\
		&=-\int_{\Omega}\skalarProd{\partial_k\sigma_j}{\A[\del_k v_j \otimes \nabla \varrho^4]}\intd x+\int_{\Omega}\skalarProd{\partial_k\sigma_j}{\A[\D(\varrho^4\del_k v_j)]}\dx{x}\eqqcolon \mathrm{A} + \mathrm{B}.  
	\end{align}
	The second integral $\mathrm{B}$ can be handled immediately using the differentiated Euler-Lagrange inequality from \Cref{thm:DiffEulerLagrangeInequality}, once noticing that $\varrho^4 \del_k v_j\in \sobo^{1,2}_0(\Omega;\R^2)$. Therefore, we infer  
	\begin{align}\label{eq:estimateB}
		\abs{\mathrm{B}}
		&\leq \frac{1}{j}\|\varrho^4 \del_k v_j \|_{\sobo^{-1,1}(\Omega;\R^2)}\notag \\
		&\leq \frac{1}{j}\| \del_k (\varrho^4 v_j) -v_j \del_k \varrho^4\|_{\sobo^{-1,1}(\Omega;\R^2)}
		\leq \frac{1}{j} \bigg(1+\frac{8}{r}\bigg)\|v_j\|_{\lebe^1(\ball_{2r}(x_0);\R^2)}. 
	\end{align}
		The treatment of $\mathrm{A}$ is more subtle, as the expression $\del_k v_j \otimes \nabla \varrho^4$ is, in general, not a full a gradient. In view of \eqref{eq:DerivativeSigma} it holds
	\begin{align*}
		\mathrm{A} &= -\int_\Omega \skalarProd{\del_k \sigma_j}{\A[\del_k v_j\otimes \nabla \varrho^4]}\dx{x}
		=-\int_\Omega \skalarProd{\del_k \sigma_j}{\del_k v_j\otimes \nabla \varrho^4}\dx{x}.
	\end{align*}
Since $\A$ induces a $\mathbb{C}$-elliptic differential operator in two spatial dimensions, the representation \eqref{eq:complementarypart} yields
	\begin{align}\label{eq:Decomposition0}
		\del_k v_j \otimes \nabla \varrho^4  \notag
		&= (\D v_j)e_k\otimes \nabla \varrho^4\\
	&=\mathfrak{L}(\A[\D v_j])e_k \otimes \nabla \varrho^4 + \gamma(\D v_j) \mathfrak{G}e_k\otimes \nabla \varrho^4\notag\\
	&=\mathfrak{L}(\A[\D v_j])e_k \otimes \nabla \varrho^4 + \gamma(\D v_j) \D(\mathfrak{G}e_k \varrho^4).
	\end{align}
This allows to split $\mathrm{A}$ into two parts, namely
	\begin{align*}
		\abs{\mathrm{A}} 
		&\leq \left\vert \int_{\ball_{2r}(x_0)}\skalarProd{\del_k \sigma_j}{\mathfrak{L}(  \A[\D v_j])e_k\otimes \nabla \varrho^4}\dx{x}\right\vert + \left\vert \int_{\ball_{2r}(x_0)} \skalarProd{\del_k \sigma_j}{\gamma(\D v_j) \D(\mathfrak{G}e_k \varrho^4)}\dx{x}\right\vert 
		\eqqcolon \mathrm{A}_1 + \mathrm{A}_2.
	\end{align*}
	\noindent \emph{Ad $\mathrm{A}_1$.}
	Using the Cauchy-Schwarz inequality, which is applicable by \eqref{eq:BoundsBilinearform}, as well as the linearity of  $\mathfrak{L}\colon \R^{2\times2}\to \R^{2\times2}$, we obtain 
	\begin{align*}
		\mathrm{A}_1 &= \int_{\ball_{2r}(x_0)} \skalarProd{\D^2 f_j(\A[\D v_j])\del_k \A[\D v_j]}{\mathfrak{L}(\A[\D v_j])e_k \otimes \nabla\varrho^4}\dx{x} \\
		&\leq \frac{1}{2} \int_{\ball_{2r}(x_0)}  \skalarProd{\D^2 f_j (\A[\D v_j])\del_k \A[\D v_j]}{\del_k \A[\D v_j]}\dx{x} \\
		&\hspace{2cm} + \frac{1}{2}\int_{\ball_{2r}(x_0)}  \skalarProd{\D^2 f_j(\A[\D v_j])\mathfrak{L}(\A[\D v_j])e_k\otimes \nabla\varrho^4}{\mathfrak{L}(\A[\D v_j])e_k\otimes \nabla\varrho^4}\dx{x}.
	\end{align*}
	Now the first term can be absorbed into the left-hand side of \eqref{eq:A+B}, while the second can be controlled, because of the upper bound
	\begin{align*}
	\skalarProd{\D^2 f_j (P)\xi}{\xi} \leq \Lambda \frac{\abs{\xi}^2}{(1+\abs{P}^2)^{\frac{1}{2}}} + \frac{1}{A_j j^2}\abs{\xi}^2.
	\end{align*}	
	Therefore, we conclude 
	\begin{align*}
		\int_{\ball_{2r}(x_0)}  &\skalarProd{\D^2 f_j(\A[\D v_j])\mathfrak{L}(\A[\D v_j])e_k\otimes \nabla\varrho^4}{\mathfrak{L}(\A[\D v_j])e_k\otimes \nabla\varrho^4}\dx{x}\\
		&\leq \frac{\Lambda}{2}\int_{\ball_{2r}(x_0)} \frac{\abs{\mathfrak{L}(\A[\D v_j])e_k\otimes \nabla\varrho^4}^2}{(1+\abs{\A[\D v_j] }^2)^{\frac{1}{2}}}\dx{x}
		+ \frac{1}{2A_j j^2 }\int_{\ball_{2r}(x_0)}  \abs{\mathfrak{L}(\A[\D v_j])e_k\otimes \nabla\varrho^4}^2\dx{x}\\
		&\leq \frac{c(\Lambda, \A)}{r^2} \int_{\ball_{2r}(x_0)}  \abs{\A[\D v_j]}\dx{x} + \frac{c(\A)}{ A_j j^2 r^2} \int_{\ball_{2r}(x_0)}  (1+\abs{\A[\D v_j]}^2) \dx{x}. 
	\end{align*}
	In view of \Cref{prop:EkelandSequence} both terms can be estimated.

\emph{Ad $\mathrm{A}_2$.} The second term $\mathrm{A}_2$ deserves a more thorough analysis. In view of the linearity of $\gamma$ (so that $\gamma(\cdot)=\skalarProd{Q}{\cdot}$ for a suitable $Q\in\R^{2\times2}$) and since $v_j\in \sobo^{2,2}_{\loc}(\Omega;\R^2)$ by \Cref{lem:NonUniformSecEstimates}, we can integrate by parts:
\begin{align}
 &\int_{\ball_{2r}(x_0)} \skalarProd{\del_k \sigma_j}{\gamma(\D v_j) \D(\mathfrak{G}e_k \varrho^4)}\dx{x}\notag \\ 
 &= -\int_{\ball_{2r}(x_0)}\skalarProd{\sigma_j}{\skalarProd{Q}{\partial_k\D v_j}\D(\mathfrak{G}e_k \varrho^4)}\dx{x}
 -\int_{\ball_{2r}(x_0)}\skalarProd{\sigma_j}{\gamma(\D v_j)\D(\mathfrak{G}e_k \partial_k\varrho^4)} \dx{x}\\
 & \eqqcolon \mathrm{A}_{2_{a}} + \mathrm{A}_{2_{b}} \notag
\end{align}

\noindent To treat the second integral $\mathrm{A}_{2_{b}}$, we first utilise the product rule to deduce
\begin{align}
 \gamma(\D v_j)\D(\mathfrak{G}e_k \partial_k\varrho^4) &= \D(\gamma(\D v_j)\mathfrak{G}e_k \partial_k\varrho^4) - \mathfrak{G}e_k \partial_k\varrho^4 \otimes \nabla(\gamma(\D v_j))\notag \\
 &= \D(\gamma(\D v_j)\mathfrak{G}e_k \partial_k\varrho^4) - \mathfrak{G}e_k \partial_k\varrho^4 \otimes \begin{pmatrix}\skalarProd{Q}{\partial_1\D v_j}\\[1ex]\skalarProd{Q}{\partial_2\D v_j} \end{pmatrix},
\end{align}
where in the last step we have used again the linearity of  $\gamma$. Hence, in view of Lemma \ref{lem:Cellipt2d} there exist constant coefficients linear maps $\mathcal{L}_\ell:\R^{2\times2}\to\R^{2\times2}$ such that we have
\begin{align}\label{eq:intparts6573}
 \gamma(\D v_j)\D(\mathfrak{G}e_k \partial_k\varrho^4) &= \D(\gamma(\D v_j)\mathfrak{G}e_k \partial_k\varrho^4) -\partial_k\varrho^4\sum_{\ell=1}^2 \partial_\ell\left(\mathcal{L}_{\ell}\big(\A[\D v_j]\big)\right).\end{align}
 First note, that the first term can be treated using the differentiated Euler-Lagrange inequality from \Cref{thm:DiffEulerLagrangeInequality}, since we have a full gradient in the second argument of the scalar product. Thus, by the boundedness of $\varrho$ and the linearity of $\gamma\colon \R^{2\times2}\to\R$, we obtain 
	\begin{align*}
		&\abs{\int_{\ball_{2r}(x_0)}\skalarProd{\sigma_j}{\D(\gamma(\D v_j)\mathfrak{G}e_k \partial_k\varrho^4)}\dx{x}}\\
		&\hspace{1cm}\le \frac1j\norm{\gamma(\D v_j)\mathfrak{G}e_k \partial_k\varrho^4}_{\sobo^{-1,1}({\ball_{2r}(x_0)};\R^{2})} \\
		&\hspace{1cm} =\frac1j\norm{\sum_{\ell,m=1}^2Q^{(\ell m)}\partial_m v_j^{(\ell)}\mathfrak{G}e_k \partial_k\varrho^4}_{\sobo^{-1,1}({\ball_{2r}(x_0)};\R^{2})}\\
		&\hspace{1cm}\le \frac1j\sum_{m=1}^2\norm{\partial_m\left((Q^\top v_j)^{(m)}\mathfrak{G}e_k \partial_k\varrho^4\right)-(Q^\top v_j)^{(m)}\mathfrak{G}e_k \partial_m\partial_k\varrho^4}_{\sobo^{-1,1}({\ball_{2r}(x_0)};\R^{2})}\\
		&\hspace{1cm}\leq \frac{c(\A)}{j}\left(\frac1r+\frac{1}{r^2}\right)\|v_j\|_{\lebe^1({\ball_{2r}(x_0)};\R^2)}.  
	\end{align*}
Note that, by integration by parts, the second term arising from \eqref{eq:intparts6573} satisfies:

 \begin{align}\label{eq:badpartA2b}
  &\abs{\sum_{\ell=1}^2 \int_{\ball_{2r}(x_0)}\partial_k\varrho^4\skalarProd{\sigma_j}{\partial_\ell\left(\mathcal{L}_{\ell}\big(\A[\D v_j]\big)\right)}\dx{x}} \notag \\
   &\hspace{1cm}\le \abs{\sum_{\ell=1}^2 \int_{\ball_{2r}(x_0)}\partial_k\varrho^4\skalarProd{\partial_{\ell}\sigma_j}{\mathcal{L}_{\ell}\big(\A[\D v_j]\big)}\dx{x}} \\
    &\hspace{2cm} +c(\A) \int_{\ball_{2r}(x_0)} \abs{\sigma_j}\abs{\A[\D v_j]}\abs{\D^2 \varrho^4}\dx{x} \eqqcolon\mathrm{A}_{2_{b}}' + \mathrm{A}_{2_{b}}'' .\notag
 \end{align}
 For the second integral $\mathrm{A}_{2_{b}}''$ we immediately obtain by \eqref{eq:sigma}
 \begin{align}
  \mathrm{A}_{2_{b}}'' \le \frac{c(\A)}{r^2}\int_{\ball_{2r}(x_0)}\abs{\A[\D v_j]}\dx{x} + \frac{c(\A)}{A_jj^2r^2}\int_{\ball_{2r}(x_0)}\abs{\A[\D v_j]}^2\dx{x},
 \end{align}
and in view of \Cref{prop:EkelandSequence} both terms can be estimated. Furthermore, by \eqref{eq:BoundsBilinearform}, we can apply the Cauchy-Schwarz inequality to estimate the first integral $\mathrm{A}_{2_{b}}'$:

\pagebreak
\begin{align*}
  &\sum_{\ell=1}^2\int_{\ball_{2r}(x_0)}\partial_k\varrho^4\skalarProd{\partial_\ell\sigma_j}{\mathcal{L}_{\ell}\big(\A[\D v_j]\big)}\dx{x} \\
  &\hspace{1cm}= \sum_{\ell=1}^2 \int_{\ball_{2r}(x_0)}\skalarProd{\D^2f_j(\A[\D v_j])\varrho^2\partial_\ell\A[\D v_j]}{4\varrho\partial_k\varrho\mathcal{L}_{\ell}\big(\A[\D v_j]\big)}\dx{x}\notag \\
  &\hspace{1cm}\le \epsilon\sum_{\ell=1}^2 \int_{\ball_{2r}(x_0)}\skalarProd{\D^2f_j(\A[\D v_j])\varrho^2\partial_\ell\A[\D v_j]}{\varrho^2\partial_\ell\A[\D v_j]}\dx{x}\\
  &\hspace{2cm}+\frac{16}{\epsilon}\sum_{\ell=1}^2 \int_{\ball_{2r}(x_0)}\skalarProd{\D^2f_j(\A[\D v_j])\varrho\partial_k\varrho\mathcal{L}_{\ell}\big(\A[\D v_j]\big)}{\varrho\partial_k\varrho\mathcal{L}_{\ell}\big(\A[\D v_j]\big)}\dx{x}. \notag
\end{align*}
Since we sum over $k$ in \eqref{eq:WeightedSecondOrder}, the first term on the right-hand side can always be absorbed into the left-hand side of \eqref{eq:WeightedSecondOrder} for $\epsilon=\epsilon(\A)>0$ sufficiently small,  while the second can be controlled, due to the upper bound
	\begin{align*}
	\skalarProd{\D^2 f_j (P)\xi}{\xi} \leq \Lambda \frac{\abs{\xi}^2}{(1+\abs{P}^2)^{\frac{1}{2}}} + \frac{1}{A_j j^2}\abs{\xi}^2.
	\end{align*}	
	Therefore, we conclude 
\begin{align}
 \frac{16}{\epsilon}\int_{\ball_{2r}(x_0)}&\skalarProd{\D^2f_j(\A[\D v_j])\varrho\partial_k\varrho\mathcal{L}_{\ell}\big(\A[\D v_j]\big)}{\varrho\partial_k\varrho\mathcal{L}_{\ell}\big(\A[\D v_j]\big)}\dx{x} \notag\\
 &\le \frac{c(\Lambda,\A)}{r^2}\int_{\ball_{2r}(x_0)}\frac{\abs{\A[\D v_j]}^2}{(1+\abs{\A[\D v_j]}^2)^{\frac12}}\dx{x}+\frac{c(\A)}{A_jj^2r^2}\int_{\ball_{2r}(x_0)}\abs{\A[\D v_j]}^2\dx{x} \notag\\
 &\le \frac{c(\Lambda,\A)}{r^2}\int_{\ball_{2r}(x_0)}\abs{\A[\D v_j]}\dx{x}+\frac{c(\A)}{A_jj^2r^2}\int_{\ball_{2r}(x_0)}(1+\abs{\A[\D v_j]}^2)\dx{x}
\end{align}
In view of \Cref{prop:EkelandSequence} both terms can again be estimated, and the estimation of $\mathrm{A}_{2_{b}}$ is now complete.

\noindent It remains to estimate the first integral $\mathrm{A}_{2_{a}}$. However, in view of Lemma \ref{lem:Cellipt2d} there exist constant coefficients linear maps $\widetilde{\mathcal{L}_\ell}, \widehat{\mathcal{L}_{\ell}}:\R^{2\times2}\to\R^{2\times2}$  such that for $\mathrm{A}_{2_{a}}$ we obtain:
\begin{align}
\mathrm{A}_{2_{a}} = \int_{\ball_{2r}(x_0)}\skalarProd{\sigma_j}{\sum_{\ell=1}^2 \partial_\ell\left(\widetilde{\mathcal{L}_{\ell}}\big(\A[\D v_j]\big)\right)}\varrho^3\partial_1\varrho + \skalarProd{\sigma_j}{\sum_{\ell=1}^2 \partial_\ell\left(\widehat{\mathcal{L}_{\ell}}\big(\A[\D v_j]\big)\right)}\varrho^3\partial_2\varrho\dx{x}.
\end{align}
We again integrate by parts and argue as in the estimation of $\mathrm{A}_{2_{b}}$, cf. \eqref{eq:badpartA2b}. Summarising, we infer 
 \begin{align*}
 	\sum_{k=1}^2\int_{{\ball_{2r}(x_0)}} &\varrho^4\skalarProd{\D^2 f_j(\A[\D v_j])\del_k \A[\D v_j]}{\partial_k\A[\D v_j]}\intd x\\
 	&\leq \frac{c(\A)}{j}\bigg(1+\frac{1}{r}+\frac{1}{r^2}\bigg)\|v_j\|_{\lebe^1({\ball_{2r}(x_0)};\R^2)} + \frac{c(\Lambda,\A)}{r^2} \,\|\A[\D v_j]\|_{\lebe^1({\ball_{2r}(x_0)};\R^2)} \\
 	&\hspace{2cm}+ \frac{c(\A)}{A_j j^2r^2}\int_{{\ball_{2r}(x_0)}}(1+\abs{\A[\D v_j]}^2)\dx{x}.
 \end{align*}
 Since $\A[\cdot\otimes\nabla]$ is a first-order $\C$-elliptic differential operator we use the Poincar\'e-type inequality from  \cite[Lemma 2.12]{GmRa1} together with an approximation argument to estimate the $\lebe^1$-norm of $v_j$ by the $\lebe^1$-norm of $\A[\D v_j]$. More precisely, taking into account \eqref{eq:EstimateMollification}, we deduce
 \begin{align}\label{eq:L1-bound_Ekeland_C_elliptic}
 	\|v_j\|_{\lebe^1(\Omega;\R^2)} &\leq \|v_j-u_j^{\del\Omega}\|_{\lebe^1(\Omega;\R^2)} + \|u_j^{\del\Omega} \|_{\lebe^1(\Omega;\R^2)} \notag \\
 		&\leq c(\Omega)\,\| \Aop v_j-\Aop u_j^{\del\Omega}\|_{\lebe^1(\Omega;\R^{2\times 2})}+\|u_j^{\del\Omega}\|_{\lebe^1(\Omega;\R^2)} \notag \\
 		&\leq c(\Omega)\bigg(\|\Aop v_j\|_{\lebe^1(\Omega;\R^2)} + \|u_j^{\del\Omega}\|_{\sobo^{\Aop,1}(\Omega)}\bigg)\\
 		&\leq c(f,u_0,\Omega) \left[\frac{1}{c_1}\left(\inf_{\mathscr{Di}_{u_0}} \F[-;\Omega] + \frac{2}{j^2}\right)+\frac{1}{j^2}\right]\notag .
\end{align}

\pagebreak Finally, collecting all the estimates from above results in
 \begin{align*}
  	&\int_{\ball_{2r}(x_0)} \varrho^4\skalarProd{\D^2 f_j(\A[\D v_j])\del_k \A[\D v_j]}{\partial_k\A[\D v_j]}\intd x\\
  	&\hspace{0.5cm}\leq \frac{c(\Lambda,\A)}{r^2} \bigg[\int_{\ball_{2r}(x_0)} \abs{\A [\D v_j]}\dx{x} + \frac{1}{A_j j^2}\int_{\ball_{2r}(x_0)} (1+\abs{\A [\D v_j]}^2)\dx{x}\\ 
  	&\hspace{2cm}+ \bigg(\frac{r^3}{j}+\frac{r^2}{j} +\frac{r}{j}\bigg)\int_{\ball_{2r}(x_0)} \frac{\abs{v_j}}{r}\dx{x} \bigg]\\
  	&\leq \frac{c(\Lambda,\A)}{r^2} \bigg[\int_{\ball_{2r}(x_0)} \abs{\A [\D v_j]}\dx{x} + \frac{1}{A_j j^2}\int_{\ball_{2r}(x_0)} (1+\abs{\A [\D v_j]}^2)\dx{x}\\ 
  	&\hspace{2cm}+ \bigg(1+\frac{r^2}{j}+\frac{r^3}{j} \bigg)\int_{\ball_{2r}(x_0)} \frac{\abs{v_j}}{r}\dx{x} \bigg],
 \end{align*}
where we have used the inequality $\frac{r}{j} \leq 1+\frac{r^2}{j}$ in the last line. This yields the desired estimate and finishes the proof. 
\end{proof}

\begin{rem}
	Assuming in addition, that the integrand $f$ is $\mu$-elliptic in the sense of \eqref{eq:MuEllipticity} we can bound \eqref{eq:WeightedSecondOrder} from below to obtain the following weighted second-order estimates: 
	\begin{align}\label{eq:WeightedSecondOrder_MuElliptic}
		&\int_{\ball_{2r}(x_0)} \varrho^4 (1+\abs{\A[\D v_j]}^2)^{-\frac{\mu}{2}} \abs{\D\A[\D v_j]}^2\dx{x} \notag \\
		&\hspace{0.5cm}\leq \sum_{k=1}^2\int_{\ball_{2r}(x_0)} \varrho^4\skalarProd{\D^2 f_j(\A[\D v_j])\del_k \A[\D v_j]}{\partial_k\A[\D v_j]}\intd x \notag \\
		&\hspace{0.5cm}\leq \frac{c(\Lambda,\A)}{r^2} \bigg[\int_{\ball_{2r}(x_0)} \abs{\A [\D v_j]}\dx{x}\\
		&\hspace{2cm}+ \frac{1}{A_j j^2}\int_{\ball_{2r}(x_0)} (1+\abs{\A [\D v_j]}^2)\dx{x}
		+ \bigg(1+\frac{r^2}{j}+\frac{r^3}{j} \bigg)\int_{\ball_{2r}(x_0)} \frac{\abs{v_j}}{r}\dx{x} \bigg]\notag\\
		&\hspace{0.5cm}\leq 
		\frac{c}{r^2}\left(1+\frac{r^2}{j}+\frac{r^3}{j}\right)\left[\frac{1}{c_1}\left(\inf_{\mathscr{D}_{u_0}} \F[-;\Omega] + \frac{2}{j^2}\right)+\frac{1}{j^2}\right] + \frac{c}{r^2 j^2}\notag ,
	\end{align}
	for a constant $c=c(c_1,c_2,\lambda,\Lambda,\A,u_0,\Omega)>0$.
\end{rem}

\begin{rem}\label{rem:dev1}
	The previous proof strongly relies on the structure of first-order $\C$-elliptic differential operators in two dimensions. Indeed, the presented arguments \eqref{eq:A+B} and \eqref{eq:estimateB} are applicable for any first-order differential operator $\A[\cdot\otimes\nabla]$ in all dimensions. However, we were able to estimate the remaining term $\mathrm{A}$ by exploiting the decomposition \eqref{eq:complementarypart}, which relies on the $\C$-ellipticity in two dimensions. Indeed, whenever such a decomposition persists, we can follow the above arguments. Let us demonstrate it for the deviatoric (trace-free) gradient in all dimensions. In this framework, $\A=\dev:\R^{n\times n}\to \R^{n\times n}$ is given by
	\[
	\dev P\coloneqq P -\frac{\tr P}{n}\,\bbone_n,
	\]
	so that $\mathfrak{L}=\mathrm{id}_{\R^{n\times n}}$, $\gamma(P)=\displaystyle\frac{\skalarProd{P}{\bbone_n}}{n}$ and $\mathfrak{G}=\bbone_n$ for $P\in\R^{n\times n}$. Hence, we can repeat the above steps for the particular choice $\A=\dev$ in all dimensions. Note, that $\dev\D$ is a $\C$-elliptic differential operator in all dimensions whose complementary part is always one-dimensional. It is precisely this structure, that was used in the proof for $\C$-elliptic differential operators in $2$d which possess a one-dimensional so called \emph{almost complementary part}, cf. \cite[Proposition 4.1]{GLN1}. This situation changes drastically if one focuses on the trace-free symmetric gradient. Note, that the trace-free symmetric gradient is not $\C$-elliptic in $2$d and already in $3$d the complementary part of the trace-free symmetric gradient is no longer one-dimensional. However, the symmetric gradient $\varepsilon=\sym\D$ case in all dimensions was treated in \cite{GK,Gmeineder,BEG}. 
\end{rem}

\subsection{Korn inequalities in Orlicz spaces for $\C$-elliptic differential operators} \label{sec:Korn_in_Orlicz} 
In this section we investigate into Korn-type inequalities on the scale of Orlicz-spaces. We begin by mentioning several known results.  In \cite{BreitDiening_KornOrlicz} \textsc{Breit} and \textsc{Diening} prove that an inequality of the form  
\begin{equation}\label{eq:Korn_Remark}
	\|\D u\|_{\lebe^\Phi(\Omega;\R^{n\times n})}\lesssim \|\sym \D u\|_{\lebe^\Phi(\Omega;\R^{n\times n}_{\rm sym})}
\end{equation}
can only hold for all $u\in\hold^\infty_c(\R^n;\R^{n})$ if and only if $\Phi\in \nabla_2\cap\Delta_2$, which was extended to more general elliptic operators by \textsc{Conti} and \textsc{Gmeineder} \cite{ContiGmeineder}. This result is somehow natural as $\Phi\in\Delta_2$ and $\Phi\in\nabla_2$ loosely speaking mean that the norm $\|\cdot\|_{\lebe^\Phi}$ is not \emph{too close} to $\|\cdot\|_{\lebe^\infty}$ and $\|\cdot\|_{\lebe^1}$ respectively, for which the Korn inequality fails. Moreover, it is possible to weaken the assumption $\Phi\in\nabla_2\cap \Delta_2$ by replacing the norm on the right-hand side of \eqref{eq:Korn_Remark} by a slightly weaker Orlicz norm due to results of \textsc{Cianchi} \cite{Cianchi} for the symmetric gradient.
Before establishing the needed Korn-type inequalities in Orlicz spaces, we catch up with the proof of Lemma \ref{lem:Cellipt2d}:
\begin{proof}[Proof of Lemma \ref{lem:Cellipt2d}]
 Our proof relies on the special structure of $\C$-elliptic differential operators on $\R^2$, see \eqref{eq:complementarypart} and is motivated by construction of the corresponding linear maps $\mathfrak{L}$ and $\gamma$, as well as of the matrix $\mathfrak{G}\in\mathrm{GL}(2)$, cf. the proof of \cite[Prop. 4.1]{GLN1}. We distinguish the only two possible cases:
 
 If $\dim{\A[\R^{2\times2}]}=4$ then $\{\e_i\otimes_\Aop\e_j\}_{i,j=1,2}$ form a basis of $\A[\R^{2\times2}]$ and the linear map $\mathfrak{L}$ can, e.g., be defined via the action
 \begin{align*}
  \mathfrak{L}(\e_i\otimes_\Aop\e_j)\coloneqq \e_i\otimes\e_j-\bbone_2, \qquad \text{for } i,j=1,2.
 \end{align*}
With this choice we obtain for $u\in\hold^\infty_c(\R^2;\R^2)$:
\begin{align*}
 \mathfrak{L}(\A[\D u])=\begin{pmatrix} 
                       -(\partial_1 u_2 +\partial_2 u_1 + \partial_2 u_2) & \partial_1 u_2 \\ \partial_2 u_1 & -(\partial_1u_1+\partial_1u_2+\partial_2 u_1)
                        \end{pmatrix}.
\end{align*}
Hence, we could already express all the entries of the gradient $\D u$ by a linear combination of the entries of $\mathfrak{L}(\A[\D u])$:
\begin{align*}
 \partial_1u_1=  -\mathfrak{L}(\A[\D u])^{(22)}-\mathfrak{L}(\A[\D u])^{(12)} - \mathfrak{L}(\A[\D u])^{(21)}, \quad\ldots
\end{align*}
and could even take $d=1$ in this case. But then also all the entries of $\D^2 u$ can be expressed as a linear combination of the entries of $\D\mathfrak{L}(\A[\D u])$.

The more sophisticated case is when $\dim{\A[\R^{2\times2}]}=3$. Then, like in the proof of \cite[Prop. 4.1]{GLN1}, without loss of generality, we find coefficients $a_{11}, a_{12}, a_{22}$ not all equal to zero such that
\begin{align*}
 \e_2\otimes_{\Aop}\e_1=a_{11}\e_1\otimes_\Aop \e_1 + a_{12}\e_1\otimes_{\Aop}\e_2+a_{22}\e_2\otimes_{\Aop}\e_2
\end{align*}
and, cf. \cite[Eq. (4.8)]{GLN1},
\begin{align}\label{eq:detG}
 a_{11}a_{22}+a_{12}\neq0.
\end{align}
In this case, the linear map $\mathfrak{L}$ can be chosen in the following way, cf. \cite[Eq. (4.12) f]{GLN1}:
\begin{align*}
 \mathfrak{L}(\e_i\otimes_\Aop\e_j)=\begin{cases}
                                     \e_i\otimes \e_j & \text{if } (i,j)\neq(2,1),\\[1ex]
                                     \begin{pmatrix}
                                      a_{11} & a_{12} \\ 0 & a_{22}
                                     \end{pmatrix} & \text{if } (i,j)=(2,1).
                                    \end{cases}
\end{align*}
Then
\begin{align*}
 \mathfrak{L}(\A[\D u])=\begin{pmatrix}
                       \partial_1 u_1 + a_{11}\partial_2 u_1 & \partial_1 u_2 + a_{12}\partial_2 u_1 \\ 0 & \partial_2u_2 + a_{22}\partial_2 u_1  
                        \end{pmatrix},
\end{align*}
so that in general we cannot express all the entries of the gradient $\D u$ by a linear combination of the entries of this $\mathfrak{L}(\A[\D u])$. However, differentiating all the entries we obtain
\begin{align*}
 \begin{pmatrix}
  \partial_1 \mathfrak{L}(\A[\D u])^{(11)}\\
  \partial_2 \mathfrak{L}(\A[\D u])^{(11)}\\
  \partial_1 \mathfrak{L}(\A[\D u])^{(12)}\\
  \partial_2 \mathfrak{L}(\A[\D u])^{(12)}\\
  \partial_1 \mathfrak{L}(\A[\D u])^{(22)}\\
  \partial_2 \mathfrak{L}(\A[\D u])^{(22)}
 \end{pmatrix}
 =
 \begin{pmatrix}
  1 & a_{11} & 0 & 0 & 0 & 0\\
  0 & 1 & a_{11} & 0 & 0 & 0\\
  0 & a_{12} & 0 & 1 & 0 & 0\\
  0 & 0 & a_{12} & 0 & 1 & 0\\
  0 & a_{22} & 0 & 0 & 1 & 0\\
  0 & 0 & a_{22} & 0 & 0 & 1
 \end{pmatrix}
 \begin{pmatrix}
  \partial_1^2 u_1\\
  \partial_1\partial_2 u_1 \\
  \partial_2^2u_1\\
  \partial_1^2 u_2\\
  \partial_1\partial_2 u_2 \\
  \partial_2^2u_2
 \end{pmatrix}
\end{align*}
since the determinant of the appearing $6\times6$-matrix is $-(a_{11}a_{22}+a_{12})\neq0$ by \eqref{eq:detG}, we can express all the entries of $\D^2 u$ by a linear combination of the entries of $\D\mathfrak{L}(\A[\D u])$. This completes the proof of Lemma \ref{lem:Cellipt2d}.
 \end{proof}
\begin{rem}\label{rem:dev2}
 The structure from \Cref{lem:Cellipt2d} also applies to the trace-free gradient in all dimensions. More precisely, all the entries of the second derivative of a vector field can be expressed by a linear combination of derivatives of $\Aop=\dev[\cdot\otimes\nabla]$, i.e, in Lemma \ref{lem:Cellipt} we have for this particular choice also $d=2$. Indeed, since this operator only applies on the diagonal elements, the off-diagonal entries remain the same:
 \[
  (\D u)^{(ij)}=(\dev \D u)^{(ij)} \qquad \text{for }i\neq j \in\{1,\ldots,n\}.
 \]
Furthermore, we note that the diagonal elements of $\D u$ cannot be expressed as linear combinations of the entries of the elements of $\dev \D u$. However, this holds true if we increase the order of differentiation:
\begin{align*}
 \partial_i(\dev\D u)^{(ii)}&=\partial_i\left(\partial_i u_i -\frac1n\sum_{j=1}^n\partial_ju_j\right)=\frac{n-1}{n}\partial_i\partial_i u_i -\frac1n\sum_{\stackrel{j=1}{j\neq i}}^n\partial_i\partial_j u_j\\
 &= \frac{n-1}{n}\partial_i(\D u)^{(ii)}-\frac1n\sum_{\stackrel{j=1}{j\neq i}}^n\partial_j(\dev \D u)^{(ji)}.
\end{align*}
Hence, we found a first-order differential operator $\mathbb{L}$ such that
\[
 \D^2 = \mathbb{L}\circ \dev[\cdot\otimes\nabla].
\]
Thus, we can follow the arguments below also in case of the deviatoric gradient in all dimensions and first establish the necessary Korn-type inequalities in Orlicz spaces. In conclusion, we obtain a similar statement as in our main theorem \ref{thm:MainTheorem} for the deviatoric gradient in all dimensions.

\end{rem}

As an immediate consequence we can characterise the nullspace of $\A[\cdot\otimes\nabla]$:
\begin{cor}\label{cor:kernel}
 Let the projection $\A:\R^{2\times 2}\to\R^{2\times2}$ induce a $\C$-elliptic differential operator on $\R^2$ by $\Aop\coloneqq\A[\cdot\otimes\nabla]$. Then $\Aop u = 0$ if and only if
 \begin{align*}
  u(x)=Bx +b \quad \text{with } B\in \ker \A \text{ and } b\in\R^2. \end{align*}
\end{cor}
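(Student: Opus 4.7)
The plan is to derive the corollary as an essentially immediate consequence of \Cref{lem:Cellipt2d}, treating the two implications separately. Since both halves ultimately reduce to manipulating a constant matrix, no serious analytic work is required; the entire weight of the statement is carried by the structural identity $\D^2 = \mathbb{L}\circ\Aop$ established above.

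For the ``if'' direction, I would start from $u(x)=Bx+b$ with $B\in\ker\A$ and $b\in\R^2$. Then $\D u\equiv B$ is constant, so by definition of the induced operator one has $\Aop u = \A[\D u] = \A[B] = 0$, which is trivial.

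For the converse, I would invoke \Cref{lem:Cellipt2d}, which provides a first-order linear constant-coefficient differential operator $\mathbb{L}$ satisfying $\D^2 = \mathbb{L}\circ\Aop$. Applied to $u$, this immediately gives $\D^2 u = \mathbb{L}(\Aop u) = 0$ in the distributional sense. On the (connected) domain under consideration, the vanishing of all second-order distributional derivatives forces $u$ to be affine, say $u(x)=Bx+b$ with some $B\in\R^{2\times 2}$ and $b\in\R^2$. Reading off $\D u\equiv B$ and inserting into the hypothesis yields $\A[B] = \A[\D u] = \Aop u = 0$, so $B\in\ker\A$, which is exactly what was claimed.

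The only (very minor) point deserving a word of care is the passage from $\D^2 u=0$ as a distribution to the affine representation $u(x)=Bx+b$, which is classical and holds for any $u\in\lebe^1_{\mathrm{loc}}$ on a connected open set; it can be justified by mollification or, equivalently, by noting that every first-order distributional derivative of $u$ has zero gradient and is therefore (locally) constant. There is no genuine obstacle, and the proof is really a one-liner once \Cref{lem:Cellipt2d} is in hand.
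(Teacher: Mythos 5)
Your proof is correct and takes exactly the route the paper has in mind: the paper simply says the corollary "follows directly from Lemma \ref{lem:Cellipt2d}," and your write-up supplies precisely the elementary details — using $\D^2=\mathbb{L}\circ\Aop$ to get $\D^2 u=0$, hence $u$ affine, and then reading off $\A[B]=0$. Nothing to change.
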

\begin{proof}
 It follows directly from Lemma \ref{lem:Cellipt2d}.
\end{proof}

 As a next step, we recall the following statement from  {\cite[Theorem 5.1]{Stephan}}, which is is a generalisation of \cite[Theorem 3.1]{Cianchi}.
\begin{thm}[Korn-type inequality in Orlicz spaces]
	Let $\Omega\subset\R^n$, $n\geq 2$ be an open and bounded set and $\Aop$ be a homogeneous, first-order elliptic differential operator of the form \eqref{eq:DifferentialOperator}. Moreover, let $\Psi$ and $\Phi$ be Young functions such that 
	\begin{equation}\label{eq:CianchiAssumptionYoungFunctions}
		t \int_0^t \frac{\Phi(s)}{s^2}\dx{s} \leq \Psi(ct) \quad\mbox{and}\quad t\int_0^t \frac{\Psi^\ast(s)}{s^2}\dx{s}\leq \Phi^\ast(ct),
	\end{equation}
	hold for all $t\geq 0$ and some $c>0$. Then there exist a constant $C>0$ such that 
	\begin{equation}\label{eq:Korn_Type_Orlicz_Paul}
		\int_\Omega \Phi(\abs{\D u})\dx{x} \lesssim \int_\Omega \Psi(C\abs{\Aop u})\dx{x}\quad\mbox{for all}\quad u\in \sobo^{1,\Psi}_0(\Omega;V).
	\end{equation}
\end{thm}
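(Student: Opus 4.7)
The strategy is to reduce the inequality to a sharp Orlicz bound for Calderón--Zygmund singular integrals, which is precisely what the twin conditions \eqref{eq:CianchiAssumptionYoungFunctions} encode. After extending $u\in\sobo^{1,\Psi}_0(\Omega;V)$ by zero to the whole of $\R^n$, both $\D u$ and $\Aop u$ are compactly supported in $\overline{\Omega}$ and belong to the respective Orlicz spaces on $\R^n$, so \eqref{eq:Korn_Type_Orlicz_Paul} will follow once one controls $\D u$ by $\Aop u$ through a zero-order operator amenable to Orlicz estimates.

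The key analytic step is to construct such an operator via Fourier methods. The ellipticity of $\Aop$ ensures that $\Aop[\xi]^\ast\Aop[\xi]\colon V\to V$ is invertible for every $\xi\in\R^n\setminus\{0\}$, so that
\[
m(\xi)\coloneqq \mathrm{i}\,\xi\otimes\bigl(\Aop[\xi]^\ast\Aop[\xi]\bigr)^{-1}\Aop[\xi]^\ast
\]
defines a $\hold^\infty$ multiplier on $\R^n\setminus\{0\}$ which is homogeneous of degree zero. A direct computation on the Fourier side yields the identity $\D u = T(\Aop u)$ on $\R^n$, where $T$ is the Fourier multiplier operator with symbol $m$. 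By the Mihlin--Hörmander theorem, $T$ is a classical Calderón--Zygmund singular integral whose kernel satisfies the standard size and Hörmander cancellation conditions, and hence is bounded on $\lebe^p(\R^n)$ for every $1<p<\infty$.

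The conclusion now follows from Cianchi's sharp Orlicz bound for singular integrals of this type. Under the pair of conditions \eqref{eq:CianchiAssumptionYoungFunctions} one has
\[
\int_{\R^n}\Phi(\abs{Tf})\dx{x}\lesssim \int_{\R^n}\Psi(C\abs{f})\dx{x}
\]
for every admissible $f$; applying this with $f=\Aop u$ and restricting the left-hand integral to $\Omega$ yields \eqref{eq:Korn_Type_Orlicz_Paul} after an approximation by $\hold^\infty_c(\Omega;V)$-functions in the Luxemburg norm. The main obstacle is justifying Cianchi's Orlicz bound for the specific multiplier $T$: the classical $\lebe^p$-theory breaks down at the endpoints $p=1,\infty$, which is exactly the Ornstein-type obstruction discussed in the introduction and the reason a Korn-type inequality cannot hold for arbitrary Young functions. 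The role of \eqref{eq:CianchiAssumptionYoungFunctions} is to quantify the admissible distance from these endpoints, and Cianchi's proof relies on a decreasing-rearrangement inequality for singular integrals combined with a Hardy-type inequality with kernel $1/s$ on $(0,\infty)$, both of which transfer to $T$ once its kernel estimates are in hand.
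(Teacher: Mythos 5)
The paper does not prove this theorem itself; it is imported verbatim from \cite[Theorem 5.1]{Stephan}, which generalises \cite[Theorem 3.1]{Cianchi} from the symmetric gradient to general first-order elliptic operators, so there is no in-paper proof for you to match against. That said, your reconstruction follows exactly the route that Cianchi (and Stephan) take: extend $u$ by zero, write $\D u = T(\Aop u)$ via the zero-homogeneous multiplier built from $(\Aop[\xi]^\ast\Aop[\xi])^{-1}\Aop[\xi]^\ast$, observe that $T$ is a Calder\'on--Zygmund operator, and then invoke the rearrangement/Hardy-inequality machinery for singular integrals in Orlicz spaces, for which the twin conditions \eqref{eq:CianchiAssumptionYoungFunctions} are precisely what is required. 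This is the correct strategy and identifies all the right ingredients.

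Two small remarks. First, for a first-order operator the factors of $\mathrm{i}$ from $\widehat{\D u}(\xi)=\mathrm{i}\,\hat u(\xi)\otimes\xi$ and $\widehat{\Aop u}(\xi)=\mathrm{i}\,\Aop[\xi]\hat u(\xi)$ cancel, so the multiplier should be $m(\xi)=\xi\otimes(\Aop[\xi]^\ast\Aop[\xi])^{-1}\Aop[\xi]^\ast$ without the leading $\mathrm{i}$; this is cosmetic and does not affect the argument. Second, you should note explicitly that what Cianchi's singular-integral lemma produces is a \emph{modular} inequality $\int\Phi(|Tf|)\lesssim\int\Psi(C|f|)$, not merely a Luxemburg-norm bound; this is precisely what the statement asserts, and the passage from $\hold^\infty_c$-test functions to general $u\in\sobo^{1,\Psi}_0(\Omega;V)$ then goes through a standard density argument together with Fatou's lemma applied to the left-hand side. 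With these points made precise, the proof is complete and coincides with the cited source.
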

\noindent As a matter of fact, modular estimates in Orlicz spaces are always stronger then Luxemburg norm. Therefore, in view of \eqref{eq:Korn_Type_Orlicz_Paul}, we obtain for $u\in \sobo^{1,\Psi}(\Omega;V)$ the estimate
\begin{equation}\label{eq:Korn_Type_Orlicz_Paul_Norm}
	\|\D u \|_{\lebe^\Phi(\Omega;\Lin(\R^n;V))} \leq C\, \|\Aop u\|_{\lebe^\Psi(\Omega;W)}.
\end{equation}
We now aim to prove a Korn-type inequality for balls on the scale of Orlicz spaces. Towards this aim, we first need to derive a Poincar\'{e}-type inequality in Orlicz spaces, which will we be the content of the following theorem:

\begin{thm}[Poincar\'e-type inequality]
Let $r>0$, $x_0\in\R^n$ and $\ball_r(x_0)\subset\R^n$ be a ball, $\Aop$ be a $\C$-elliptic operator of the form \eqref{eq:DifferentialOperator} and $\Psi$  be a Young function. Then, for every $u\in \sobo^{\Aop,\Psi}(\ball_r(x_0))$ there exists a constant $c=c(r,n,\A)>0$ such that
\begin{equation}\label{eq:PoincareOrlicz}
	\inf_{\rfrak\in\ker(\Aop)}\|u-\rfrak\|_{\L^\Psi(\ball_r(x_0);V)} \leq c\,\omega_n r\,\|\Aop u\|_{\L^\Psi(\ball_r(x_0);W)}. 
\end{equation}
\end{thm}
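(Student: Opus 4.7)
The plan is to prove this Poincar\'e-type inequality by a contradiction and compactness argument, leveraging the $\C$-ellipticity of $\Aop$ through the finite dimensionality of $\ker\Aop$ together with the associated compact embeddings available in the $\C$-elliptic setting.

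I would begin by assuming the inequality fails. Then for every $j\in\N$ one finds $u_j\in\sobo^{\Aop,\Psi}(\ball_r(x_0))$ with
\[
\inf_{\rfrak\in\ker\Aop}\|u_j-\rfrak\|_{\L^\Psi(\ball_r(x_0))}>j\,\omega_n r\,\|\Aop u_j\|_{\L^\Psi(\ball_r(x_0))}.
\]
Since $\ker\Aop$ is finite dimensional by $\C$-ellipticity (\Cref{cor:kernel} in the projection-induced 2D case and \cite[Thm.~2.6]{BDG} in general), the infimum is attained at some $\rfrak_j$. Normalising by $\alpha_j\coloneqq \inf_\rfrak\|u_j-\rfrak\|_{\L^\Psi}$ and setting $v_j\coloneqq (u_j-\rfrak_j)/\alpha_j$ yields a sequence satisfying $\|v_j\|_{\L^\Psi}\le 2$, $\inf_\rfrak\|v_j-\rfrak\|_{\L^\Psi}=1$, and $\|\Aop v_j\|_{\L^\Psi}<1/(j\omega_n r)\to 0$.

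On a bounded domain one has the continuous embedding $\L^\Psi\hookrightarrow\L^1$, so $(v_j)$ is bounded in $\sobo^{\Aop,1}(\ball_r(x_0))$. Appealing to the compact embedding $\sobo^{\Aop,1}\hookrightarrow\L^1$ valid for $\C$-elliptic operators (\cite{GmRa1}), I would extract a subsequence with $v_j\to v$ in $\L^1(\ball_r(x_0))$. Since $\Aop v_j\to 0$ in $\L^1$, the limit satisfies $\Aop v=0$ in the distributional sense, i.e.~$v\in\ker\Aop$.

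The main obstacle is upgrading this $\L^1$-convergence to $\L^\Psi$-convergence, which is essential for reaching a contradiction. The strategy is to combine the Korn-type inequality in Orlicz spaces stated in the excerpt (following \cite{Stephan,Cianchi}) with the extension operator $E\colon \sobo^{\Aop,\Psi}(\ball_r(x_0))\to\sobo^{\Aop,\Psi}(\R^n)$ for $\C$-elliptic operators provided by \cite[Cor.~4.21]{BDG}: after extending the differences $v_j-v$ to compactly supported functions in $\ball_{2r}(x_0)$, the Orlicz-Korn inequality provides uniform control of $\|\D v_j\|_{\L^\Psi(\ball_r(x_0))}$ modulo $\ker\A$. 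Applying a standard Orlicz-version of the Poincar\'e inequality for the full gradient (obtained from Jensen's inequality combined with the Riesz potential representation of $u-\bar u$) and a Rellich--Kondrachov compact embedding in $\sobo^{1,\Psi}$ would then produce $v_j\to v$ in $\L^\Psi(\ball_r(x_0))$.

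With $\L^\Psi$-convergence in hand, passage to the limit gives $\inf_\rfrak\|v_j-\rfrak\|_{\L^\Psi}\to\inf_\rfrak\|v-\rfrak\|_{\L^\Psi}=0$, since $v\in\ker\Aop$; this contradicts $\inf_\rfrak\|v_j-\rfrak\|_{\L^\Psi}=1$ and completes the argument. The hardest technical point is the compactness upgrade sketched above, where the structural conditions on $\Psi$ needed to apply both the Orlicz-Korn inequality and the Rellich--Kondrachov theorem in $\sobo^{1,\Psi}$ have to be reconciled with the mere Young function hypothesis; any $\Delta_2$-type assumption on $\Psi$ implicitly required by these classical results represents the cost of the general framework.
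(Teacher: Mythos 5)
Your compactness-and-contradiction scheme is a genuinely different route from the paper, and it has a real gap. The paper proves the inequality directly: it invokes the pointwise Riesz-potential estimate $|(u-\Pi^\ball_{\Aop}u)(x)|\le c\int_\ball |x-y|^{1-n}|\Aop u(y)|\dx{y}$ for $\C$-elliptic $\Aop$ from \cite[Prop.~3.8]{DieningGmeineder}, normalises by the mass $m_x$ of that Riesz kernel and by $\lambda=\|\Aop u\|_{\L^\Psi}$, applies Jensen's inequality for the probability measure $\mu_x/m_x$, integrates in $x$ and uses Tonelli, and reads off the bound from the definition of the Luxemburg norm. This is entirely elementary and, crucially, uses nothing about $\Psi$ beyond convexity and $\Psi(0)=0$.

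The gap in your approach is exactly where you flag it: upgrading the $\L^1$-convergence $v_j\to v$ to $\L^\Psi$-convergence. The ingredients you propose for the upgrade (the Orlicz Korn inequality of \cite{Cianchi,Stephan} and a Rellich--Kondrachov compactness statement in $\sobo^{1,\Psi}$) both require structural conditions on $\Psi$ — in the Cianchi/Stephan framework the paired condition \eqref{eq:CianchiAssumptionYoungFunctions}, and in the Korn-inequality-in-norm form of \cite{BreitDiening_KornOrlicz,ContiGmeineder} precisely $\Psi\in\Delta_2\cap\nabla_2$. These are genuine restrictions (e.g.\ $\exp\L^\beta$ is not $\Delta_2$), so the contradiction argument does not prove the theorem as stated for \emph{arbitrary} Young functions, which is the generality required here and actually used later in the paper. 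Moreover your plan already contains the key tool but applies it to the wrong operator: you propose obtaining ``a standard Orlicz-version of the Poincar\'e inequality for the full gradient'' from Jensen plus a Riesz-potential representation — but the Diening--Gmeineder representation does this \emph{directly for $\Aop$}, so invoking Korn to pass through $\D u$ is an unnecessary and more restrictive detour. Finally, the contradiction argument inevitably loses the explicit $\omega_n r$ factor (it yields only an implicit $c(r)$), whereas the paper's direct proof produces it from the lower bound $m_x\ge c\,\omega_n r$ for the Riesz kernel mass.
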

\begin{proof}
	For the sake of simplicity we abbreviate $\ball\coloneqq \ball_r(x_0)$. The result is an immediate consequence from \cite[Proposition 3.8.]{DieningGmeineder}, namely there exists a constant $c=c(\ball,\Aop)>0$ such that for $u\in \sobo^{\Aop,1}(\ball)$ there holds the point-wise estimate
	\begin{equation}\label{eq:PointwiseInequalityPoincare}
		\abs{(u-\Pi^\ball_{\Aop} u)(x)} \leq c\int_\ball \abs{x-y}^{1-n} \abs{\Aop u(y)}\dx{y}\quad\mbox{for  }\Leb^n\mbox{-a.e. } x\in\ball,
	\end{equation}
	where $\Pi^\ball_{\Aop}$ denotes a suitable linear projection onto $\ker(\Aop)$, cf. \cite[Prop. 3.3]{DieningGmeineder}. As a next step we define for $x\in\ball$ a Borel measure $\mu_x\colon\mathscr{B}(\ball)\to [0,\infty)$ via
	\begin{equation*}
		\mu_x (A)\coloneqq c\int_\ball \abs{x-y}^{1-n} \dx{y}\quad\mbox{for}\quad A\in\mathscr{B}(\ball)
	\end{equation*}
	and set $m_x\coloneqq \mu_x(\ball)$. Now, since $\abs{x-y}\leq \diam(\ball) = 2r$ for all $x,y\in \ball$, we can estimate
	\begin{equation}\label{eq:LowerBoundMeasure_Poincare}
		m_x =c \int_\ball \frac{\dx{y}}{\abs{x-y}^{n-1}}\geq c\frac{\Leb^n(\ball)}{(\diam(\ball))^{n-1}} = c\,\omega_n\, r\quad \mbox{for}\quad \Leb^n\mbox{-a.e. }x\in\Omega. 
	\end{equation}
	Dividing \eqref{eq:PointwiseInequalityPoincare} by $m_x$ and $\lambda\coloneqq \|\Aop u\|_{\L^\Psi(\ball;W)}$ and applying the Jensen inequality, we obtain for $\Leb^n$-a.e. $x\in\Omega$
	\begin{align*}
		\Psi\left(\frac{\abs{(u-\Pi^\ball_{\Aop} u)(x)}}{m_x\lambda}\right) 
		\leq \Psi\left(\, \dashint_{\ball}  \frac{\abs{\Aop u(y)}}{\lambda}\dx{\mu_x(y)}\right)
		\leq \dashint_\ball\Psi\left(\frac{\abs{\Aop u(y)}}{\lambda}\right) \dx{\mu_x(y)}
	\end{align*}
	Therefore, integrating the above inequality over $\ball$ with respect to the $x$-variable, in conjunction with the Tonelli theorem, results in 
	\begin{align*}
		\int_\ball \Psi\left(\frac{\abs{(u-\Pi^\ball_{\Aop} u)(x)}}{m_x\lambda}\right) \dx{x} 
		&\leq \int_\ball\,\dashint_\ball \Psi\left(\frac{\abs{\Aop u(y)}}{\lambda}\right) \dx{\mu_x(y)}\dx{x}\\
		&= \int_\ball  \Psi\left(\frac{\abs{\Aop u(y)}}{\lambda}\right)\int_\ball\frac{c}{\abs{x-y}^{n-1}}\frac{1}{m_x}\dx{y}\dx{x}\\
		&= \int_\ball \Psi\left(\frac{\abs{\Aop u(y)}}{\|\Aop u\|_{\L^{\Psi}(\ball;W)}}\right)\dx{y}\\
		&\leq 1,
	\end{align*}
	where the last inequality follows from the fact that $u\in\sobo^{\Aop,\Psi}(\ball)$. Using the lower bound \eqref{eq:LowerBoundMeasure_Poincare} together with the definition of the Luxemburg norm gives 
	\begin{align}\label{eq:Poincare_Projection}
		\|u-\Pi^\ball_{\Aop} u\|_{\L^\Psi(\ball;V)} \leq c\, \omega_n\, r \|\Aop u\|_{\L^\Psi(\ball;W)}, 
	\end{align}
	where $c=c(\Aop,\ball)>0$. This leads to the desired inequality \eqref{eq:PoincareOrlicz}, since we can always bound the left hand side by the infimum taken over all $\rfrak\in\ker(\Aop)$, thus the proof is complete.
	\end{proof}
As a side remark, we refer to \cite[Proposition 4.2]{GmRa1} and \cite[Proposition 3.7]{DieningGmeineder} for Poincar\'e-type inequalities on $\L^p$-spaces for star-shaped or John domains respectively. We proceed and derive the announced Korn-type inequality in Orlicz spaces. 
\begin{thm}[Korn-type inequality]
	Let $r>0$ as well as $\Psi$ and $\Phi$ be Young functions such that \eqref{eq:CianchiAssumptionYoungFunctions} is satisfied. Then for each $\beta>0$ there exists a constant $c=c(\Phi,\Psi,n,r,\Aop)>0$ such that 
	\begin{align}\label{eq:KornType2_Orlicz}
		\inf_{\rfrak\in\ker(\Aop)} \|\D (u-\rfrak)\|_{\L^\Phi(\ball_r(x_0);\Lin(\R^n;V))}\leq c \bigg(1+\frac{1}{r}\bigg)\|\Aop u \|_{\L^\Psi(\ball_{2r}(x_0);W)}
	\end{align}
	for all $u\in\sobo^{\Aop,\Psi}(\ball_{2r}(x_0))$, where $\Aop$ is a  $\C$-elliptic differential operator of the form \eqref{eq:DifferentialOperator}.
\end{thm}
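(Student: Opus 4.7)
The strategy is a standard localisation argument: I would reduce the inequality on $\ball_r(x_0)$ to the already-established Korn-type inequality \eqref{eq:Korn_Type_Orlicz_Paul_Norm} for compactly supported fields on $\R^n$ combined with the Poincar\'e inequality \eqref{eq:PoincareOrlicz} on the larger ball $\ball_{2r}(x_0)$. Concretely, I fix a cutoff $\eta\in\hold^\infty_c(\ball_{2r}(x_0);[0,1])$ with $\eta\equiv 1$ on $\ball_r(x_0)$ and $\|\nabla\eta\|_{\L^\infty}\le c/r$, choose $\rfrak\in\ker(\Aop)$ (which is finite-dimensional, cf.~\Cref{cor:kernel}) so as to nearly realise the infimum in \eqref{eq:PoincareOrlicz} on $\ball_{2r}(x_0)$, and then apply the compactly supported Korn inequality to the field $\eta(u-\rfrak)$.

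More precisely, since $\eta(u-\rfrak)\in\sobo^{\Aop,\Psi}_0(\ball_{2r}(x_0))$ extends by zero to an element of $\sobo^{\Aop,\Psi}(\R^n)$, the inequality \eqref{eq:Korn_Type_Orlicz_Paul_Norm} (which holds for compactly supported functions with a constant depending only on $\Phi,\Psi,n,\Aop$) yields
\[
\|\D(u-\rfrak)\|_{\L^\Phi(\ball_r(x_0))}\le \|\D(\eta(u-\rfrak))\|_{\L^\Phi(\R^n)}\le C\,\|\Aop(\eta(u-\rfrak))\|_{\L^\Psi(\ball_{2r}(x_0))},
\]
using that $\eta\equiv 1$ and $\nabla\eta=0$ on $\ball_r(x_0)$ so that $\D(\eta(u-\rfrak))=\D(u-\rfrak)$ there. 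Next, the product rule together with $\Aop\rfrak=0$ (by the definition of $\rfrak$) gives
\[
\Aop(\eta(u-\rfrak))=\eta\,\Aop u+\A[(u-\rfrak)\otimes\nabla\eta],
\]
so a triangle inequality combined with the $\L^\infty$-bound on $\nabla\eta$ leads to
\[
\|\Aop(\eta(u-\rfrak))\|_{\L^\Psi(\ball_{2r}(x_0))}\le \|\Aop u\|_{\L^\Psi(\ball_{2r}(x_0))}+\frac{c(\A)}{r}\,\|u-\rfrak\|_{\L^\Psi(\ball_{2r}(x_0))}.
\]
Feeding in the Poincar\'e estimate \eqref{eq:PoincareOrlicz} on $\ball_{2r}(x_0)$, the last summand is dominated by $c(\A,n)\,\|\Aop u\|_{\L^\Psi(\ball_{2r}(x_0))}$, so absorbing and passing to the infimum over $\rfrak\in\ker(\Aop)$ gives the claim, with the explicit factor $(1+1/r)$ appearing naturally from the cutoff gradient bound.

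The main subtlety is ensuring that the constant in \eqref{eq:Korn_Type_Orlicz_Paul_Norm} does not depend on the domain. This is painless here because $\eta(u-\rfrak)$ is compactly supported: extending by zero one works on $\R^n$ and the resulting constant depends only on $\Phi,\Psi,n,\Aop$. A secondary technical point is that the Poincar\'e inequality \eqref{eq:PoincareOrlicz} requires a meaningful $\ker(\Aop)$-projection; this is available by \Cref{cor:kernel}, which identifies $\ker(\Aop)$ with $\{x\mapsto Bx+b:B\in\ker\A,\ b\in\R^2\}$, a finite-dimensional space so that nearly optimal $\rfrak$ can be selected. Beyond these points the argument is routine and the explicit dependence $(1+1/r)$ follows directly from the scaling of $\|\nabla\eta\|_{\L^\infty}$.
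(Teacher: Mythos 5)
Your proposal is correct and follows essentially the same route as the paper's: fix a cutoff $\eta$ equal to $1$ on $\ball_r(x_0)$ and supported in $\ball_{2r}(x_0)$, apply the Korn-type inequality for compactly supported functions \eqref{eq:Korn_Type_Orlicz_Paul_Norm} to $\eta(u-\rfrak)$, expand $\Aop(\eta(u-\rfrak))$ via the product rule with $\Aop\rfrak=0$, and absorb the lower-order cutoff term by the Poincar\'e inequality \eqref{eq:PoincareOrlicz}. The only cosmetic difference is that you invoke the compactly supported Korn inequality on $\R^n$ (arguing for a domain-independent constant) whereas the paper simply applies \eqref{eq:Korn_Type_Orlicz_Paul_Norm} on the fixed larger ball $\ball_{5r}(x_0)$; both are fine since the final constant is anyway allowed to depend on $r$.
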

\begin{proof}
	Let $\varrho\in\hold^\infty_c(\R^n;[0,1])$ be a cut-off function satisfying $\chi_{\ball_{r}(x_0)}\leq \varrho\leq \chi_{\ball_{2r}(x_0)}$ and $\abs{\nabla\varrho}\leq\frac{2}{r}$. Moreover, we recall from \eqref{eq:Poincare_Projection} that the  Poincar\'e-type inequality 
	\begin{equation}\label{eq:Poincare_KornProof}
		\|u-\Pi_\Aop^\ball u\|_{\L^\Psi(\ball_{2r}(x_0);V)} \leq c\,\omega_n\, r\|\Aop u\|_{\L^\Psi(\ball_{2r}(x_0);W)},
	\end{equation}
	holds true. Therefore, taking into account \eqref{eq:Korn_Type_Orlicz_Paul_Norm} on $\Omega=\ball_{5r}(x_0)$ applied for the function $\varrho(u-\Proj_\ball u)\in\sobo^{1,\Psi}_0(\ball_{5r}(x_0);V)$ in conjunction with \eqref{eq:Poincare_KornProof}, leads to 
	\begin{align*}
		\|\D (u-\Pi_\Aop^\ball u)\|_{\L^\Phi(\ball_{r}(x_0))} 
		&\leq \|\D (\varrho(u-\Pi_\Aop^\ball u))\|_{\L^\Phi(\ball_{5r}(x_0)}\\
		& \leq c \,\|\Aop (\varrho(u-\Pi_\Aop^\ball u))\|_{\L^\Psi(\ball_{5r}(x_0);W)}\\
		&\leq c \,\bigg( \frac1r\left\|u-\Pi_\Aop^\ball u\right\|_{\L^\Psi(\ball_{2r}(x_0);V)} +  \|\Aop u\|_{\L^\Psi(\ball_{2r}(x_0); W)}\bigg)\\
		&\leq c\,\bigg(1+\frac{1}{r}\bigg) \|\Aop u\|_{\L^\Psi(\ball_{2r}(x_0);W)}.
	\end{align*}
	This yields the desired inequality \eqref{eq:KornType2_Orlicz}, as we can always bound the left-hand side by the infimum taken over all $\rfrak\in\ker(\Aop)$, which finishes the proof. 
\end{proof}

\pagebreak \noindent As an immediate consequence, cf.~also \cite [Example 3.11]{Cianchi}, we observe 
\begin{cor}\label{cor:KornType_Orlicz_C_Elliptic}
	Let $r>0$, $x_0\in\R^n$ and $\ball_r(x_0)\subset\R^n$ be a ball. Then for all $\beta>0$ there exists a constant $c=c(\beta,r,n)>0$ such that
	\begin{align}\label{eq:exponentialKorn_type}
		\inf_{\rfrak\in\ker(\Aop)} \|\D (v-\rfrak)\|_{\exp\L^{\frac{\beta}{\beta+1}}(\ball_r(x_0);\Lin(\R^n;V))}
		\leq c\,\bigg(1+\frac{1}{r}\bigg)\|\Aop v \|_{\exp\L^{\beta}(\ball_{2r}(x_0);W)}
	\end{align}
	for all $v\in\exp\L^\beta(\ball_{2r}(x_0);V)$ with $\Aop \in \exp\L^\beta(\ball_{2r}(x_0);W$) 
	, where $\Aop$ is a $\C$-elliptic operator of the form \eqref{eq:DifferentialOperator}.
\end{cor}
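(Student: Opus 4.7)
The plan is to deduce the corollary directly from the Korn-type inequality \eqref{eq:KornType2_Orlicz} by exhibiting the exponential Orlicz spaces as an admissible Young-function pair in the sense of \eqref{eq:CianchiAssumptionYoungFunctions}. Concretely, I would choose $\Psi(t)\coloneqq\exp(t^{\beta})-1$ (equivalent to $\Phi_\beta$ up to a bounded perturbation, which is irrelevant for the Luxemburg norm) and $\Phi(t)\coloneqq\exp(t^{\beta/(\beta+1)})-1$, so that $\lebe^{\Psi}=\exp\lebe^\beta$ and $\lebe^{\Phi}=\exp\lebe^{\beta/(\beta+1)}$. Once the two conditions in \eqref{eq:CianchiAssumptionYoungFunctions} are checked for this pair, the previous theorem produces a constant $c=c(\beta,r,n,\Aop)>0$ such that the Korn-type bound \eqref{eq:KornType2_Orlicz} immediately yields \eqref{eq:exponentialKorn_type}.

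The remaining task is therefore the verification of the compatibility conditions \eqref{eq:CianchiAssumptionYoungFunctions}. For the first one, namely $t\int_0^t \Phi(s)s^{-2}\dx{s}\leq\Psi(Ct)$, I would fix $\alpha\coloneqq\beta/(\beta+1)\in(0,1)$ and use the primitive
\begin{equation*}
\frac{\dx{}}{\dx{s}}\!\left(\frac{\exp(s^\alpha)}{s^{\alpha}}\right)=\frac{\exp(s^\alpha)(\alpha s^\alpha-\alpha)}{s^{\alpha+1}}
\end{equation*}
to obtain, by elementary asymptotic analysis, $\int_1^t s^{-2}\exp(s^\alpha)\dx{s}\lesssim \exp(t^\alpha)\,t^{-\alpha-1}$ for $t\geq 1$; multiplying by $t$ gives $t^{-\alpha}\exp(t^\alpha)$, which is dominated by $\exp(C^\beta t^\beta)-1=\Psi(Ct)$ for large $t$ since $\beta>\alpha$, and a straightforward $\Delta_2$-type adjustment handles the small-$t$ regime. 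For the dual condition, I would compute the Young conjugates; standard asymptotics yield $\Psi^\ast(s)\sim s(\log s)^{1/\beta}$ and $\Phi^\ast(s)\sim s(\log s)^{(\beta+1)/\beta}$ as $s\to\infty$, and the inequality $t\int_0^t s^{-1}(\log s)^{1/\beta}\dx{s}\lesssim t(\log t)^{1+1/\beta}=t(\log t)^{(\beta+1)/\beta}$ follows at once from integration by parts. Consequently both conditions in \eqref{eq:CianchiAssumptionYoungFunctions} are satisfied, as stated without proof in \cite[Example 3.11]{Cianchi}.

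With the Young-function pair verified, I would apply \eqref{eq:KornType2_Orlicz} on $\ball_{2r}(x_0)$ to a function $v\in\exp\L^\beta(\ball_{2r}(x_0);V)$ with $\Aop v\in\exp\L^\beta(\ball_{2r}(x_0);W)$; this is admissible because $\exp\L^\beta(\ball_{2r}(x_0))\hookrightarrow\L^{\Psi}(\ball_{2r}(x_0))$ coincides with $\L^{\Psi}$ up to equivalent norms, and analogously on the target side. Taking the infimum over $\rfrak\in\ker(\Aop)$ on the left-hand side, which, by \Cref{cor:kernel}, is the finite-dimensional space of affine maps $x\mapsto Bx+b$ with $B\in\ker\A$, yields the claimed bound \eqref{eq:exponentialKorn_type} with the prefactor $(1+1/r)$ inherited from \eqref{eq:KornType2_Orlicz}.

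The only genuinely non-trivial step is the verification of the second (conjugate) condition in \eqref{eq:CianchiAssumptionYoungFunctions}, since the behaviour of the Young conjugates of exponential functions is somewhat delicate near the origin, but this is essentially routine and already noted in Cianchi's example; beyond that, the corollary is truly a one-line application of the Orlicz Korn inequality established above.
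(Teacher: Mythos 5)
Your proposal matches the paper's proof exactly: the paper obtains \Cref{cor:KornType_Orlicz_C_Elliptic} as an immediate consequence of the Korn-type inequality \eqref{eq:KornType2_Orlicz} by taking $\Psi=\Phi_\beta$ and $\Phi=\Phi_{\beta/(\beta+1)}$, and simply cites \cite[Example 3.11]{Cianchi} for the admissibility of this pair with respect to \eqref{eq:CianchiAssumptionYoungFunctions}. You go one step further and actually verify the two compatibility conditions, which is a useful supplement even if the "primitive"-based derivation of $\int_1^t s^{-2}\exp(s^\alpha)\dx{s}\lesssim t^{-\alpha-1}\exp(t^\alpha)$ is a bit opaque (the substitution $u=s^\alpha$ reduces it to the standard asymptotic $\int_1^T u^{-\gamma}e^u\dx{u}\sim T^{-\gamma}e^T$ and is more transparent).
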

\begin{rem}[Korn-type inequality on Lipschitz domains]
	It is possible to establish  a version of \eqref{eq:KornType2_Orlicz} without increasing the domains of integration on the right-hand side.
	In order to do so, one can follow the lines of the proof of \cite[Thm. 3.3]{Cianchi}.
	The crucial idea to establish Korn-type inequalities is Sobolev's integral representation formula cf. \cite[Sec. 1.1.10]{Mazya}, namely for all $u\in\hold^\infty_c(\R^2;\R^2)$ it holds
	\begin{align}\label{eq:representation}
		u=\frac{1}{\pi}\sum_{\abs{\alpha}=2}\frac{1}{\alpha!}\frac{x^\alpha}{\abs{x}^2}\ast \partial^\alpha u.
	\end{align}
	If the projection $\A:\R^{2\times 2}\to\R^{2\times2}$ induce by $\Aop\coloneqq\A[\cdot\otimes\nabla]$ a $\C$-elliptic differential operator on $\R^2$, then by our Lemma \ref{lem:Cellipt2d} we can express $\partial^\alpha u$ as linear combination 
	\begin{align*}
		\partial^\alpha u = \partial_1 \ell_1(\A[\D u]) + \partial_2 \ell_2(\A[\D u]), \qquad \abs{\alpha}=2,
	\end{align*}
	with suitable linear maps $\ell_j:\R^{2\times 2} \to\R^2$ for $j=1,2$. Thus, reinserting in \eqref{eq:representation} and integrating by parts we obtain
	\begin{align*}
		u=-\frac{1}{\pi}\sum_{\abs{\alpha}=2}\frac{1}{\alpha!}\sum_{j=1}^2\partial_j\frac{x^\alpha}{\abs{x}^2}\ast \ell_j(\A[\D u]),
	\end{align*}
to arrive at a similar statement as in \cite[Eq. (6.6)]{Cianchi}.
Furthermore, since our induced differential operator $\Aop$ is $\C$-elliptic, its kernel is finite dimensional, cf. also Corollary \ref{cor:kernel}, so that we can also deduce Korn-type inequalities of the second type in Orlicz spaces. 
First, via an Orlicz version of the Poincaré's inequality involving the $\C$-elliptic differential operator $\Aop$, we have
	\begin{equation}\label{eq:Poincare2.0}
		\inf_{v\in\mathscr{K}}\norm{u-v}_{\exp\lebe^\beta(\Omega;\R^2)}\le c(\beta,\Aop,\Omega)\norm{\Aop u}_{\exp\lebe^\beta(\Omega;\R^{2\times 2})}
	\end{equation}
	for every $u\in \lebe^{\Phi_\beta}(\Omega;\R^2)$ with $\Aop u\in\lebe^{\Phi_\beta}(\Omega;\R^{2\times2})$ where $\mathscr K$ denotes the finite dimensional nullspace of $\Aop$
	\begin{equation*}
		\mathscr K \coloneqq \{ v(x)=Bx+b, B\in\ker \A, b \in\R^2 \},
	\end{equation*}
 cf.~\Cref{cor:kernel}. 
	Let us shortly comment how we can again follow the lines of the proof of \cite[Thm 3.3]{Cianchi} emphasising the needed adjustments: The first ingredient is Smith's representation formula on cones for $\C$-elliptic differential operators \cite[Thm I]{Smith}. The next crucial step is to construct an extension operator like in \cite[Ch II Rem 1.3, Thm 1.2, Thm 2.2]{Temam} which relies on a limiting Sobolev type inequality \cite[Eq. (1.32)]{Temam}. The latter result has been generalised by \textsc{Van Schaftingen} \cite[Thm 1.3]{VS} and reads in our situation
	\begin{equation}\label{eq:limitigSobo}
		\norm{u}_{\lebe^{2}(\R^2;\R^2)}\le c(\Aop) \norm{\Aop u}_{\lebe^1(\R^2;\R^{2\times2})} \qquad \forall \ u\in\hold^\infty_c(\R^2;\R^2)
	\end{equation}
	and holds true if and only if $\Aop$ is an elliptic and cancelling operator. In particular \eqref{eq:limitigSobo} is fulfilled for $\C$-elliptic differential operators, since $\C$-ellipticity implies both ellipticity and cancellation, cf. \cite{GRV}. Hence, we can apply the arguments from the proof of \cite[Thm 3.3]{Cianchi} to derive similar to \cite[Eq. (6.15)]{Cianchi} the estimate
	\begin{equation}\label{eq:Cianchi(6.15)}
		\norm{\D u}_{\exp\lebe^{\frac{\beta}{\beta+1}}(\Omega;\R^{2\times 2})}\le c(\beta,\Aop,\Omega)\,\left(\norm{u}_{\exp\lebe^\beta(\Omega;\R^{2\times 2})}+\norm{\Aop u}_{\exp\lebe^\beta(\Omega;\R^{2\times 2})}\right)
	\end{equation}
	for every $u\in \exp\L^\beta(\Omega;\R^2)$ with $\Aop u\in\exp\L^\beta(\Omega;\R^{2\times2})$. Thus, applying \eqref{eq:Cianchi(6.15)} to $u-v$ for a arbitrary $v$ from the nullspace of $\Aop$ we conclude in view of the Poincaré inequality \eqref{eq:Poincare2.0}
	\begin{equation}\label{eq:Korn2}
		\inf_{v\in\mathscr{K}}\norm{\D(u-v)}_{\exp\lebe^{\frac{\beta}{\beta+1}}(\Omega;\R^{2\times 2})}\le c(\beta,\Aop,\Omega)\norm{\Aop u}_{\exp\lebe^\beta(\Omega;\R^{2\times 2})},
	\end{equation}
	which is the desired Korn's inequality of the second type in Orlicz spaces.
\end{rem}
\subsection{Proof of the main \Cref{thm:MainTheorem}.} 
\begin{proof}
For the sake of readability we divide the proof into three parts. To this end, we consider $x_0\in\Omega$ and $r>0$ such that $\ball_{2r}(x_0)\Subset \Omega$. 

	\textit{Step 1: Preliminary estimate.}
	As a first step we let $(v_j)_{j\in\N}$ be the Ekeland approximation sequence from \Cref{prop:EkelandSequence} and consider the convex auxiliary function 
	\[
	V_\mu(\xi) \coloneqq (1+\abs{\xi}^2)^{\frac{2-\mu}{4}}\quad\mbox{for} \quad\xi\in\R^{2\times 2}_\A.
	\]
	We recall from \Cref{lem:NonUniformSecEstimates} that $v_j\in \sobo^{2,2}_\loc(\Omega;\R^2)$ and therefore, we can estimate for $k\in\{1,2\}$ the derivatives by 
	\begin{align}\label{eq:BoundDerivativeOfV}
		\begin{split}
			\abs{\del_k V_\mu( \A[\D v_j])}^2 
			&\leq \left(\frac{2-\mu}{2}\right)^2 \big(1+\abs{\A[\D v_j]}^2\big)^{\frac{-2-\mu}{2}} \abs{\A[\D v_j]}^2 \abs{\del_k \A[\D v_j]}^2 \\
			&\leq c(\mu) \frac{\abs{\del_k \A[\D v_j]}^2}{(1+\abs{\A[\D v_j]}^2)^{\frac{\mu}{2}}}.
		\end{split}
	\end{align}
	Furthermore, we observe that \eqref{eq:AreaStrictConvergence}, \eqref{eq:minimisingSequenceWithSmoothBoundary} and \eqref{eq:Ekeland_1} imply  $v_j\to u$ in $\sobo^{-2,1}(\Omega;\R^2)$. Hence, using \eqref{eq:EkelandSequence_Property_1} together with the compact embedding $\sobo^{\Aop,1}(\Omega)\hookrightarrow\lebe^1(\Omega;\R^2)$, cf.~\cite[Theorem 1.1]{GmRa1}, we can extract a non-relabeled subsequence of $(v_j)_{j\in\N}$ such that $v_j\weakstar u$ in  $\BV^\Aop(\Omega)$ as $j\to\infty$. Using \eqref{eq:BoundDerivativeOfV} with \eqref{eq:WeightedSecondOrder_MuElliptic} for $\mu\in(1,2)$, we obtain
	\begin{align}\label{eq:Proof_MainTheorem1}
	\begin{split}
	&\|V_\mu(\A[\D v_j])\|^2_{\sobo^{1,2}(\ball_{r}(x_0))} \\
	& \hspace{1cm}= \|V_\mu(\A[\D v_j])\|^2_{\lebe^2(\ball_{r}(x_0))} + \|\nabla V_\mu(\A[\D v_j])\|^2_{\lebe^2(\ball_{r}(x_0))} \\
	&\hspace{1cm}\leq \int_{\ball_{r}(x_0)} (1+\abs{\A[\D v_j]}^2)^{\frac{1}{2}} \dx{x} + c(\mu) \int_{\ball_{2r}(x_0)}\frac{\abs{\D \A[\D v_j]}^2}{(1+\abs{\A[\D v_j]}^2)^{\frac{\mu}{2}}}\dx{x} \\
	&\hspace{1cm}\leq c \bigg(1+\frac{1}{r^2}\bigg) \int_{\ball_{2r}(x_0)} \abs{\A[\D v_j]}\dx{x} + \frac{c}{r^2} \bigg[\frac{2}{j^2} + \bigg(1+\frac{r^2}{j}+\frac{r^3}{j}\bigg)\int_{\ball_{2r}(x_0)} \frac{\abs{v_j}}{r}\dx{x}\bigg], 
	\end{split}
	\end{align}
	with a constant $c=c(\Lambda,\A,\mu)>0$. Therefore, the sequence $(V_\mu(\A[\D v_j]))_{j\in\N}$ is uniformly bounded in $\sobo^{1,2}(\ball_r(x_0))$.
	Now, the Trudinger theorem in two dimensions, cf.~\cite{Trudinger}, gives the embedding $\sobo^{1,2}(\ball_r(x_0))\hookrightarrow\exp \lebe^2(\ball_{r}(x_0)$ and hence
	\begin{align}\label{eq:ExponentialBoundV_Function_C_Elliptic}
	 \|V_\mu(\A[\D v_j])\|_{\exp\lebe^2(\ball_{r}(x_0))}
	 &\leq
	 c(r) \|V_\mu(\A[\D v_j])\|_{\sobo^{1,2}(\ball_r(x_0))}.
	\end{align}
	As a next step, we utilise $(V_\mu(\xi))^2\geq \abs{\xi}^{2-\mu}$ to conclude the estimate 
	\begin{align}\label{eq:OrliczEstimate_C_Elliptic}
		\| \A[\D v_j]\|_{\exp\lebe^{2-\mu}(\ball_{r}(x_0);\R^{2\times 2}_\A)}\leq \|V_\mu(\A[\D v_j])\|_{\exp \lebe^2(\ball_{r}(x_0))}^\frac{2}{2-\mu},
	\end{align}
	which can be seen as follows: 
	Setting $\lambda_j\coloneqq \|V_\mu(\A[D v_j])\|^{\frac{2}{2-\mu}}_{\exp\lebe^2(\ball_r(x_0))}$ and using the definition of the Luxemburg norm, yields the chain of inequalities
	\begin{align*}
		1\geq \int_{\ball_r(x_0)} \exp\left(\frac{(V_\mu(\A[\D v_j]))^2}{\|V_\mu(\A[\D v_j])\|^2_{\exp\lebe^2(\ball_r(x_0))}}\right)\dx{x}\geq \int_{\ball_r(x_0)} \exp\left(\frac{\abs{\A [\D v_j]}^{2-\mu}}{\lambda_j^{2-\mu}}\right)\dx{x}, 
	\end{align*}
implying $\|\A[\D  v_j]\|_{\exp\lebe^{2-\mu}(\ball_{r}(x_0))}\leq \lambda_j$ and hence, the claimed estimate \eqref{eq:OrliczEstimate_C_Elliptic} is proved. Recalling $v_j\weakstar u$ in $\BV^\Aop(\Omega)$ toegether with \eqref{eq:ExponentialBoundV_Function_C_Elliptic} and \eqref{eq:OrliczEstimate_C_Elliptic}, allows to pass to the limit in \eqref{eq:Proof_MainTheorem1} which yields
\begin{align}\label{eq:Liminf_MainTheorem}
	\begin{split}
	&\liminf_{j\to\infty} \|\A[\D v_j]\|_{\exp\L^{2-\mu}(\ball_r(x_0);\R^{2\times 2}_\A)} \\
	&\hspace{1cm}\leq c(\Lambda,\A,\mu) \bigg[\bigg(1+\frac{1}{r^2}\bigg) \abs{\Aop u}(\overline{\ball}_{2r}(x_0))+ \dashint_{\ball_{2r}(x_0)} \frac{\abs{u}}{r}\dx{x} \bigg]^{\frac{1}{2-\mu}}\eqqcolon \mathfrak{m}<\infty. 
	\end{split}
\end{align}


Since $\Aop$ is a linear differential operator and $\mathfrak{m}>0$ we can replace $v_j$ by $\tilde{v}_j=\frac{v_j}{\mathfrak{m}}$ and consequently, we have  $\|\Aop \tilde{v}_j\|_{\exp\lebe^{2-\mu}(\ball_{r}(x_0))}\leq 1$.  Applying \cite[Lemma 8.8]{BennetSharpley} leads then to 
\begin{align}\label{eq:IntegralBound_C_Elliptic}
	\begin{split}
	\int_{\ball_r(x_0)} \exp(\abs{\A [\D\tilde{v}_j]}^{2-\mu})\dx{x} &= \int_{\ball_r(x_0)} \exp\left(\frac{\abs{\A [\D v_j]}^{2-\mu}}{\mathfrak{m}^{2-\mu}}\right)\dx{x}\\
	&\leq \|\A[\D \tilde{v}_j]\|_{\exp\lebe^{2-\mu}(\ball_{r}(x_0);\R^{2\times 2}_\A)}. 
	\end{split}
\end{align}

	\textit{Step 2: Exponential integrability of $\Aop u$.}
	We aim to use the Reshetnyak lower semi-continuity \Cref{thm:Reshetnyak} to conclude that $\Aop^s u\equiv 0$ in $\ball_{r}(x_0)$. To this end, we introduce the function 
	\begin{equation*}
		\Phi\colon\R^{2\times 2}_\A\to\R, \quad \Phi(z) = \exp\left(\frac{\abs{z}^{2-\mu}}{\mathfrak{m}^{2-\mu}}\right),
	\end{equation*}
	and observe that the corresponding recession function is given by
	\begin{align*}
		\Phi^\infty(z) \coloneqq \lim_{t\to\infty} \frac{\Phi(tz)}{t} 
		= \lim_{t\to\infty} \frac1t\exp\left(\frac{(t\abs{z})^{2-\mu}}{\mathfrak{m}^{2-\mu}}\right)
		=
		\begin{cases}
			0\quad&\mbox{if}\quad \abs{z}=0\\
			+\infty\quad&\mbox{if}\quad\abs{z}>0.
		\end{cases}
	\end{align*}
	Applying Reshetnyak's \Cref{thm:Reshetnyak} in combination with \eqref{eq:IntegralBound_C_Elliptic} we observe
	\begin{align*}
		\int_{\ball_{r}(x_0)} \Phi(\A [\D u])\dx{x} + \int_{\ball_r(x_0)} \Phi^\infty\left(\frac{\dx{\Aop^s u}}{\dx{\abs{\Aop^s u}}}\right) \dx{\abs{\Aop^s u}} &= \Phi(\Aop u)(\ball_r(x_0))\\
		&\leq \liminf_{j\to\infty} \Phi(\Aop v_j)(\ball_r(x_0))\leq 1,
	\end{align*}
	where for the last inequality we have used \eqref{eq:Liminf_MainTheorem} and \eqref{eq:IntegralBound_C_Elliptic}. 
	Hence, by the definition of the recession function we must have $\Aop^s u\equiv 0$ on $\ball_r(x_0)$ and 
	\begin{align*}
		\int_{\ball_r(x_0)}\Phi(\A[\D u] )\dx{x}=\int_{\ball_r(x_0)} \exp\left(\frac{\abs{\A[\D u]}^{2-\mu}}{\mathfrak{m}^{2-\mu}}\right)\dx{x}\leq 1.  
	\end{align*}
	Recalling once more the definition of the Luxemburg norm, we infer $\A[\D u] \in\exp\lebe^{2-\mu}(\ball_r(x_0);\R^{2\times 2}_\A)$ with estimate $\|\A[\D u]\|_{\exp\lebe^{2-\mu}(\ball_r(x_0);\R^{2\times 2}_\A)}\leq \mathfrak{m}$, i.e
	\begin{equation}
		\|\A[\D v_j]\|_{\exp\L^{2-\mu}(\ball_r(x_0);\R^{2\times 2}_{\A})} \leq c \bigg[\bigg(1+\frac{1}{r^2}\bigg) \abs{\Aop u}(\overline{\ball}_{2r}(x_0))+ \dashint_{\ball_{2r}(x_0)}\frac{\abs{u}}{r}\dx{x}\bigg]^{\frac{1}{2-\mu}},
	\end{equation}
	for a constant $c=c(\Lambda,\A,\mu)>0$.

	\textit{Step 3: Exponential integrability of \,$\D u$.} Let us remember that $\mathscr{K}$ is finite dimensional, cf.~\Cref{cor:kernel}, and therefore, all norms are equivalent. In particular, there holds
	\begin{equation*}
		\|v\|_{\exp\lebe^{\frac{2-\mu}{3-\mu}}(\ball_r(x_0);\R^2)} + r\,\|\D v\|_{\exp\lebe^{\frac{2-\mu}{3-\mu}}(\ball_r(x_0);\R^{2\times 2}_{\A})}\leq c(\mu)~~\dashint_{\ball_{r}(x_0)}\abs{v}\dx{x}
	\end{equation*}
	for all $v\in\mathscr{K}$. Moreover, there exists a bounded linear projection operator $\pi\colon \lebe^1(\ball_r(x_0);\R^2)\to \mathscr{K}$ with $v\mapsto \pi_v$ such that 
	\begin{equation*}
		\dashint_{\ball_{r}(x_0)} \abs{\pi_v}\dx{x} \leq c(r)~\dashint_{\ball_{r}(x_0)}\abs{v}\dx{x},
	\end{equation*}
	for all $v\in\lebe^1(\ball_{r}(x_0);\R^2)$, cf.~\cite[Appendix 9.1]{Gmeineder}. 
	To obtain the precise statement of \eqref{eq:FinalEstimateMainTheorem} we use the Korn-type inequality from \eqref{eq:Korn2} with $\beta\coloneqq 2-\mu>0$ and noticing $\tfrac{\beta}{\beta+1}= \tfrac{2-\mu}{3-\mu}$ gives
	\begin{align*}
		&\|\D u\|_{\exp\lebe^{\frac{2-\mu}{3-\mu}}(\ball_{r}(x_0);\R^{2\times 2})}\\
		&\hspace{1cm}\leq \|\D(u-\pi_u)\|_{\exp\lebe^{\frac{2-\mu}{3-\mu}}(\ball_{r}(x_0);\R^{2\times 2})} 
		+
		\|\D \pi_u\|_{\exp\lebe^{\frac{2-\mu}{3-\mu}}(\ball_{r}(x_0);\R^{2\times 2})}\notag\\
		&\hspace{1cm}\leq c\bigg(\left(1+\frac1r\right)\|\Aop u\|_{\exp\lebe^{2-\mu}(\ball_{2r}(x_0);\R^{2\times 2}_\A)} +  \dashint_{\ball_{r}(x_0)}\frac{\abs{u}}{r}\dx{x} \bigg)\\
		&\hspace{1cm}\leq c \left(\bigg(1+\frac{1}{r^2}\bigg)\bigg[\bigg(1+\frac{1}{r^2}\bigg) \abs{\Aop u}(\overline{\ball}_{2r}(x_0)) + \dashint_{\ball_{2r}(x_0)} \frac{\abs{u}}{r} \dx{x}\bigg]^{\frac{1}{2-\mu}} + \dashint_{\ball_{r}(x_0)} \frac{\abs{u}}{r}\dx{x}\right),
	\end{align*}
	with $c = c(\Lambda,\mu,\Aop)>0$.  This finishes the proof.

\end{proof}
\begin{rem}
 In this short note, we have investigated into the regularity of $\BV^\Aop$-minimisers in two dimensions. Here, the crucial idea was to rely on the one-dimensional (almost) complementary part of the part maps that induce $\C$-elliptic, first-order differential operators. Furthermore, we have shown that this strategy is also applicable for the deviatoric gradient in all dimensions. The case of the symmetric gradient in all dimensions was treated in \cite{BEG}. However, the case of a general $\C$-elliptic operator in any dimensions deserves a new approach. In particular, it remains an open problem to treat the trace-free symmetric gradient in three dimensions.
\end{rem}

\section*{Acknowledgments}
The authors are very grateful to Lisa Beck and Franz Gmeineder for inspiring discussions and valuable comments. The first author is thankful for financial support by the Karlsruhe Institute of Technology for a visit in November 2024. The results and discussions presented in this paper are part of the first author’s PhD thesis written under the supervision of Lisa Beck and Franz Gmeineder. The initial idea for this joint project originated at the \textit{Trends in Analysis} conference organised by the second author in 2023 at the University of Duisburg-Essen.

\end{document}